\newtheorem{theo}{Theorem}[section]
\newtheorem{lem}[theo]{Lemma}
\newtheorem{stat}[theo]{Proposition}
\newtheorem{cons}[theo]{Corollary}
\theoremstyle{definition}
\theoremstyle{remark}
\newtheorem{rem}[theo]{Remark}
\newtheorem*{rem*}{Remark}
\newtheorem{que}[theo]{Question}
\let\ge\geqslant
\let\le\leqslant
\let\emptyset\varnothing
\let\ES\varnothing
\let\eps\varepsilon
\let\phi\varphi
\let\kappa\varkappa
\let\hra\hookrightarrow
\newcommand{\Cl}{\mathop{\mathrm{Cl}}}
\newcommand{\supp}{\mathop{\mathrm{supp}}}
\newcommand{\sub}{\mathop{\mathrm{sub}}}
\newcommand{\pr}{\mathop{\mathrm{pr}}}
\newcommand{\expl}{\mathop{\mathrm{exp}_{\,l}}}
\newcommand{\expu}{\mathop{\mathrm{exp}_{\,u}}}
\newcommand{\expus}{\mathop{\mathrm{exp}_{\,us}}}
\newcommand{\uni}[1]{\mathbf{1}_{{#1}}}
\newcommand{\Ob}{\mathop{\mathrm{Ob}}}
\newcommand{\Comp}{\mathcal{C}\mathrm{omp}}
\newcommand{\Tych}{\mathcal{T}\mathrm{ych}}
\newcommand\subcl{\mathrel{\underset{\mathrm{cl}}{\subset}}}
\newcommand\subop{\mathrel{\underset{\mathrm{op}}{\subset}}}
\newcommand{\jint}{\sideset{}{^\lor}\int\limits}
\newcommand{\BBN}{\mathbb{N}}
\newcommand{\BBR}{\mathbb{R}}
\newcommand{\BBF}{\mathbb{F}}
\newcommand{\BBG}{\mathbb{G}}
\newcommand{\BBM}{\mathbb{M}}
\newcommand{\BBP}{\mathbb{P}}
\newcommand{\CCA}{\mathcal{A}}
\newcommand{\CCB}{\mathcal{B}}
\newcommand{\CCC}{\mathcal{C}}
\newcommand{\CCF}{\mathcal{F}}
\newcommand{\CCG}{\mathcal{G}}
\newcommand{\CCH}{\mathcal{H}}
\newcommand{\CCI}{\mathcal{I}}
\newcommand{\CCU}{\mathcal{U}}
\newcommand{\CCV}{\mathcal{V}}
\begin{document}

\begin{frontmatter}



\title{Inclusion hyperspaces and capacities on Tychonoff spaces: functors
and monads\tnoteref{t1}}

\tnotetext[t1]{This research was supported by the Slovenian Research
Agency grants P1-0292-0101, J1-9643-0101 and BI-UA/09-10-002, and
by the Ministry of Science and Education of Ukraine project
M/113-2009.}


\author[prec]{Oleh Nykyforchyn\corref{cor}}
\ead{oleh.nyk@gmail.com}

\cortext[cor]{Corresponding author}

\author[ul]{Du\v san Repov\v s}
\ead{dusan.repovs@guest.arnes.si}

\address[prec]{Department of Mathematics and Computer Science,
Vasyl' Stefanyk Precarpathian National University, Shevchenka 57,
Ivano-Frankivsk, Ukraine, 76025}

\address[ul]{Faculty of Mathematics and Physics, and Faculty of Education,
University of Ljubljana, P.O. Box 2964, Ljubljana, Slovenia, 1001}

\begin{abstract}
The inclusion hyperspace functor, the capacity functor and monads
for these functors have been extended from the category of compact
Hausdorff spaces to the category of Tychonoff spaces. Properties of
spaces and maps of inclusion hyperspaces and capacities
(non-additive measures) on Tychonoff spaces are investigated.
\end{abstract}

\begin{keyword}
Inclusion hyperspace \sep capacity \sep functor \sep monad (triple)
\sep Tychonoff space
\MSC 18B30
\end{keyword}

\end{frontmatter}

\section*{Introduction}

The category of compact Hausdorff topological spaces is probably
the most convenient topological category for a categorical
topologist. A situation is usual when some results are first
obtained for compacta and then extended with much effort to a wider
class of spaces and maps, see e.g. factorization theorems for
inverse limits~\cite{Sh:FuncUncPowComp:81}. Many classical
construction on topological spaces lead to covariant functors in
the category of compacta, and categorical methods proved to be
efficient tools to study hyperspaces, spaces of measures, symmetric
products etc~\cite{ZarFed:CovFuncTopCat:90}. We can mention the
hyperspace functor $\exp$~\cite{TZ:CTCHS:99}, the inclusion
hyperspace functor $G$~\cite{Mo:InclHyp:88}, the probability
measure functor $P$~\cite{Fed:FuncProbMeasTopCat:98}, and the
capacity functor $M$ which was recently introduced by Zarichnyi and
Nykyforchyn \cite{ZN:CapFun:08} to study non-additive regular
measures on compacta.

Functors $\exp$, $P$, $G$, $M$ have rather good properties. The
functors $\exp$ and $P$ belong to a defined by \v S\v cepin class
of normal functors, while $G$ and $M$ satisfy all requirements of
normality but preservation of preimages, hence are only weakly
normal. They are functorial parts of
monads~\cite{TZ:CTCHS:99,ZN:CapFun:08}.

Unfortunately the functors $\exp$ and $G$ lose most of their nice
properties when they are extended from the category of compacta to
the category of Tychonoff spaces. Moreover, a~meaningful extension
usually is not unique. An interested reader is referred, e.g. to
\cite{Ban:TopSpProbMeas-I:95}, where \emph{four} extensions to the
category of Tychonoff spaces of the probability measure functor $P$
are discussed, and two of them are investigated in detail.

The aim of this paper is extend the inclusion hyperspace functor,
the capacity functor and monads for these functors from the
category of compacta to the category of Tychonoff spaces, and to
study properties of these extensions. We will use ``fine tuning''
of standard definitions of hyperspaces and inclusion hyperspaces to
``save'' as much topological and categorical properties valid for
the compact case as possible.

\section{Preliminaries}

In the sequel a \emph{compactum} is a compact Hausdorff topological
space. The~\emph{unit segment} $I=[0;1]$ is considered as a
subspace of the real line $\BBR$ with the natural topology. We say
that a function $\phi:X\to I$ \emph{separates} subsets $A,B\subset
X$ if $\phi|_A\equiv 1$, $\phi|_B\equiv 0$. If such $\phi$ exists
for $A$ and $B$ and is continuous, then we call these sets
\emph{completely separated}. We write $A\subop X$ or $A\subcl X$
if $A$ is respectively an open or a closed subset of a space $X$.
The set of all continuous functions from a space $X$ to a space $Y$
is denoted by $C(X,Y)$.

See \cite{ML:CWM:98} for definitions of category, functor, natural
transformation, monad (triple), morphism of monads. For a category
$\CCC$ we denote the class of its objects by $\Ob\CCC$. The
{category of Tychonoff spaces} $\Tych$ consists of all Tychonoff (=
completely regular) spaces and continuous maps between them. The
\emph{category of compacta} $\Comp$ is a full subcategory of
$\Tych$ and contains all compacta and their continuous maps. We say
that a~functor $F_1$ in $\Tych$ or in $\Comp$ is a
\emph{subfunctor} of a~functor $F_2$ in the same category if there is
a natural transformation $F_1\to F_2$ with all components being
embeddings. Similarly a~monad $\BBF_1$ is a \emph{submonad} of
a~monad $\BBF_2$ if there is a morphism of monads $\BBF_1\to\BBF_2$
such that all its components are embeddings.

>From now on we denote the set of all nonempty closed subsets of a
topological space $X$ by $\exp X$, though sometimes this notation
is used for the set of all \emph{compact} non-empty subsets, and
the two meaning can even coexist in one text~\cite{FF:GenTop:88}. A
lot of topologies on $\exp X$ can be found in literature. The
\emph{upper topology} $\tau_u$ is generated by the base which
consist of all sets $\{F\in \exp X\mid F\subset U\}$, where $U$ is
open in $X$. The
\emph{lower topology} $\tau_l$ has the subbase $\{\{F\in\exp X\mid
F\cap X\ne\ES\}\mid U\subop X\}$. The \emph{Vietoris topology}
$\tau_v$ is the least topology that contains both the upper and the
lower topologies. It is \emph{de facto} the default topology on
$\exp X$, to the great extent due to an important fact that, for a
compact Hausdorff space $X$, the space $\exp X$ with the Vietoris
topology is compact and Hausdorff. It $f:X\to Y$ is a continuous
map of compacta, then the map $\exp f:\exp X\to
\exp Y$, which sends each non-empty closed subset $F$ of $X$ to its
image $f(Y)$, is continuous. Thus we obtain the~\emph{hyperspace
functor} $\exp:\Comp\to \Comp$.

A non-empty closed with respect to the Vietoris topology subset
$\CCF\subset\exp X$ is called an~\emph{inclusion hyperspace} if
$A\subset B\in \exp X$, $A\in \CCF$ imply $B\in \CCF$. The set $GX$
of all inclusion hyperspaces on the space $X$ is closed in
$\exp^2X$, hence is a compactum with the induced topology if $X$ is
a compactum. This topology can also be determined by a subbase
which consists of all sets of the form
\begin{gather*}
U^+=
\{\CCF\in GX\mid \text{there is }F\in\CCF,
F\subset U\},
\\
U^-=
\{\CCF\in GX\mid F\cap U\ne\ES
\text{ for all }F\in\CCF\},
\end{gather*}
with $U$ open in $X$. If the map $Gf:GX\to GY$ for a continuous map
$f:X\to Y$ of compacta is defined as $Gf(G)=\{B\subcl Y\mid
B\supset f(A)\text{ for some }A\in \CCF\}$, $\CCF\in GX$, then $G$
is the~\emph{inclusion hyperspace functor} in $\Comp$.

We follow a terminology of~\cite{ZN:CapFun:08} and call a function
$c:\exp X\cup\{\emptyset\}\to I$ a \emph{capacity} on a compactum
$X$ if the three following properties hold for all closed subsets
$F$, $G$ of~$X$~:
\begin{enumerate}
\item
$c(\emptyset)=0$, $c(X)=1$;
\item
if $F\subset G$, then $c(F)\le c(G)$ (monotonicity);
\item
if $c(F)<a$, then there exists an open set $U\supset F$ such that
for any $G\subset U$ we have $c(G)<a$ (upper semicontinuity).
\end{enumerate}
The set of all capacities on a compactum $X$ is denoted by $MX$. It
was shown in \cite{ZN:CapFun:08} that a compact Hausdorff topology
is determined on $MX$ with a subbase which consists of all sets of
the form
$$
O_-(F,a)=\{c\in MX\mid c(F)<a\},
$$
where $F\subcl X$, $a\in \BBR$, and
\begin{multline*}
O_+(U,a)=\{c\in MX\mid c(U)>a\}=\\
\{c\in MX\mid \text{there exists a compactum } F\subset U, c(F)>a\},
\end{multline*}
where $U\subop X$, $a\in \BBR$. The same topology can be defined as
weak${}^*$ topology, i.e. the weakest topology on $MX$ such that
for each continuous function $\phi:X\to [0;+\infty)$ the
correspondence which sends each $c\in MX$ to the \emph{Choquet
integral}~\cite{Ch:TheoCap:53} of $\phi$ w.r.t.~$c$
$$
\int_X\phi(x)\,dc(x)=\int_0^{+\infty}c(\{x\in X\mid \phi(x)\ge
a\})\,da
$$
is continuous. If $f:X\to Y$ is a continuous map of compacta, then
the map $Mf:MX\to MY$ is defined as follows~:
$Mf(c)(F)=c(f^{-1}(F))$, for $c\in MX$ and $F\subcl Y$. This map is
continuous, and we obtain the~\emph{capacity functor} $M$ in the
category of compacta.

A monad $\mathbb{F}$ in a category $\CCC$ is a triple
$(F,\eta_F,\mu_F)$, with $F:\CCC\to \CCC$ a functor,
$\eta_F:\uni{\CCC}\to F$ and $\mu_F:F^2\to F$ natural
transformations, such that $\mu_FX\circ \eta_F FX=\mu_FX\circ
F\eta_FX=\uni{FX}$, $\mu_FX\circ F\mu_F X=\mu_FX\circ \mu_F FX$ for
all objects $X$ of~$\CCC$. Then $F,\eta_F,\mu_F$ are called resp.\
the \emph{functorial part}, the \emph{unit} and the
\emph{multiplication} of $\mathbf{F}$. For the~\emph{inclusion
hyperspace monad} $\BBG=(G,\eta_G,\mu_G)$ the components of the
unit and the multiplication are defined by the
formulae~\cite{Rad:MonGalg:90}:
$$
\eta_GX(x)=\{F\in\exp X\mid F\ni x\},\; x\in X,
$$
and
$$
\mu_GX(\mathbf{F})=\{F\in \exp X\mid
F\in\bigcap \mathrm{H}\text{ for some }\mathrm{H}\in \mathbf{F}\},
\mathbf{F}\in G^2X.
$$

In the \emph{capacity monad}
$\BBM=(M,\eta_M,\mu_M)$~\cite{ZN:CapFun:08} the components of the
unit and the multiplication are defined as follows:
$$
\eta_M(x)(F)=
\begin{cases}
1,x\in F,\\
0,x\notin F,
\end{cases}
\;x\in X, F\subcl X,
$$
and
$$
\mu_MX(\CCC)(F)=
\sup\{\alpha\in I\mid
\CCC(\{c\in MX\mid c(F)\ge \alpha\})\ge\alpha
\},
\; \CCC\in M^2, F\subcl X.
$$
An internal relation between the inclusion hyperspace monad and the
capacity monad is presented in \cite{ZN:CapFun:08,Nyk:CAPuniq:0X}.

It is well known that the correspondence which sends each Tychonoff
space $X$ to its Stone-\v Cech compactification $\beta X$ is
naturally extended to a functor $\beta:\Tych\to \Comp$. For a
continuous map $f:X\to Y$ of Tychonoff spaces the map $\beta
f:\beta X\to \beta Y$ is the unique continuous extension of $f$. In
fact this functor is
\emph{left adjoint}~\cite{ML:CWM:98} to the inclusion functor $U$
which embeds $\Comp$ into $\Tych$. The collection
$i=(iX)_{X\in\Ob\Tych}$ of natural embeddings of all Tychonoff
spaces into their Stone-\v Cech compactifications is a unique
natural transformation $\uni{\Tych}\to U\beta$ (a \emph{unit of the
adjunction}, cf.~\cite{ML:CWM:98}).

In this paper ``monotonic'' always means ``isotone''.

\section{Inclusion hyperspace functor and monad in the category
of Tychonoff spaces}

First we modify the Vietoris topology on the set $\exp X$ for a
Tychonoff space $X$. Distinct closed sets in $X$ have distinct
closures in $\beta X$, but the map $e_{\exp} X$ which sends each
$F\in\exp X$ to $\Cl_{\beta X}F\in\exp \beta X$ generally is not an
embedding when the Vietoris topology are considered on the both
spaces, although is continuous. It is easy to prove~:
\begin{lem}
Let $X$ be a Tychonoff space. Then the unique topology on $\exp X$,
such that $e_{\exp} X$ is an embedding into $\exp\beta X$ with the
Vietoris topology, is determined by a base which consists of all
sets of the form
\begin{gather*}
\langle
U_1,\dots,U_k
\rangle
=
\{F\in\exp X\mid
F\text{ is completely separated from }
\\
X\setminus(U_1\cup\dots\cup U_k),
F\cap U_i\ne\ES,i=1,\dots,k\},
\end{gather*}
with all $U_i$ open in $X$.
\end{lem}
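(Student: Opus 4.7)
The plan is to first reduce the statement to a topology question and then verify the claimed base. The map $e_{\exp}X$ is injective because $X$ is Hausdorff: every closed $F\subset X$ satisfies $\Cl_{\beta X} F\cap X=F$. Consequently there is a unique topology on $\exp X$ making $e_{\exp}X$ an embedding — the initial topology pulled back from the Vietoris topology on $\exp\beta X$ — and the lemma reduces to checking that it admits $\{\langle U_1,\dots,U_k\rangle\}$ as a base. The main translation tool will be the classical Tychonoff fact that two closed subsets $F,A\subset X$ are completely separated in $X$ iff $\Cl_{\beta X} F\cap\Cl_{\beta X} A=\emptyset$, together with the density observation that $\Cl_{\beta X} F\cap V\ne\emptyset$ is equivalent to $F\cap(V\cap X)\ne\emptyset$ for every open $V\subset\beta X$.

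To see each $\langle U_1,\dots,U_k\rangle$ is open in the initial topology, I would introduce the open subsets $V^{*}=\beta X\setminus\Cl_{\beta X}(X\setminus\bigcup_i U_i)$ and $V_i^{*}=\beta X\setminus\Cl_{\beta X}(X\setminus U_i)$ of $\beta X$, noting $V_i^{*}\subset V^{*}$ and $V^{*}\cap X=\bigcup_i U_i$. A direct rewriting then yields $\langle U_1,\dots,U_k\rangle=(e_{\exp}X)^{-1}\bigl(\langle V_1^{*},\dots,V_k^{*},V^{*}\rangle_{\beta X}\bigr)$: the separation condition is exactly $\Cl_{\beta X} F\subset V^{*}$ by the separation criterion, and each condition $F\cap U_i\ne\emptyset$ is equivalent to $\Cl_{\beta X} F\cap V_i^{*}\ne\emptyset$ by density.

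For the converse direction, I would take a basic Vietoris open $\langle V_1,\dots,V_k\rangle_{\beta X}\ni\Cl_{\beta X} F_0$ and exploit normality of the compactum $\beta X$ via the standard shrinking lemma, obtaining open $W_i\subset\beta X$ with $\Cl_{\beta X} W_i\subset V_i$ and $\Cl_{\beta X} F_0\in\langle W_1,\dots,W_k\rangle_{\beta X}$; then put $U_i:=W_i\cap X$. The membership $F_0\in\langle U_1,\dots,U_k\rangle$ is routine from the setup, and the intersection conditions in the inclusion $\langle U_1,\dots,U_k\rangle\subset(e_{\exp}X)^{-1}(\langle V_1,\dots,V_k\rangle_{\beta X})$ follow directly from $U_i\subset V_i$. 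The delicate point — and the only genuine obstacle I foresee — is to verify $\Cl_{\beta X} F'\subset\bigcup_i V_i$ for $F'\in\langle U_1,\dots,U_k\rangle$. For this I would establish the auxiliary inclusion $\beta X\setminus\bigcup_i V_i\subset\Cl_{\beta X}(X\setminus\bigcup_i U_i)$ by a short density argument: given $y\in\beta X\setminus\bigcup_i V_i$ and any open neighbourhood $O$ of $y$ in $\beta X$, the open set $O\cap(\beta X\setminus\bigcup_i\Cl_{\beta X} W_i)$ still contains $y$, hence meets $X$, and any such witness lies in $(X\setminus\bigcup_i U_i)\cap O$. Here the \emph{strict} shrinking $\Cl_{\beta X} W_i\subset V_i$ is essential; merely taking $W_i=V_i$ would not suffice, since there is no a priori control over $\Cl_{\beta X}(X\setminus\bigcup_i U_i)$ at points of $\beta X\setminus X$.
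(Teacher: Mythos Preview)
Your proof is correct. The paper does not supply a proof of this lemma (it is introduced only with ``It is easy to prove''), so there is nothing to compare against; your argument via the separation criterion $\Cl_{\beta X}F\cap\Cl_{\beta X}A=\emptyset$ and the shrinking lemma in the normal space $\beta X$ is the natural route, and the delicate inclusion $\beta X\setminus\bigcup_i V_i\subset\Cl_{\beta X}(X\setminus\bigcup_i U_i)$ is handled correctly precisely because you shrank strictly.
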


Observe that our use of the notation $\langle\dots\rangle$ differs
from its traditional meaning~\cite{TZ:CTCHS:99}, but agrees with it
if $X$ is a compactum. Hence this topology coincides with the
Vietoris topology for each compact Hausdorff space $X$, but may be
weaker for noncompact spaces. The topology is not changed when we
take a less base which consists only of $\langle
U_1,\dots,U_k\rangle$ for $U_i\subop X$ such that $U_2\cup\dots\cup
U_k$ is completely separated from $X\setminus U_1$. We can also
equivalently determine our topology with a subbase which consists
of the sets
$$
\langle U\rangle=
\{F\in\exp X\mid F\text{ is completely separated from }X\setminus U\}
$$
and
$$
\langle X,U\rangle=
\{F\in\exp X\mid F\cap U\ne\ES\}
$$
with $U$ running over all open subsets of $X$.

Observe that the sets of the second type form a subbase of the
lower topology $\tau_l$ on $\exp X$, while a subbase which consists
of the sets of the first form determines a topology that is equal
or weaker than the upper topology $\tau_u$ on $\exp X$. We call it
an~\emph{upper separation topology} (not only for Tychonoff spaces)
and denote by $\tau_{us}$. Thus the topology introduced in the
latter lemma is a lowest upper bound of $\tau_l$ and $\tau_{us}$.
>From now on we
\emph{always} consider $\exp X$ with this topology, if otherwise is
not specified. We also denote by $\expl X$, $\expu X$ and $\expus
X$ the set $\exp X$ with the respective topologies.

If $f:X\to Y$ is a continuous map of Tychonoff spaces, then we
define the map $\exp f:\exp X\to \exp Y$ by the formula $\exp
f(F)=\Cl f(F)$. The equality $e_{\exp} Y\circ \exp f=\exp \beta
f\circ e_{\exp}X$ implies that $\exp f$ is continuous, and we
obtain an extension of the functor $\exp$ in $\Comp$ to $\Tych$.
Unfortunately, the extended functor $\exp$ does not preserve
embeddings.

Now we consider how to define ``valid'' inclusion hyperspaces in
Tychonoff spaces.
\begin{lem}
Let a family $\CCF$ of non-empty closed sets of a Tychonoff space
$X$ is such that $A\subset B\subcl X$, $A\in\CCF$ imply $B\in\CCF$.
Then the following properties are equivalent~:

(a) $\CCF$ is a compact set in $\expl X$;

(b) for each monotonically decreasing net $(F_\alpha)$ of elements
of $\CCF$ the intersection $\bigcap\limits_\alpha F_\alpha$ also is
in $\CCF$.

\smallskip
\noindent Each such $\CCF$ is closed in $\expus X$, hence in $\exp X$.
If $X$ is compact, then these conditions are also equivalent to~:

(c) $\CCF$ is an inclusion hyperspace.
\end{lem}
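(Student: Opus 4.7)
The plan is to establish (a) $\Leftrightarrow$ (b) first for arbitrary Tychonoff $X$, deduce the closedness of $\CCF$ in $\expus X$ from a related net construction, and treat the compact case by a short reduction to a classical Vietoris-topology argument.

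For (a) $\Rightarrow$ (b), a decreasing net $(F_\alpha)$ in $\CCF$ has a subnet $(F_{\alpha_\beta})$ converging in $\expl X$ to some $F_0 \in \CCF$ by compactness. I deduce $F_0 \subset F_{\alpha_1}$ for every $\alpha_1$ by contradiction: any $x \in F_0 \setminus F_{\alpha_1}$ places $F_0$ in the subbase neighbourhood $\langle X, X \setminus F_{\alpha_1}\rangle$, into which $(F_{\alpha_\beta})$ must eventually fall, yet decreasingness forces $F_{\alpha_\beta} \subset F_{\alpha_1}$ once $\alpha_\beta \ge \alpha_1$. So $F_0 \subset \bigcap_\alpha F_\alpha$, and upward-closure makes $\bigcap_\alpha F_\alpha$ an element of $\CCF$. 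For (b) $\Rightarrow$ (a), I show that every ultranet $(F_\alpha)$ in $\CCF$ converges in $\CCF$. Set $H_{\alpha_0} := \Cl \bigcup_{\alpha \ge \alpha_0} F_\alpha$, which contains $F_{\alpha_0} \in \CCF$ and hence lies in $\CCF$ by upward-closure; the decreasing family $(H_{\alpha_0})$ intersects to $H \in \CCF$ by (b). Given a subbase neighbourhood $\langle X, U\rangle$ of $H$, any $x \in H \cap U$ lies in every $H_{\alpha_0}$, so the open set $U$ meets $\bigcup_{\alpha \ge \alpha_0} F_\alpha$; this places some $F_\alpha$ with $\alpha \ge \alpha_0$ in $\langle X, U\rangle$. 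For an ultranet, cofinal occurrence in a subbase set is eventual occurrence, and eventual inclusion at every subbase element is convergence, so $F_\alpha \to H$.

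For closedness in $\expus X$, let $F_0 \notin \CCF$; I look for open $U \subop X$ with $F_0 \in \langle U\rangle$ and $\langle U\rangle \cap \CCF = \ES$. Suppose none exists: then for every $U$ in $\CCD := \{U \subop X : F_0 \in \langle U\rangle\}$ one may pick $G_U \in \CCF \cap \langle U\rangle$, so $G_U \subset U$. The identity $\langle U_1\rangle \cap \langle U_2\rangle = \langle U_1 \cap U_2\rangle$ (take the minimum of separating functions) makes $\CCD$ directed under reverse inclusion, so the family $H_U := \Cl \bigcup\{G_{U'} : U' \in \CCD,\ U' \subset U\}$ is a decreasing net in $\CCF$ (each $H_U \supset G_U$). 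By (b), $H := \bigcap_U H_U \in \CCF$. For any $x \notin F_0$, Tychonoff separation supplies $\phi \in C(X,I)$ with $\phi|_{F_0} \equiv 1$ and $\phi(x) = 0$; then $U_x := \phi^{-1}((1/2, 1])$ lies in $\CCD$ (witnessed by $\max(0, 2\phi - 1)$), and $H_{U_x} \subset \Cl U_x \subset \phi^{-1}([1/2, 1])$ excludes $x$. Hence $H \subset F_0$, so upward-closure forces $F_0 \in \CCF$, a contradiction. Since the topology on $\exp X$ is finer than $\tau_{us}$, closedness in $\exp X$ follows.

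For the compact case, a decreasing net of non-empty closed sets in a compact Hausdorff space $\tau_v$-converges to its intersection, so (c) $\Rightarrow$ (b) is immediate, while (b) $\Rightarrow$ (c) applies the same $H_{\alpha_0}$-construction to a $\tau_v$-convergent net $(F_\alpha) \to F$ in $\CCF$ and identifies $H$ with $F$ (one inclusion from lower $\tau_v$-convergence, the other from upper $\tau_v$-convergence together with the normality of the compactum). I expect the main obstacle to be the $\expus X$-closedness step: the index set $\CCD$ must be engineered so that finite intersections remain inside it (keeping $(H_U)$ genuinely decreasing), and Tychonoff separation must be exploited so that the \emph{closure} of $U_x$, not merely $U_x$ itself, avoids $x$, which the $\max(0, 2\phi - 1)$ witness achieves.
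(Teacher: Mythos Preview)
Your proof is correct, but the route differs from the paper's in two notable places.

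For (a)$\Leftrightarrow$(b) the paper works directly with open covers: for (a)$\Rightarrow$(b) it observes that if $\bigcap_\alpha F_\alpha\notin\CCF$ then $\{\langle X,X\setminus F_\alpha\rangle\}$ is a cover of $\CCF$ admitting no finite subcover; for (b)$\Rightarrow$(a) it invokes Alexander's subbase lemma, noting that a subbase cover $\{\langle X,U_\alpha\rangle\}$ without finite subcover forces all finite $X\setminus(U_{\alpha_1}\cup\dots\cup U_{\alpha_k})$ into $\CCF$, whence their intersection lies in $\CCF$ by~(b), a contradiction. Your subnet/ultranet arguments are equally valid and perhaps more in the spirit of net-based topology, but the Alexander route is shorter and avoids the bookkeeping with the auxiliary sets $H_{\alpha_0}$.

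For the $\tau_{us}$-closedness the paper's argument is essentially yours stripped down: instead of building $H_U=\Cl\bigcup\{G_{U'}:U'\subset U\}$, it simply takes the filtered family $\{\Cl U: F_0\in\langle U\rangle\}$ itself (each $\Cl U\in\CCF$ since some $G_U\subset U$ lies in $\CCF$), whose intersection is $F_0$ by the same Tychonoff-separation reasoning you give. Finally, for compact $X$ the paper gets (b)$\Rightarrow$(c) for free: having shown $\CCF$ is $\tau_{us}$-closed, hence closed in $\exp X$, it remarks that for compacta the paper's topology on $\exp X$ \emph{is} the Vietoris topology, so $\CCF$ is Vietoris-closed. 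Your direct verification via the $H_{\alpha_0}$ construction is correct but redundant once the $\tau_{us}$-closedness is in hand.
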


\begin{proof}
Assume (a), and let $(F_\alpha)$ be a monotonically decreasing net
of elements of $\CCF$. If $\bigcap\limits_\alpha F_\alpha\notin
\CCF$, then the collection $\{\langle X,X\setminus F_\alpha\rangle\}$
is an open cover of $\CCF$ that does not contain a finite subcover,
which contradicts the compactness of $\CCF$ in the lower topology.
Thus (a) implies (b).

Let (b) hold, and we have a cover of $\CCF$ by subbase elements
$\langle X,U_\alpha\rangle$, $\alpha\in\CCA$. If there is no finite
subcover, then $\CCF$ contains all sets of the form $X\setminus
(U_{\alpha_1})
\cup\dots\cup U_{\alpha_k}$, $\alpha_1,\dots,\alpha_k\in\CCA$.
These sets form a filtered family, which may be considered as a
monotonically decreasing net of elements of $\CCF$. Hence, by the
assumption, $\CCF$ contains their non-empty intersection
$B=X\setminus\bigcup\limits_{\alpha\in\CCA}U_\alpha$ that does not
intersect any of $U_\alpha$. This contradiction shows that each
open cover of $\CCF$ by subbase elements contains a finite
subcover, and by Alexander Lemma $\CCF$ is compact, i.e. (a) is
valid.

Let $\CCF$ satisfy (b), and let $C$ be a point of closure of $\CCF$
in $\expus X$. Then for each neighborhood $U\supset C$ there is
$F\in\CCF$ such that $F$ is completely separated from $X\setminus
U$, therefore $\Cl U\in\CCF$. The set $\CCU$ of all closures $\Cl
U$, with $U$ a neighborhood of $C$, is filtered. Therefore
$\bigcap\CCU=C\in\CCF$, hence $\CCF$ is closed in $\expus X$. If
$X$ is a compactum, then $\CCF$ satisfies the definition of
inclusion hyperspace, i.e. (c) is true.

It is also obvious that an inclusion hyperspace on a compactum
satisfies (b).
\end{proof}

Therefore we call a collection $\CCF$ of non-empty closed sets of a
Tychonoff space $X$ a~\emph{compact inclusion hyperspace} in $X$ if
$A\subset B\subcl X$, $A\in\CCF$ imply $B\in\CCF$, and $\CCF$ is
compact in the lower topology on $\exp X$. Note that the lower
topology is non-Hausdorff for non-degenerate $X$. The set of all
compact inclusion hyperspaces in $X$ will be denoted by $\check
GX$.

Let $G^*X$ be the set of all inclusion hyperspaces $\CCG$ in $\beta
X$ with the property~: if $A,B\subcl\beta X$, $A\cap X=B\cap X$,
then $A\in\CCG\iff B\in\CCG$. Observe that each such $\CCG$ does
not contain subsets of $\beta X\setminus X$.

The latter lemma implies~:
\begin{stat}
A collection $\CCF\subset \exp X$ is a compact inclusion hyperspace
if and only if it is equal to $\{G\cap X\mid G\in\CCG\}$ for a
unique $\CCG\in G^*X$ .
\end{stat}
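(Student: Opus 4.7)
The strategy is to make the correspondence explicit. Given a compact inclusion hyperspace $\CCF$ in $X$, the natural candidate is
$$
\CCG := \{G \subcl \beta X \mid G \cap X \in \CCF\}.
$$
It is non-empty (since $X \in \CCF$ by upward closure, so $\beta X \in \CCG$), upward closed in $\exp \beta X$, and the defining condition of $G^*X$ is immediate because membership in $\CCG$ depends only on the trace $G \cap X$. The substantive step is to show that $\CCG$ is closed in the Vietoris topology on $\exp \beta X$. Since $\beta X$ is a compactum, the previous lemma allows me to replace topological closedness by the monotone-net condition (b): given a monotonically decreasing net $(G_\alpha) \subset \CCG$, the traces $(G_\alpha \cap X)$ form a monotonically decreasing net in $\CCF$, so $\bigcap_\alpha (G_\alpha \cap X) \in \CCF$ by the same lemma applied to $\CCF$ in $X$. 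Since $\bigcap_\alpha (G_\alpha \cap X) = (\bigcap_\alpha G_\alpha) \cap X$, this gives $\bigcap_\alpha G_\alpha \in \CCG$.

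The equality $\CCF = \{G \cap X \mid G \in \CCG\}$ is then easy: the inclusion $\supset$ is immediate, and each $F \in \CCF$ is realized by $G := \Cl_{\beta X} F$, since $\Cl_{\beta X} F \cap X = F$ when $F$ is closed in $X$. For the backward implication, start from any $\CCG \in G^*X$ and set $\CCF := \{G \cap X \mid G \in \CCG\}$. Its members are non-empty closed subsets of $X$ by the noted property that $\CCG$ contains no subsets of $\beta X \setminus X$. Upward closure is obtained by a lifting argument: given $A \subset B \subcl X$ with $A = G \cap X$ for some $G \in \CCG$, the closed set $G' := G \cup \Cl_{\beta X} B$ lies in $\CCG$ by upward closure in $\exp \beta X$ and satisfies $G' \cap X = B$. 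Condition (b) of the previous lemma is verified by the same lifting trick: a monotonically decreasing net $(F_\alpha) \subset \CCF$ is lifted via $G_\alpha := \Cl_{\beta X} F_\alpha$, which is in $\CCG$ by the $G^*$ property, so $\bigcap_\alpha F_\alpha = (\bigcap_\alpha G_\alpha) \cap X$ belongs to $\CCF$; non-emptiness holds because $\bigcap_\alpha G_\alpha \in \CCG$ cannot be contained in $\beta X \setminus X$.

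Finally, uniqueness follows from a short argument using the $G^*$ property: if $\CCG_1, \CCG_2 \in G^*X$ yield the same $\CCF$, then every $G \in \CCG_1$ satisfies $G \cap X = G' \cap X$ for some $G' \in \CCG_2$, and the defining condition of $\CCG_2$ then forces $G \in \CCG_2$, so $\CCG_1 \subset \CCG_2$ and by symmetry $\CCG_1 = \CCG_2$. The principal obstacle in the entire argument is the topological closedness of $\CCG$ in $\exp \beta X$; once rephrased through the equivalent monotone-net condition of the previous lemma, this reduces to the elementary identity $\bigcap_\alpha (G_\alpha \cap X) = (\bigcap_\alpha G_\alpha) \cap X$, which is what makes the whole bijection work so cleanly.
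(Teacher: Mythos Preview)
Your proof is correct and follows exactly the route the paper intends: the paper states only that ``the latter lemma implies'' the proposition and then records the explicit formula $e_GX(\CCF)=\{G\in\exp\beta X\mid G\cap X\in\CCF\}$, which is precisely your candidate $\CCG$. You have simply supplied the omitted details, using Lemma~2.2 in both directions (the equivalence (b)$\iff$(c) on the compactum $\beta X$ to verify that $\CCG$ is an inclusion hyperspace, and (a)$\iff$(b) on $X$ to verify that $\CCF$ is a compact inclusion hyperspace).
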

We denote the map $\check GX\to G\beta X$ which sends each
$\CCF\in\check GX$ to the respective $\CCG$ by $e_GX$. It is easy
to see that $e_GX(\CCF)$ is equal to $\{G\in\exp \beta X\mid G\cap
X\in\CCF\}$.

We define a Tychonoff topology on $\check GX$ by the requirement
that $e_GX$ is an embedding into $G\beta X$. An obvious inclusion
$G\beta f(G^*X)\subset G^*Y$ for a continuous map $f:X\to Y$ allows
to define a continuous map $\check Gf:\check GX\to\check GY$ as a
restriction of the map $G\beta f$, i.e. by the equality $G\beta
f\circ e_GX=e_GY\circ\check Gf$. Of course, $\check
Gf(\CCF)=\{G\subcl Y\mid G\supset f(F)\text{ for some } F\in\CCF\}$
for $\CCF\in\check GX$. A functor $\check G$ in the category of
Tychonoff spaces is obtained. Its definition implies that
$e_G=(e_GX)_{X\in\Ob\Tych}$ is a~natural transformation $\check
G\to UG\beta$, with all components being embeddings, therefore
$\check G$ is a subfunctor of $UG\beta$. Note also that
$e_GX=\check GiX$ for all Tychonoff spaces $X$.

Due to the form of the standard subbase of $G\beta X$, we obtain~:
\begin{stat}
The topology on $\check GX$ can be determined by a subbase which
consists of all sets of the form
\begin{gather*}
U^+=
\{\CCF\in\check GX\mid \text{there is }F\in\CCF,
F\text{ is completely separated from }X\setminus U\},
\\
U^-=
\{\CCF\in\check GX\mid F\cap U\ne\ES
\text{ for all }F\in\CCF\},
\end{gather*}
with $U$ open in $X$.
\end{stat}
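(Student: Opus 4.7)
The plan is to show that the topology on $\check GX$, defined as the pullback of the topology of $G\beta X$ under the embedding $e_GX$, is generated by the sets $U^+$ and $U^-$ as $U$ ranges over open subsets of $X$. Recall the standard subbase of $G\beta X$ consists of sets $W^+=\{\CCG\in G\beta X\mid C\subset W\text{ for some }C\in\CCG\}$ and $W^-=\{\CCG\in G\beta X\mid C\cap W\ne\ES\text{ for every }C\in\CCG\}$ with $W\subop\beta X$, and that the preceding proposition identifies $e_GX(\CCF)=\{G\in\exp\beta X\mid G\cap X\in\CCF\}$. A key observation used throughout is that for every $F\in\CCF$ we have $\Cl_{\beta X}F\in e_GX(\CCF)$ and $(\Cl_{\beta X}F)\cap X=F$.

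First I would check that each $U^{\pm}$ is open in the pullback topology. For $U\subop X$ put $W_U=\beta X\setminus\Cl_{\beta X}(X\setminus U)$, which is open in $\beta X$. The equality $U^+=e_GX^{-1}(W_U^+)$ rests on the equivalence ``$F\in\CCF$ is completely separated from $X\setminus U$ in $X$'' iff, by extending the separating function to $\beta X$, ``$\Cl_{\beta X}F$ is disjoint from $\Cl_{\beta X}(X\setminus U)$'', iff ``$\Cl_{\beta X}F\subset W_U$''. The forward direction witnesses $e_GX(\CCF)\in W_U^+$ by $\Cl_{\beta X}F$; the converse, starting from $G\in e_GX(\CCF)$ with $G\subset W_U$, uses normality of $\beta X$ to separate $G\supset G\cap X=:F\in\CCF$ from $\Cl_{\beta X}(X\setminus U)$ and restricts the separating function to $X$. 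Similarly $U^-=e_GX^{-1}(W^-)$ for any $W\subop\beta X$ with $W\cap X=U$: if every $F\in\CCF$ meets $U$, then every $G\in e_GX(\CCF)$ contains some such $F$ and so meets $W\supset U$; conversely, if every $G$ meets $W$, then for each $F\in\CCF$ the set $\Cl_{\beta X}F\in e_GX(\CCF)$ meets the open $W$, whence by density of $F$ in $\Cl_{\beta X}F$ we get $F\cap U=F\cap W\ne\ES$.

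Next I would verify the opposite inclusion: every pullback $e_GX^{-1}(W^{\pm})$ of a standard subbase element of $G\beta X$ is open in the topology generated by the $U^{\pm}$. For $W^-$ the previous calculation already gives $e_GX^{-1}(W^-)=(W\cap X)^-$. The nontrivial case is $W^+$. Given $\CCF\in e_GX^{-1}(W^+)$ with a witness $G\in e_GX(\CCF)$ lying in $W$, invoke normality of the compactum $\beta X$ to choose $\phi\colon\beta X\to I$ with $\phi|_G\equiv 1$ and $\phi|_{\beta X\setminus W}\equiv 0$, and put $U=X\cap\phi^{-1}((1/2,1])$, which is open in $X$. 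Then $F:=G\cap X\in\CCF$ is separated from $X\setminus U$ by $\max\{0,2\phi|_X-1\}$, so $\CCF\in U^+$. Conversely, any $\CCF'\in U^+$ with witness $F''$ separated from $X\setminus U$ satisfies $F''\subset\phi^{-1}((1/2,1])$, hence $\Cl_{\beta X}F''\subset\phi^{-1}([1/2,1])\subset W$; since $\Cl_{\beta X}F''\in e_GX(\CCF')$ this places $\CCF'$ in $e_GX^{-1}(W^+)$. Thus $\CCF\in U^+\subset e_GX^{-1}(W^+)$, exhibiting $e_GX^{-1}(W^+)$ as a union of sets $U^+$.

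The main obstacle is this last step. The naive choice $U=W\cap X$ fails because $W_U=\beta X\setminus\Cl_{\beta X}(X\setminus U)$ is in fact the \emph{largest} open set in $\beta X$ whose trace on $X$ equals $U$, so it typically overshoots $W$; one needs a genuinely smaller $U$, produced by Urysohn shrinkage inside $\beta X$ and the rescaling $\max\{0,2\phi-1\}$. The same device drives the converse direction in the first step.
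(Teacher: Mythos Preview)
Your proof is correct and follows precisely the route the paper indicates: the paper offers no detailed argument, stating only that the proposition is obtained ``due to the form of the standard subbase of $G\beta X$,'' and your proposal carefully fills in exactly those details by matching $U^{\pm}$ with preimages of $W^{\pm}$ under $e_GX$. The Urysohn shrinkage you use for the $W^+$ direction is the natural way to bridge the gap the paper leaves implicit.
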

Observe that this interpretation of $U^+,U^-$ for Tychonoff spaces
agrees with the standard one for compact Hausdorff spaces.

As it was said before, the functor $\exp:\Tych\to\Tych$ does not
preserve embeddings, thus we cannot regard $\exp\exp X$ as a
subspace of $\exp\exp\beta X$, although $\exp X$ is a subspace of
$\exp\beta X$. We can only say that image under $\exp$ of the
embedding $\exp X\to\exp\beta X$ is continuous. Therefore a
straightforward attempt to embed $\check GX$ into $\exp^2X$ fails,
while $\check GX$ is embedded into $\exp^2\beta X$.

Now we will show that the topology on $\check GX$ is the weak
topology with respect to a collection of maps into the unit
interval.

\begin{lem}\label{lm.sup-cont}
Let a map $\phi: X\to I$ be continuous. Then the map $\psi:\exp
X\to I$ which sends each non-empty closed subset $F\subset X$ to
$\sup\limits_{x\in F}\phi(x)$ (or $\inf\limits_{x\in F}\phi(x)$) is
continuous.
\end{lem}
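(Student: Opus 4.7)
The plan is to reduce continuity of $\psi$ to the already-known compact case by exploiting the embedding $e_{\exp}X:\exp X\to\exp\beta X$ used to define the topology on $\exp X$. Let $\tilde\phi:\beta X\to I$ denote the unique continuous extension of $\phi$, and let $\tilde\psi:\exp\beta X\to I$ send $G\in\exp\beta X$ to $\sup_{x\in G}\tilde\phi(x)$ (respectively $\inf$). On the compactum $\exp\beta X$ with the Vietoris topology, $\tilde\psi$ is continuous by the standard fact for compacta, which is routinely verified from the Vietoris subbase $\{G\mid G\subset V\}$ and $\{G\mid G\cap V\ne\ES\}$ with $V\subop\beta X$.

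The substantive step is the factorization $\psi=\tilde\psi\circ e_{\exp}X$. For any $F\in\exp X$ one has $e_{\exp}X(F)=\Cl_{\beta X}F$. Since $\tilde\phi$ is continuous and $F$ is dense in $\Cl_{\beta X}F$, the set $\tilde\phi(\Cl_{\beta X}F)$ is compact, contains $\phi(F)$, and lies inside $\Cl_I(\phi(F))$, so it equals $\Cl_I(\phi(F))$. Closure in $I$ preserves supremum and infimum of a bounded set, so $\tilde\psi(\Cl_{\beta X}F)=\psi(F)$, and continuity of $\psi$ follows as the composition of the embedding $e_{\exp}X$ with the continuous $\tilde\psi$.

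Alternatively, one could argue directly from the subbase of $\exp X$. For the sup case $\psi^{-1}((a,1])$ coincides with the subbasic open set $\langle X,\phi^{-1}((a,1])\rangle$. For $\psi^{-1}([0,a))$, given $\psi(F)<a$, pick $b,c$ with $\psi(F)<b<c<a$ and put $U=\phi^{-1}([0,c))$; the clipped function $g(x)=1-\min(1,\max(0,(\phi(x)-b)/(c-b)))$ is continuous, equals $1$ on $F$ and $0$ on $X\setminus U=\phi^{-1}([c,1])$, so $F\in\langle U\rangle\subset\psi^{-1}([0,a))$. The inf case is dual, with $\psi^{-1}([0,a))$ subbasic and $\psi^{-1}((a,1])$ handled by analogous clipping on $U=\phi^{-1}((a',1])$ for a suitable $a'>a$.

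The main obstacle in both routes is the asymmetry introduced by the upper separation topology $\tau_{us}$: mere inclusion $F\subset U$ does not by itself furnish a neighborhood of $F$ in $\exp X$; complete separation of $F$ from $X\setminus U$ is required. The assumption that $\phi$ takes values in $I$ is used precisely to build, by clipping $\phi$ itself, such a continuous separating function; without this the passage from ``small values of $\phi$ on $F$'' to ``$F$ lies in $\langle U\rangle$'' would fail.
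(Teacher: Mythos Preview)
Your proposal is correct. The second, direct route is essentially the paper's own proof: the paper shows $\psi^{-1}((\alpha,1])=\langle X,\phi^{-1}((\alpha,1])\rangle$ and, for the other half, chooses $U=\phi^{-1}([0,\tfrac{\alpha+\beta}{2}))$ and observes that $F$ is completely separated from $X\setminus U$ (with a rescaling of $\phi$ as the implicit separating function), exactly as your clipped $g$ does explicitly.

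Your first route, factoring through $e_{\exp}X$ and the Stone--\v Cech extension $\tilde\phi$, is a genuinely different argument. It trades the hands-on verification against the subbase for the identity $\psi=\tilde\psi\circ e_{\exp}X$, which you justify via $\tilde\phi(\Cl_{\beta X}F)=\Cl_I(\phi(F))$. This buys modularity: once the compact case $\tilde\psi\in C(\exp\beta X,I)$ is granted, the non-compact case is a one-line corollary of Lemma~2.1, and the complete-separation issue with $\tau_{us}$ never has to be confronted directly. The paper's approach, by contrast, stays intrinsic to $X$ and makes transparent \emph{why} the separation works---namely that $\phi$ itself furnishes the separating function---which is the point you single out in your final paragraph. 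Both arguments are short; the factorization is cleaner conceptually but hides the role of $\tau_{us}$, while the direct argument exposes it.
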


\begin{proof}
We prove for $\sup$, the other case is analogous. Let
$\sup\limits_{x\in F}\phi(x)=\beta<\alpha$, $\alpha,\beta\in I$.
The set $U=\phi^{-1}([0;\frac{\alpha+\beta}2))$ is open, and $F$ is
completely separated from $X\setminus U$, hence $F\in \langle
U\rangle$. If $G\in\exp X$, $G\in \langle U\rangle$, then
$\sup\limits_{x\in F}\phi(x)\le \frac{\alpha+\beta}2<\alpha$ as
well, and the preimage of the set $[0;\alpha)$ under the map $\psi$ is open.

Now let $\sup\limits_{x\in F}\phi(x)=\beta>\alpha$,
$\alpha,\beta\in I$. There exists a point $x\in F$ such that
$\phi(x)>\frac{\alpha+\beta}2$, hence $F$ intersects the open set
$U=\phi^{-1}((\frac{\alpha+\beta}2;1])$. Then $\langle
X,U\rangle\ni F$, and $G\in\exp X$, $G\in\langle X,U\rangle$
implies $\sup\limits_{x\in G}\phi(x)\ge
\frac{\alpha+\beta}2>\alpha$. Therefore the preimage $\psi^{-1}(\alpha;1]$
is open as well, which implies the continuity of $\psi$.
\end{proof}

\begin{lem}
Let a function $\psi:\exp X\to I$ be continuous and monotonic. Then
$\phi$ attains its minimal value on each compact inclusion
hyperspace $\CCF\in\check GX$.
\end{lem}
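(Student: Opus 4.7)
The plan is to exploit the $\tau_l$-compactness of $\CCF$ (condition (a) of the preceding lemma) together with the monotonicity of $\psi$ to reduce the problem to a finite-intersection-property argument in $\expl X$. Let $\alpha = \inf_{F \in \CCF} \psi(F)$ and, for each $\beta > \alpha$, put $T_\beta = \{F \in \CCF : \psi(F) \le \beta\}$. These are non-empty and form a decreasing family. If each $T_\beta$ is closed in the subspace topology inherited from $\expl X$, then the $\tau_l$-compactness of $\CCF$ forces $\bigcap_{\beta > \alpha} T_\beta \ne \ES$; any $F_*$ in this intersection satisfies $\psi(F_*) \le \alpha$, and hence $\psi(F_*) = \alpha$ since $F_* \in \CCF$.

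The main obstacle is the passage from continuity of $\psi$ in the (strictly finer) ambient topology on $\exp X$ to $\tau_l$-closedness of $T_\beta$. I would extract this from the general topological fact that every open and upward-closed (under $\subset$) subset $O$ of $\exp X$ is automatically $\tau_l$-open. Applied to $O_\beta = \{F \in \exp X : \psi(F) > \beta\}$ --- open by continuity of $\psi$ and upward-closed by monotonicity --- this yields the $\tau_l$-openness of $O_\beta$, and hence the $\tau_l$-closedness of $T_\beta$ in $\CCF$.

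To prove the key fact, I would fix $F_0 \in O$ and use the base supplied by the preceding lemma to pick a neighborhood $W = \langle U_1, \dots, U_k\rangle \subset O$ of $F_0$. Let $W' = \langle X, U_1\rangle \cap \dots \cap \langle X, U_k\rangle$; this is $\tau_l$-open and contains $F_0$ because $F_0 \cap U_i \ne \ES$ for each $i$. For an arbitrary $G \in W'$ I would pick $x_i \in G \cap U_i$ and form the finite set $F' = \{x_1, \dots, x_k\} \subset G$. Since $F'$ is finite and contained in the open set $U_1 \cup \dots \cup U_k$, the Tychonoff property furnishes, for each $i$, a continuous function on $X$ equal to $1$ at $x_i$ and vanishing on $X \setminus (U_1 \cup \dots \cup U_k)$; the pointwise maximum of these functions witnesses the complete separation of $F'$ from $X \setminus (U_1 \cup \dots \cup U_k)$, while $F'$ manifestly meets each $U_i$. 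Thus $F' \in W \subset O$, and upward-closedness of $O$ forces $G \supset F'$ to lie in $O$. This proves $W' \subset O$, so $O$ is $\tau_l$-open, and the finite-intersection-property argument in $\tau_l$-compact $\CCF$ then concludes the proof.
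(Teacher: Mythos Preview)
Your proof is correct and follows the same approach as the paper: both rest on the observation that a continuous monotonic $\psi$ is lower semicontinuous for $\tau_l$, after which $\tau_l$-compactness of $\CCF$ forces the infimum to be attained. The paper states the lower semicontinuity in one line without justification and packages the conclusion as compactness of $\psi(\CCF)$ in the topology $\{(a,+\infty)\cap I\}$ on $I$; you unpack both steps explicitly, supplying the argument (open upward-closed subsets of $\exp X$ are $\tau_l$-open) that the paper omits.
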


\begin{proof}
If $\psi$ is continuous and monotonic, then it is lower
semicontinuous with respect to the lower topology. Then the image
of the compact set $\CCF$ under $\psi$ is compact in the topology
$\{I\cap (a,+\infty)\mid a\in\BBR\}$ on $I$, therefore $\psi(\CCF)$
contains a least element.
\end{proof}

\begin{stat}
The topology on $\check GX$ is the weakest among topologies such
that for each continuous function $\phi:X\to I$ the map $m_\phi$
which sends each $\CCF\in \check GX$ to $\min\{\sup\limits_F
\phi\mid F\in\CCF\}$ is continuous. If $\psi:\exp X\to I$ is a
continuous monotonic map, then the map which sends each $\CCF\in
\check GX$ to $\min\{\psi(F)\mid F\in\CCF\}$ is
continuous w.r.t. this topology.
\end{stat}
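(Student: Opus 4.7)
The plan is to prove the first assertion by verifying separately that (a) each $m_\phi$ is continuous on $\check GX$ (so the weak topology $\tau'$ they generate is contained in the topology $\tau$ of $\check GX$) and (b) each subbase element $U^+, U^-$ of $\check GX$ lies in $\tau'$ (giving $\tau \subseteq \tau'$, hence $\tau = \tau'$). The second assertion, continuity of $M_\psi := \min \circ \psi$ for continuous monotonic $\psi$, will be proved by a parallel but more delicate argument, and recovers (a) as the special case $\psi(F) = \sup_F \phi$ via Lemma~\ref{lm.sup-cont}.

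For (a), the subbase description of $\check GX$ yields directly $m_\phi^{-1}((\alpha, 1]) = V^-$ with $V := \phi^{-1}((\alpha, 1])$ (using that the minimum is attained, by the preceding lemma, so ``all $\sup_F \phi > \alpha$'' is equivalent to ``every $F \in \CCF$ meets $V$''), and $m_\phi^{-1}([0, \alpha)) = \bigcup_{\beta < \alpha} (\phi^{-1}([0, \beta)))^+$, where $\phi$ itself witnesses the required complete separation. For the $U^+$ half of (b), $\CCF \in U^+$ is equivalent to the existence of $g \in C(X, I)$ with $g|_{X \setminus U} = 1$ and $m_g(\CCF) = 0$, giving $U^+ = \bigcup\{m_g^{-1}([0, 1)) : g|_{X \setminus U} = 1\}$, a union of $\tau'$-opens.

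The main obstacle is the $U^-$ half of (b), since the condition ``every $F \in \CCF$ meets $U$'' is not the preimage of any single $m_\phi$ and must be captured by a compactness argument on $\beta X$. Given $\CCF \in U^-$, I pass to $\CCG := e_GX(\CCF) \in G\beta X$; every $A \in \CCG$ satisfies $A \cap X \in \CCF$ and hence meets $U$. For each such $A$ I pick $y_A \in A \cap U$ and a Tychonoff separator $\phi_A \in C(X, I)$ with $\phi_A(y_A) = 1$ and $\phi_A|_{X \setminus U} = 0$, then set $\tilde V_A := (\beta\phi_A)^{-1}((0, 1])$, open in $\beta X$ with $\tilde V_A \cap X \subset U$. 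The subbase elements $\langle \beta X, \tilde V_A\rangle$ of $\expl \beta X$ cover the compact $\CCG$, so Alexander's lemma gives a finite subcover indexed by $A_1, \ldots, A_n$; taking $\phi := \max_i \phi_{A_i}$ (so $\phi|_{X \setminus U} = 0$), each $F \in \CCF$ has $\Cl_{\beta X} F \in \CCG$ meeting some $\tilde V_{A_i}$, hence by density $F$ itself meets $\tilde V_{A_i} \cap X \subset U$ and $\sup_F \phi > 0$, giving $m_\phi(\CCF) > 0$ and $\CCF \in m_\phi^{-1}((0, 1]) \subset U^-$.

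Continuity of $M_\psi$ follows the same scheme but uses the Vietoris topology on $\exp \beta X$. For $M_\psi^{-1}([0, \alpha))$, I pick $F_0 \in \CCF$ with $\psi(F_0) < \alpha$ and a basic neighborhood $\langle U_1, \ldots, U_k\rangle \ni F_0$ inside $\psi^{-1}([0, \alpha))$; monotonicity enlarges this to $\langle U\rangle \subset \psi^{-1}([0, \alpha))$ with $U := U_1 \cup \dots \cup U_k$ (since $\psi(F') \le \psi(F_0 \cup F') < \alpha$), so $\CCF \in U^+ \subset M_\psi^{-1}([0, \alpha))$. For $M_\psi^{-1}((\alpha, 1])$, I invoke the embedding $e_{\exp} X : \exp X \hookrightarrow \exp \beta X$ from the first lemma of the section: for each $F \in \CCF$, continuity of $\psi$ provides a basic $\exp X$-open $B_F \ni F$ inside $\psi^{-1}((\alpha, 1])$, which by the subspace property lifts to a Vietoris-open $\tilde B_F \subop \exp \beta X$ with $(e_{\exp} X)^{-1}(\tilde B_F) = B_F$. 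The family $\{\tilde B_F\}$ covers the Vietoris-compact $\CCG$, and a finite subcover $\tilde B_{F_1}, \ldots, \tilde B_{F_n}$ yields $\CCG \subset \tilde V := \bigcup_i \tilde B_{F_i}$; then $\langle \tilde V\rangle \cap G\beta X$ pulls back via $e_GX$ to a $\check GX$-neighborhood of $\CCF$ inside $M_\psi^{-1}((\alpha, 1])$, the containment holding because for any $F'$ in a pulled-back $\CCF'$, $e_{\exp} X(F') \in \tilde V$ forces $F' \in B_{F_i}$ for some $i$.
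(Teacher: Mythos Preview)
Your argument for the first assertion is correct. Parts (a), the $U^+$ half of (b), and your $\beta X$-based treatment of $U^-$ all go through; the paper handles $U^-$ internally in $X$ via compactness of $\CCF$ in $\expl X$, which is marginally simpler, but your route is sound.

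The gap is in your proof that $M_\psi^{-1}((\alpha,1])$ is open. You claim that the family $\{\tilde B_F\}_{F\in\CCF}$ covers $\CCG=e_GX(\CCF)$, but this is false as stated. Recall that $\CCG=\{A\in\exp\beta X\mid A\cap X\in\CCF\}$, so $\CCG$ contains not only the closures $\Cl_{\beta X}F$ but every closed $A\subset\beta X$ whose trace on $X$ lies in $\CCF$. In particular, for any $F_0\in\CCF$ and any $p\in\beta X\setminus X$, the set $A=\Cl_{\beta X}F_0\cup\{p\}$ belongs to $\CCG$. A generic Vietoris lift $\tilde B_F=\langle W_0,\dots,W_m\rangle$ satisfying $(e_{\exp}X)^{-1}(\tilde B_F)=B_F$ will not contain such $A$ unless $p\in W_0\cup\dots\cup W_m$, and nothing in your construction forces this. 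So the cover fails and no finite subcover can be extracted.

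What is missing is precisely the use of monotonicity of $\psi$ on this side (you used it only for the $[0,\alpha)$ half). Since $\psi$ is isotone, the Vietoris-open set $\psi^{-1}((\alpha,1])$ is upward closed; taking $B_F=\langle U_0,U_1,\dots,U_k\rangle$ with $U_1\cup\dots\cup U_k$ completely separated from $X\setminus U_0$, one checks (by passing to a finite subset $\{x_1,\dots,x_k\}\subset G$) that already the \emph{lower}-open set $\langle X,U_1,\dots,U_k\rangle$ lies inside $\psi^{-1}((\alpha,1])$. This is exactly the key step in the paper's proof. Once the $B_F$ are lower-open, their lifts $\langle\beta X,\tilde U_1,\dots,\tilde U_k\rangle$ \emph{do} cover $\CCG$ (any $A\in\CCG$ meets each $\tilde U_i$ because $A\cap X$ meets each $U_i$), and your compactness argument goes through. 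But at that point the excursion to $\beta X$ is unnecessary: the paper simply uses that $\CCF$ itself is compact in $\expl X$, extracts a finite subcover $\langle X,U_1^l,\dots,U_{k_l}^l\rangle$, $1\le l\le n$, and exhibits the neighbourhood $\bigcap\{(U^1_{j_1}\cup\dots\cup U^n_{j_n})^-\}$ of $\CCF$ directly.
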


\begin{proof}
Let $\psi:\exp X\to I$ be a continuous monotonic map, and
$\min\{\psi(F)\mid F\in\CCF\}<\alpha$, then there is $F\in\CCF$
such that $\psi(F)<\alpha$. Due to continuity there is a
neighborhood $\langle U_1,\dots,U_k\rangle\ni F$ such that
$\psi(G)<\alpha$ for all $G\in\langle U_1,\dots,U_k\rangle$. For
$\phi$ is monotonic, the inequality $\psi(G)<\alpha$ is valid for
all $G\in \langle U_1\cup\dots\cup U_k\rangle$. Therefore
$\min\{\psi(G)\mid G\in\CCG\}<\alpha$ for all $\CCG\in
(U_1\cup\dots\cup U_k)^+$, and the latter open set contains $\CCF$.

If $\min\{\psi(F)\mid F\in\CCF\}>\alpha$, then $\psi(F)>\alpha$ for
all $F\in\CCF$. The function $\psi$ is continuous, hence each
$F\in\CCF$ is in a basic neighborhood $\langle
U_0,U_1,\dots,U_k\rangle$ in $\exp X$ such that for all $G$ in this
neighborhood the inequality $\psi(G)>\alpha$ holds. We can assume
that $U_1\cup U_2\cup\dots\cup U_k$ is completely separated from
$X\setminus U_0$, then $\psi(G)>\alpha$ also for all $G\in \langle
X,U_1,U_2,\dots,U_k\rangle$. The latter set is an open neighborhood
of $F$ in the lower topology. The set $\CCF$ is compact in $\expl
X$, therefore we can choose a finite subcover $\langle
U^1_1,\dots,U^1_{k_1}$, \dots, $\langle U^n_1,\dots,U^n_{k_n}$ of
$\CCF$ such that $G\in \langle U^l_1,\dots,U^l_{k_l}\rangle$, $1\le
l\le n$, implies $\psi(G)>\alpha$. Then $\CCF$ is in an open
neighborhood
$$
\CCU=\bigcap\{(U^1_{j_1}\cup
U^2_{j_2}\cup\dots\cup U^n_{j_n})^-
\mid
1\le j_1\le k_1,
2\le j_2\le k_2,
\dots
n\le j_n\le k_n\}.
$$
Each element $G$ of any compact inclusion hyperspace $\CCG\in\CCU$
intersects all $U^l_1,\dots,U^l_{k_l}$ for at least one
$l\in\{1,\dots,n\}$, therefore $\min\{\psi(G)\mid
G\in\CCG\}>\alpha$ for all $\CCG\in \CCU$. Thus $\min\{\psi(F)\mid
F\in\CCF\}$ is continuous w.r.t. $\CCF\in\check GX$.

Due to Lemma~\ref{lm.sup-cont} it implies that the map $m:\check
GX\to I^{C(X,I)}$, $m(\CCF)=(m_\phi(\CCF))_{\phi\in C(X,I)}$ for
$\CCF\in\check GX$, is continuous.

Now let $\CCF\in U^+$ for $U\subop X$, i.e. there is $F\in\CCF$ and
a continuous function $\phi:X\to I$ such that $\phi|_F\equiv 0$,
$\phi|_{X\setminus U}=1$. Then $m_\phi(\CCF)<1/2$, and for any
$\CCG\in\check GX$ the inequality $m_\phi(\CCG)<1/2$ implies
$\CCG\in U^+$.

If $\CCF\in U^-$, $U\subop X$, then due to the compactness of
$\CCF$ we can choose $V\subop X$ such that $\CCF\in V^-$, and there
is a continuous map $\phi:X\to I$ such that $\phi|_V=1$,
$\phi|_{X\setminus U}=0$. Then $m_\phi(\CCF)=1>1/2$, and for each
$\CCG\in\check GX$ the inequality $m_\phi(\CCG)>1/2$ implies
$\CCG\in U^-$. Therefore the inverse to $m$ is continuous on
$m(\check GX)$, thus the map $m:\check GX\to I^{C(X,I)}$ is an
embedding, which completes the proof.
\end{proof}

\begin{rem}
It is obvious that the topology on $\check GX$ can be equivalently
defined as the weak topology w.r.t. the collection of maps
$m^\phi:\check GX\to I$, $m^\phi(\CCF)=\max\{\inf\limits_F
\phi\mid F\in\CCF\}$, for all $\phi\in C(X,I)$.
\end{rem}

Further we will need the subspace
$$
\hat GX=\{\CCF\in\check GX\mid \text{for all }F\in\CCF
\text{ there is a compactum }K\subset F,K\in \CCF
\}\subset \check GX.
$$
It is easy to see that its image under $e_GX:\check GX\hra G\beta X$
is the set
$$
G_*X=\{\CCG\in G^*X\mid \text{for all }G\in\CCG
\text{ there is a compactum }K\subset G\cap X,K\in \CCG
\},
$$
and $\check Gf(\hat GX)\subset \hat GY$ for each continuous map $f:X\to Y$ of
Tychonoff spaces. Thus we obtain a subfunctor $\hat G$
of the functor $\check G:\Tych\to \Tych$.

\begin{lem}
Let $X$ be a Tychonoff space. Then $\mu_G\beta X\circ
\check Ge_GX(\check G^2X)\subset e_GX(\check GX)$.
\end{lem}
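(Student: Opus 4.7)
The plan is to fix an arbitrary $\mathbf{F}\in\check G^2X$, set $\CCG:=\mu_G\beta X(\check Ge_GX(\mathbf{F}))\in G\beta X$, and verify that $\CCG\in e_GX(\check GX)=G^*X$ by showing that for any $F_1,F_2\subcl\beta X$ with $F_1\cap X=F_2\cap X$, one has $F_1\in\CCG\iff F_2\in\CCG$. To this end I will first unfold $\check Ge_GX$ and $\mu_G\beta X$ into an explicit sandwich condition, then establish closedness of a single auxiliary set, and finally read off that the condition depends only on the trace with $X$.

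Unpacking the formula for $\check Gf$ and the definition of $\mu_G$ on the compactum $\beta X$, one finds that $F\in\CCG$ iff there exist $\mathrm{H}\in\mathbf{F}$ and a closed $\mathrm{H}'\subcl G\beta X$ with $e_GX(\mathrm{H})\subset\mathrm{H}'\subset T_F$, where $T_F:=\{\CCK\in G\beta X\mid F\in\CCK\}$. The key technical step will be the lemma: for every $F\subcl\beta X$, the set $T_F$ is closed in $G\beta X$. I intend to prove this directly from the fact that $G\beta X$ carries the subspace topology from the Vietoris topology on $\exp^2\beta X$; since $\beta X$ is compact Hausdorff, $\exp\beta X$ is Hausdorff, so $V:=\exp\beta X\setminus\{F\}$ is open, whence the upper-Vietoris subbasic set $\{\CCK\in\exp^2\beta X\mid\CCK\subset V\}$ meets $G\beta X$ precisely in the complement $G\beta X\setminus T_F$.

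Granted that $T_F$ is closed, the existence of a closed sandwich $e_GX(\mathrm{H})\subset\mathrm{H}'\subset T_F$ becomes equivalent to the single inclusion $e_GX(\mathrm{H})\subset T_F$ (one may take $\mathrm{H}'=T_F$ itself). The defining property $F\in e_GX(\CCF')\iff F\cap X\in\CCF'$ of $e_GX$ then rewrites this inclusion as the condition ``$F\cap X\in\CCF'$ for every $\CCF'\in\mathrm{H}$'', which visibly depends only on $F\cap X$. This forces $F_1\in\CCG\iff F_2\in\CCG$ whenever $F_1\cap X=F_2\cap X$, so $\CCG\in G^*X=e_GX(\check GX)$ as desired. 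I expect the only genuine obstacle to be the closedness of $T_F$; the remainder is a routine diagram chase through the explicit descriptions of $\check G$, $\mu_G$, and $e_G$ recalled in the excerpt.
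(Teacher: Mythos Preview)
Your proposal is correct and follows essentially the same route as the paper's proof: both reduce membership $F\in\mu_G\beta X(\check Ge_GX(\mathbf{F}))$ to the condition that some $\mathrm{H}\in\mathbf{F}$ satisfies $e_GX(\mathrm{H})\subset T_F$, and both rely on the closedness of $T_F=\{\CCK\in G\beta X\mid F\in\CCK\}$ in $G\beta X$ to handle the closure that appears in the formula for $\check Ge_GX$. Your version is slightly more streamlined in that you use the closedness of $T_F$ once, up front, to replace the sandwich $e_GX(\mathrm{H})\subset\mathrm{H}'\subset T_F$ by the single inclusion $e_GX(\mathrm{H})\subset T_F$; the paper instead works with $\mathrm{H}'=\Cl_{G\beta X}e_GX(\mathrm{H})$, drops the closure to pass to the trace condition, and then reinstates it via closedness of $T_G$ on the way back---but this is a cosmetic difference, not a genuinely different argument.
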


The composition in the above inclusion is legal because $\check
G\beta X=G\beta X$.

\begin{proof}
Let $\mathbf{F}\in \check G^2X$, $\CCF=\mu_G\beta X\circ
\check Ge_GX(\mathbf{F})$, and $F,G\subcl\beta X$ are
such that $F\cap X=G\cap X$. Assume $F\in\CCF$, then there is
$\mathrm{H}\in \mathbf{F}$ such that $F\in\CCG$ for all
$\CCG\in\Cl_{G\beta X}e_GX(\mathrm{H})$, therefore for all $\CCG\in
e_GX(\mathrm{H})$. It is equivalent to $F\cap X\in \CCH$ for all
$\CCH\in \mathrm{H}\subset \check GX$, which in particular implies
that $F\cap X\ne \ES$. By the assumption, $G\cap X\in \CCH$ for all
$\CCH\in\mathrm{H}$ as well, hence $G\in\CCG$ for all $\CCG\in
e_GX(\mathrm{H})$. The set of all $\CCH\in\check GX$ such that
$\CCH\ni A$ is closed for any $A\in\exp X$, thus $G\in\CCG$ for all
$\CCG\in \Cl_{G\beta X}e_GX(\mathrm{H})$. We infer that $G\in\CCF$,
and $\CCF\in\check GX$.
\end{proof}

For $e_GX$ is an embedding, we define $\check \mu_GX$ as a map
$\check G^2X\to\check GX$ such that $e_GX\circ
\check\mu_GX=\mu_G\beta X\circ \check G e_GX$. This map is unique
and continuous. Following the latter proof, we can see that
$$
\check \mu_G(\mathbf{F})=
\{F\in \exp X\mid
F\in\bigcap \mathrm{H}\text{ for some }\mathrm{H}\in \mathbf{F}\},
\mathbf{F}\in \check G^2X,
$$
i.e. the formula is the same as in $\Comp$.

For the inclusion $\eta_G\beta X\circ iX(X)\subset e_GX(\check GX)$
is also true, there is a~unique map $\check
\eta_GX:X\to \check GX$ such that $e_GX\circ \eta_GX=\eta_G\beta X\circ
iX$, namely $\check\eta_GX(x)=\{F\in\exp X\mid F\ni x\}$ for each
$x\in X$, and this map is continuous. It is straightforward to
prove that the collections
$\check\eta_G=(\check\eta_GX)_{X\in\Ob\Tych}$ and
$\check\mu_G=(\check\mu_GX)_{X\in\Ob\Tych}$ are natural
transformations resp.\ $\uni{\Tych}\to\check G$ and $\check G^2
\to\check G$.
\begin{theo}\label{st.gcheck-mon}
The triple $\check\BBG=(\check G,\check \eta_G,\check \mu_G)$ is a
monad in $\Tych$.
\end{theo}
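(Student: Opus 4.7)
The plan is to leverage the fact that $\BBG = (G, \eta_G, \mu_G)$ is already a monad on $\Comp$ and that $e_GX$ is an embedding (hence monic in $\Tych$), so each of the three monad identities for $\check\BBG$ can be verified by post-composing with $e_GX$ and reducing to the corresponding identity for $\BBG$ on $G\beta X$. Throughout I use the three bookkeeping facts established before the theorem: the defining equations $e_GX\circ\check\mu_GX=\mu_G\beta X\circ\check Ge_GX$ and $e_GX\circ\check\eta_GX=\eta_G\beta X\circ iX$, the identity $\check GiX=e_GX$, and the observation that $\check G$, $\check\eta_G$, $\check\mu_G$ restrict to $G$, $\eta_G$, $\mu_G$ on compacta.

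For the first unit identity $\check\mu_GX\circ\check\eta_G\check GX=\mathrm{id}_{\check GX}$, I would apply naturality of $\check\eta_G$ along the continuous map $e_GX\colon\check GX\to G\beta X$, which gives $\check Ge_GX\circ\check\eta_G\check GX=\eta_GG\beta X\circ e_GX$, and then compute
\[
e_GX\circ\check\mu_GX\circ\check\eta_G\check GX=\mu_G\beta X\circ\check Ge_GX\circ\check\eta_G\check GX=\mu_G\beta X\circ\eta_GG\beta X\circ e_GX=e_GX
\]
by the first unit axiom for $\BBG$; the claim then follows because $e_GX$ is monic. For the second unit identity $\check\mu_GX\circ\check G\check\eta_GX=\mathrm{id}_{\check GX}$, functoriality of $\check G$ together with $e_GX\circ\check\eta_GX=\eta_G\beta X\circ iX$ and $\check GiX=e_GX$ yields $\check Ge_GX\circ\check G\check\eta_GX=G\eta_G\beta X\circ e_GX$, whence
\[
e_GX\circ\check\mu_GX\circ\check G\check\eta_GX=\mu_G\beta X\circ G\eta_G\beta X\circ e_GX=e_GX
\]
using the second unit axiom for $\BBG$.

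For associativity $\check\mu_GX\circ\check G\check\mu_GX=\check\mu_GX\circ\check\mu_G\check GX$, applying the defining equation for $\check\mu_GX$ twice to the left hand side and combining with functoriality of $\check G$ gives
\[
e_GX\circ\check\mu_GX\circ\check G\check\mu_GX=\mu_G\beta X\circ G\mu_G\beta X\circ\check G^2e_GX,
\]
while for the right hand side naturality of $\check\mu_G$ along $e_GX$ produces $\check Ge_GX\circ\check\mu_G\check GX=\mu_GG\beta X\circ\check G^2e_GX$, so
\[
e_GX\circ\check\mu_GX\circ\check\mu_G\check GX=\mu_G\beta X\circ\mu_GG\beta X\circ\check G^2e_GX.
\]
Associativity of $\mu_G$ on the compactum $\beta X$ then equates the two expressions, and monicity of $e_GX$ completes the argument.

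The proof is essentially a diagram chase with no deep geometric content; the main delicate point is to confirm that the naturality squares for $\check\eta_G$ and $\check\mu_G$ along $e_GX\colon\check GX\to G\beta X$ may be invoked with the right hand sides collapsing to $\eta_G$ and $\mu_G$ respectively (because $G\beta X$ lies in $\Comp$, where $\check G$ coincides with $G$). Once this identification is in place, all three identities reduce mechanically to the known monad axioms for $\BBG$ on $\beta X$.
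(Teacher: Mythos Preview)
Your proposal is correct and follows essentially the same approach as the paper: post-compose with the embedding $e_GX$, use the defining equations for $\check\mu_GX$ and $\check\eta_GX$ together with naturality to reduce each identity to the corresponding monad axiom for $\BBG$ on $\beta X$, and conclude by monicity of $e_GX$. The paper only writes out the first unit law in full and dismisses the other two with ``similarly'', whereas you spell out all three, but the strategy is identical.
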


\begin{proof}
Let $X$ be a Tychonoff space and $iX$ its embedding into $\beta X$.
Then~:
\begin{gather*}
e_GX\circ \check \mu X\circ \check \eta \check GX=
\mu\beta X\circ\check Ge_GX\circ\check\eta \check GX=
\\
\mu\beta X\circ\eta G\beta X\circ e_GX=
\uni{G\beta X}\circ e_GX=e_GX,
\end{gather*}
thus $\check\mu_G X\circ \check G\check\eta X=\check\mu_G X\circ
\check\eta_G\check GX=\uni{\check GX}$, similarly we obtain the
equalities $\check \mu_G X\circ \check G\check\eta_G X=\uni{\check
GX}$ and $\check\mu_G X\circ \check G\check \mu_G X=\check\mu_G
X\circ\check\mu_G\check GX$.
\end{proof}

For $\check GX$, $\check \eta_GX$, $\check\mu_GX$ coincide with
$GX$, $\eta_GX$, $\mu_GX$ for any compactum $X$, the monad $\check
\BBG$ is an extension of the monad $\BBG$ in $\Comp$ to $\Tych$.

\section{Functional representation of the capacity monad in the category of compacta}

In the sequel $X$ is a compactum, $c$ is a capacity on $X$ and
$\phi:X\to \BBR$ is a continuous function. We define the
\emph{Sugeno integral} of $\phi$ with respect to~$c$ by the
formula~\cite{Nyk:SugInTop:0X}~:
$$
\jint_X \phi(x)\land dc(x)=
\sup\{c(\{x\in X\mid \phi(x)\ge \alpha\})\land \alpha
\mid \alpha\in I\}.
$$

The following theorem was recently obtained (in an equivalent form)
by Radul~\cite{Rad:FuncRepr:09} under more restrictive conditions,
namely restrictions of normalizedness and non-expandability were
also imposed. Therefore for the readers convenience we provide a
formulation and a short proof of a version more suitable for our
needs.

\begin{theo}\label{th.char-sugeno-i}
Let $X$ be a compactum, $c$ a capacity on $X$. Then the functional
$i:C(X,I)\to I$, $i(\phi)=\jint_X
\phi(x)\land dc(x)$ for $\phi\in C(X,I)$, has the following properties~:
\begin{enumerate}
\item for all $\phi,\psi\in C(X,I)$ the inequality $\phi\le\psi$ (i.e.\
$\phi(x)\le \psi(x)$ for all $x\in X$) implies $i(\phi)\le i(\psi)$
($i$ is \emph{monotonic});
\item $i$
satisfies the equalities $i(\alpha\land\phi)=\alpha\land i(\phi)$,
$i(\alpha\lor\phi)=\alpha\lor i(\phi)$ for any $\alpha\in I$,
$\phi\in C(X,I)$.
\end{enumerate}

Conversely, any functional $i:C(X,I)\to I$ satisfying (1),(2) has
the form $i(\phi)=\jint_X\phi(x)\land dc(x)$ for a uniquely
determined capacity $c\in MX$.
\end{theo}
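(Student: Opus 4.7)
The plan is to prove the two directions of the statement separately: the forward direction by unwinding the definition of the Sugeno integral on level sets, and the converse by reconstructing the capacity $c$ from the functional $i$ and verifying both the capacity axioms and the integral representation.

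For the forward direction, property (1) is immediate from monotonicity of $c$: if $\phi\le\psi$ then $\{\phi\ge\alpha\}\subset\{\psi\ge\alpha\}$, so $c(\{\phi\ge\alpha\})\le c(\{\psi\ge\alpha\})$, and taking the supremum over $\alpha$ yields $i(\phi)\le i(\psi)$. For (2), I would compute the level sets of $\alpha\land\phi$ and $\alpha\lor\phi$ explicitly: $\{\alpha\land\phi\ge\beta\}=\{\phi\ge\beta\}$ when $\beta\le\alpha$ and is empty otherwise, while $\{\alpha\lor\phi\ge\beta\}=X$ when $\beta\le\alpha$ and equals $\{\phi\ge\beta\}$ otherwise. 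Splitting the supremum over $\beta$ at $\alpha$ and using $c(\ES)=0$, $c(X)=1$ produces the two desired identities.

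For the converse, given a functional $i\colon C(X,I)\to I$ satisfying (1) and (2), I would first observe that $i(0)=0$ and $i(1)=1$ are forced by (2) applied to the constant functions. I then define
\[
c^*(U)=\sup\{i(\psi)\mid \psi\in C(X,I),\ \psi|_{X\setminus U}\equiv 0\}
\]
for each open $U\subop X$, and set $c(F)=\inf\{c^*(U)\mid U\subop X,\ U\supset F\}$ for each closed $F\subcl X$ (with $c(\ES)=0$). Monotonicity of $c^*$, and hence of $c$, is immediate; normalization follows from $c^*(\ES)=i(0)=0$ and $c^*(X)=i(1)=1$; and upper semicontinuity is built into the construction, because $c(F)<a$ furnishes an open $U\supset F$ with $c^*(U)<a$ and any closed $G\subset U$ inherits $c(G)\le c^*(U)<a$.

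The main obstacle is then to verify $i(\phi)=\jint_X\phi(x)\land dc(x)$. For the inequality ``$\ge$'', I fix $\alpha\in I$ and $\eps\in(0,\alpha)$, set $U=\{\phi>\alpha-\eps\}$, and observe that any $\psi\in C(X,I)$ with $\psi|_{X\setminus U}\equiv 0$ satisfies $\psi\land(\alpha-\eps)\le\phi$ pointwise; properties (1) and (2) then yield $(\alpha-\eps)\land i(\psi)\le i(\phi)$, and passing to the supremum over $\psi$ and then letting $\eps\to 0$ gives $c(\{\phi\ge\alpha\})\land\alpha\le i(\phi)$. The delicate ``$\le$'' direction I would handle by contradiction: if $\beta:=i(\phi)$ strictly exceeded the Sugeno integral then $c(\{\phi\ge\beta\})<\beta$, so some open $U\supset\{\phi\ge\beta\}$ has $c^*(U)<\beta$, and by compactness of $X$ a level set $F_\gamma=\{\phi\ge\gamma\}$ with $\gamma<\beta$ already sits inside $U$. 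A Urysohn function $\eta\in C(X,I)$ with $\eta|_{F_\gamma}\equiv 1$ and $\eta|_{X\setminus U}\equiv 0$ then yields the test function $\psi=(\phi\land\gamma)\lor\eta$, which satisfies $\psi\ge\phi$ pointwise (so $i(\psi)\ge\beta$ by (1)) but $\psi\le\gamma\lor\eta$ (so $i(\psi)\le\gamma\lor c^*(U)<\beta$ by (1) and (2)), contradicting the definition of $\beta$. Uniqueness of $c$ is then automatic, since the forward direction combined with the upper semicontinuity of any representing capacity $c'$ forces $c'(F)=\inf\{i(\phi)\mid\phi\in C(X,I),\ \phi|_F\equiv 1\}$, an expression that depends on $i$ alone.
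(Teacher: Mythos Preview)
Your argument is correct. The forward direction matches the paper (which simply declares it obvious), and your converse is complete: the level-set bookkeeping for ``$\ge$'' and the Urysohn test function $\psi=(\phi\land\gamma)\lor\eta$ for ``$\le$'' both go through as written.

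The route, however, differs from the paper's. The paper defines the candidate capacity directly as $c(F)=\inf\{i(\psi)\mid \psi\in C(X,I),\ \psi\ge\chi_F\}$ and first isolates a \emph{level-set comparison lemma}: if $\{\phi\ge\alpha\}\subset\{\psi\ge\alpha\}$ and $i(\phi)\ge\alpha$, then $i(\psi)\ge\alpha$. That lemma (proved by a Urysohn-type interpolation between $\phi$ and $\psi$ on the gap between two level sets) is the technical core; a second lemma then rewrites $\inf\{i(\phi)\mid\phi\ge\alpha\land\chi_F\}$ as $\alpha\land c(F)$, after which the Sugeno representation drops out in one line. You instead build $c$ in two steps, first an \emph{inner} capacity $c^*(U)$ on open sets via a supremum, then the outer regularization $c(F)=\inf_{U\supset F}c^*(U)$; upper semicontinuity comes for free, and the Urysohn function appears only once, inside the contradiction for ``$\le$''. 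Both arguments pivot on the same Urysohn trick, but the paper packages it as a reusable comparison principle while you deploy it ad hoc in a single test function. Your version is more streamlined for this one theorem; the paper's lemma is the kind of statement one might reuse elsewhere (and indeed its formulation makes the later Tychonoff extension more transparent). Your uniqueness argument, incidentally, recovers exactly the paper's formula $c'(F)=\inf\{i(\phi)\mid\phi|_F\equiv 1\}$, so the two definitions of $c$ coincide a posteriori.
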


In the two following lemmata $i:C(X,I)\to I$ is a functional that
satisfies (1),(2).

\begin{lem}
If $\alpha\in I$ and continuous functions $\phi,\psi:X\to I$ are
such that $\{x\in X\mid\phi(x)\ge\alpha\}\subset
\{x\in X\mid\psi(x)\ge\alpha\}$ and $i(\phi)\ge\alpha$, then
$i(\psi)\ge\alpha$.
\end{lem}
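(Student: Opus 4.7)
My plan is to use axiom~(2) to reduce to the bounded case, then employ Urysohn's lemma in the compact Hausdorff space $X$ to construct an auxiliary function pointwise dominated by $\psi$, and finally apply monotonicity together with an $\eps$-limit argument.

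Write $A=\{\phi\ge\alpha\}$ and $B=\{\psi\ge\alpha\}$; by hypothesis $A\subset B$. By axiom~(2), $i(\phi\land\alpha)=\alpha\land i(\phi)=\alpha$ and the $\alpha$-level set of $\phi\land\alpha$ is still $A$, while $i(\psi)\ge\alpha$ is equivalent to $i(\psi\land\alpha)=\alpha$ (since $i(\psi\land\alpha)=\alpha\land i(\psi)$). So I may assume $\phi,\psi\le\alpha$ throughout, $i(\phi)=\alpha$, and the goal becomes $i(\psi)=\alpha$. Note that the trivial pointwise bound $\phi\le\alpha\le\psi\lor\alpha$ combined with axioms~(1),~(2) only gives $\alpha\le i(\psi)\lor\alpha$, which is automatic; so finer topological input is required.

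For each $\eps\in(0,\alpha)$ the closed sets $A$ and $\{\psi\le\alpha-\eps\}$ are disjoint in $X$, because $A\subset B$ forces $\psi\ge\alpha>\alpha-\eps$ on $A$. Urysohn's lemma in the normal space $X$ therefore produces a continuous $g_\eps:X\to[0,1]$ with $g_\eps=1$ on $A$ and $g_\eps=0$ on $\{\psi\le\alpha-\eps\}$. Setting $\tilde\phi_\eps=(\alpha-\eps)g_\eps\in C(X,[0,\alpha-\eps])$, one checks $\tilde\phi_\eps\le\psi$ pointwise: on $A$, $\tilde\phi_\eps=\alpha-\eps<\alpha\le\psi$; on $\{\psi\le\alpha-\eps\}$, $\tilde\phi_\eps=0\le\psi$; and on the transition zone $\{\psi\in(\alpha-\eps,\alpha]\}$, $\tilde\phi_\eps\le\alpha-\eps<\psi$. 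Hence by monotonicity~(1), $i(\psi)\ge i(\tilde\phi_\eps)$; if $i(\tilde\phi_\eps)\ge\alpha-\eps$ can be secured, letting $\eps\to0^+$ yields $i(\psi)\ge\alpha$.

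The main obstacle is establishing $i(\tilde\phi_\eps)\ge\alpha-\eps$. Since $\tilde\phi_\eps$ is not built from $\phi$ and $\psi$ by meets and joins with \emph{constants}, its $i$-value cannot be read off axiom~(2); one must use axiom~(1) with a suitable axiom-accessible comparison function. The natural candidate $\phi\land(\alpha-\eps)$ has $i$-value $\alpha-\eps$ by axiom~(2), but is not pointwise dominated by $\tilde\phi_\eps$ in general: on points of $\{\psi\le\alpha-\eps\}\cap\{\phi>0\}$ one has $\tilde\phi_\eps=0$ while $\phi\land(\alpha-\eps)>0$. Overcoming this requires either refining the Urysohn construction so that $g_\eps$ dominates a scaled version of $\phi\land(\alpha-\eps)$ wherever $\psi$ permits, or performing an indirect chain of axiom-(2) manipulations that isolates the relevant level-$\alpha$ behaviour; this is the technical heart of the proof.
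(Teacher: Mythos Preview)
Your proposal is incomplete, and you have correctly diagnosed the gap yourself: you cannot show $i(\tilde\phi_\eps)\ge\alpha-\eps$. The function $\tilde\phi_\eps$ is related to $\psi$ from below (good for the final step) but has no usable relationship to $\phi$ that axiom~(2) can exploit; the only available link would be monotonicity, and as you observe $\phi\land(\alpha-\eps)\le\tilde\phi_\eps$ fails in general. So the argument stalls exactly where the real work is.

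The missing idea is to use \emph{both} halves of axiom~(2) through a single bridge function. Fix $\beta<\alpha$ and, using compactness of $\{\psi\le\beta\}$ (on which $\phi<\alpha$), pick $\gamma\in(\beta,\alpha)$ so that $F=\psi^{-1}[0,\beta]$ and $G=\phi^{-1}[\gamma,1]$ are disjoint. By Tietze--Urysohn construct a continuous $f:X\to I$ that equals $\phi$ on $G$, equals $\psi$ on $F$, and takes values in $[\beta,\gamma]$ in between. Then $\gamma\lor f=\gamma\lor\phi$, so the $\lor$-identity gives
\[
\gamma\lor i(f)=i(\gamma\lor f)=i(\gamma\lor\phi)=\gamma\lor i(\phi)\ge\alpha,
\]
hence $i(f)\ge\alpha$. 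And $\beta\land f=\beta\land\psi$, so the $\land$-identity gives
\[
\beta=\beta\land i(f)=i(\beta\land f)=i(\beta\land\psi)=\beta\land i(\psi),
\]
whence $i(\psi)\ge\beta$. Letting $\beta\nearrow\alpha$ finishes. The point is that the interpolant $f$ agrees with $\phi$ \emph{above} a level and with $\psi$ \emph{below} a level, so the $\lor$-axiom transports information from $\phi$ to $f$ and the $\land$-axiom transports it from $f$ to $\psi$; your $\tilde\phi_\eps$ only handled the second leg.
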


\begin{proof}
For $\alpha=0$ the statement is trivial. Otherwise assume
$i(\phi)\ge\alpha$. Let $0\le\beta<\alpha$. For $\phi,\psi$ are
continuous, there is $\gamma\in(\beta;\alpha)$ such that the closed
sets $F=\psi^{-1}([0;\beta])$ and $G=\phi^{-1}([\gamma,1])$ have an
empty intersection. Then, by Brouwer-Tietze-Urysohn Theorem, there
is a continuous function $\theta:X\to [\beta;\gamma]$ such that
$\theta|_F\equiv \beta$, $\theta|_G\equiv \gamma$. Then we define a
function $f:X\to I$ as follows~:
$$
f(x)=
\begin{cases}
\psi(x), x\in F,\\
\theta(x),x\notin F\cup G,\\
\phi(x),x\in G.
\end{cases}
$$
Then $\gamma\lor f=\gamma\lor \phi$, thus
$$
\gamma\lor i(f)=i(\gamma\lor f)= i(\gamma\lor \phi)=\gamma\lor
i(\phi)=\alpha,
$$
and $i(f)=\alpha$. Taking into account $\beta\land
f=\beta\land\psi$, we obtain
$$
\beta=\beta\land i(f)=i(\beta\land f)=i(\beta\land
\psi)=\beta\land i(\psi),
$$
thus $i(\psi)\ge \beta$ for all $\beta<\alpha$. It implies
$i(\psi)\ge\alpha$.
\end{proof}

Obviously if $\{x\in X\mid\phi(x)\ge\alpha\}= \{x\in
X\mid\psi(x)\ge\alpha\}$, then $i(\phi)\ge\alpha$ if and only if
$i(\psi)\ge\alpha$.

\begin{lem}
For each closed set $F\subset X$ and $\beta\in I$ the equality
$$
\inf\{i(\phi)\mid \phi\ge\alpha\land \chi_F\}=
\alpha\land\inf\{i(\psi)\mid \phi\ge \chi_F\}
$$
is valid.
\end{lem}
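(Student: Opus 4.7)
The plan is to prove the two inequalities separately. Write $L$ and $R$ for the left- and right-hand sides and set $c(F):=\inf\{i(\psi)\mid\psi\in C(X,I),\ \psi\ge\chi_F\}$, so that $R=\alpha\land c(F)$.

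For $L\le R$: given any $\psi\in C(X,I)$ with $\psi\ge\chi_F$, the function $\phi:=\alpha\land\psi$ is continuous, satisfies $\phi\ge\alpha\land\chi_F$, and by property (2) has $i(\phi)=\alpha\land i(\psi)$. Taking the infimum over such $\psi$ gives $L\le\alpha\land c(F)=R$.

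For the reverse inequality I would argue by contradiction. Suppose some $\phi\in C(X,I)$ with $\phi\ge\alpha\land\chi_F$ (equivalently $\phi|_F\ge\alpha$) satisfies $i(\phi)<R$; then $i(\phi)<\alpha$ and $i(\phi)<c(F)$, so I may pick $\beta$ with $i(\phi)<\beta<\min(\alpha,c(F))$. Set $\tilde\phi:=\phi\lor\beta$; by property (2), $i(\tilde\phi)=\beta\lor i(\phi)=\beta$, while $\tilde\phi|_F\ge\alpha>\beta$. For an arbitrary $\gamma\in(\beta,\alpha)$, the open set $U_\gamma:=\{x\mid\tilde\phi(x)>\gamma\}$ contains $F$, so by the Brouwer--Tietze--Urysohn theorem there is a continuous $\psi_\gamma:X\to I$ with $\psi_\gamma|_F\equiv 1$ and $\psi_\gamma|_{X\setminus U_\gamma}\equiv 0$. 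Then
$$
\{x\mid\psi_\gamma(x)\ge\gamma\}\subset U_\gamma\subset\{x\mid\tilde\phi(x)\ge\gamma\},
$$
and since $i(\tilde\phi)=\beta<\gamma$, the contrapositive of the previous lemma (applied at the threshold $\gamma$) yields $i(\psi_\gamma)<\gamma$. Because $\psi_\gamma\ge\chi_F$, this gives $c(F)\le i(\psi_\gamma)<\gamma$ for every $\gamma>\beta$, hence $c(F)\le\beta$, contradicting the choice $\beta<c(F)$.

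The main obstacle, and the only genuinely creative step, is the construction of $\psi_\gamma$. One has to recognize that the previous lemma must be used in its \emph{contrapositive} form: the value $i(\tilde\phi)=\beta$ is fixed by the $\lor$-invariance in (2), and a Urysohn-type function inscribed inside the strict super-level set $U_\gamma$ of $\tilde\phi$ then inherits the bound $i(\psi_\gamma)<\gamma$ while still being forced to equal $1$ on $F$. The passage $\gamma\to\beta^+$ and the two invocations of property (2) are then routine.
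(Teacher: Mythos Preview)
Your proof is correct. Both inequalities are handled cleanly; the only delicate point is the contrapositive application of the previous lemma at threshold $\gamma$, and you set this up properly: $\{\psi_\gamma\ge\gamma\}\subset U_\gamma\subset\{\tilde\phi\ge\gamma\}$ together with $i(\tilde\phi)=\beta<\gamma$ indeed forces $i(\psi_\gamma)<\gamma$, and letting $\gamma\searrow\beta$ gives the contradiction $c(F)\le\beta$.

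The paper takes a somewhat different, more symmetric route. Rather than splitting into two inequalities, it observes that for every $\beta<\alpha$ the two sets of functions
\[
\{\beta\land\phi\mid \phi\in C(X,I),\ \phi\ge\alpha\land\chi_F\}
\quad\text{and}\quad
\{\beta\land\psi\mid \psi\in C(X,I),\ \psi\ge\chi_F\}
\]
coincide; applying $i$ and using property~(2) then gives $\beta\land L=\beta\land c(F)=\beta\land R$ for all $\beta<\alpha$, and since both $L$ and $R$ are $\le\alpha$ this forces $L=R$. The nontrivial inclusion in the ``sets coincide'' claim (producing, from $\phi\ge\alpha\land\chi_F$, a $\psi\ge\chi_F$ with $\beta\land\psi=\beta\land\phi$) is itself a Urysohn-type gluing, essentially the same construction that appears inside the proof of the previous lemma. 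So the underlying mechanism---a Urysohn interpolation between level sets combined with the $\land$/$\lor$-invariance in~(2)---is common to both arguments; what differs is the packaging. Your argument is more explicit and makes the dependence on the previous lemma visible as a direct invocation of its contrapositive, whereas the paper's version is shorter but leaves the set-equality claim to the reader.
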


\begin{proof}
It is sufficient to observe that for all $0\le\beta<\alpha$ the
sets $\{\beta\land \phi\mid \phi\ge\alpha\land \chi_F\}$ and
$\{\beta\land\psi\mid \psi\ge \chi_F\}$ coincide, therefore by the
previous lemma~:
$$
\beta\land \inf\{i(\phi)\mid \phi\ge\alpha\land \chi_F\}=
\beta\land \inf\{i(\psi)\mid \psi\ge \chi_F\}=
\beta\land \alpha\land \inf\{i(\psi)\mid \psi\ge \chi_F\}.
$$
For the both expressions $\inf\{i(\phi)\mid \phi\ge\alpha\land
\chi_F\}$ and $\alpha\land \inf\{i(\psi)\mid \psi\ge \chi_F\}$ do
not exceed $\alpha$, they are equal.
\end{proof}

\begin{proof}[Proof of the theorem]
It is obvious that Sugeno integral w.r.t. a~capacity satisfies
(1),(2). If $i$ is Sugeno integral w.r.t. some capacity $c$, then
the equality $c(F)=\inf\{i(\psi)\mid \psi\ge \chi_F\}$ must hold
for all $F\subcl X$. To prove the converse, we assume that
$i:C(X,I)\to I$ satisfies (1),(2) and use the latter formula to
define a set function~$c$. It is obvious that the first two
conditions of the definition of capacity hold for $c$. To show
upper semicontinuity, assume that $c(F)<\alpha$ for some $F\subcl
X$, $\alpha\in I$. Then there is a continuous function $\phi:X\to
I$ such that $\phi\ge\chi_F$, $i(\phi)<\alpha$. Let
$i(\phi)<\beta<\alpha$, then
$$
i(\phi)=\beta\land i(\phi)=i(\beta\land\phi)\ge
\beta\land c(\{x\in X\mid \phi(x)\ge\beta\}),
$$
which implies $c(\{x\in X\mid \phi(x)\ge\beta\})<\beta<\alpha$. The
set $U=\{x\in X\mid \phi(x)>\beta\}$ is an open neighborhood of $F$
such that $c(G)<\alpha$ for all $G\subcl X$, $G\subset U$. Thus $c$
is upper semicontinuous and therefore it is a capacity.

The two previous lemmata imply that for any $\phi\in C(X,I)$ we
have
\begin{gather*}
i(\phi)=\sup\{\alpha\in I\mid i(\phi)\ge\alpha\}=
\sup\{\alpha\in I\mid c(\{x\in X\mid
\phi(x)\ge\alpha\})\ge\alpha\}=\\
\sup\{\alpha\land c(\{x\in X\mid \phi(x)\ge\alpha\})\mid\alpha\in I\}=
\jint\limits_X \phi(x)\land dc(x).
\end{gather*}
\end{proof}

\begin{lem}
Let $\phi:X\to I$ be a continuous function. Then the map
$\delta_\phi:MX\to I$ which sends each capacity $c$ to
$\jint\limits_X
\phi(x)\land dc(x)$ is continuous.
\end{lem}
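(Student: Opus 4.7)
The plan is to establish continuity of $\delta_\phi$ by showing that its preimage of every subbasic open set of $I$ is open in $MX$. Since $\{[0,a):a\in I\}\cup\{(b,1]:b\in I\}$ is a subbase for the topology of $I$ and $\{O_-(F,a),O_+(U,a)\}$ is the subbase for $MX$ described in the preliminaries, it suffices to treat these two types of preimages.

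For the upper preimage I claim the identity
$$
\delta_\phi^{-1}((b,1])=O_+(\{x\in X\mid\phi(x)>b\},b).
$$
If $\jint_X\phi(x)\land dc(x)>b$, the supremum defining the Sugeno integral furnishes $\alpha\in I$ with $\alpha\land c(\{x\mid\phi(x)\ge\alpha\})>b$, so $\alpha>b$ and the compactum $\{x\mid\phi(x)\ge\alpha\}$ lies inside the open set $\{x\mid\phi(x)>b\}$ with $c$-value exceeding $b$; the alternative description of $O_+$ recalled in the preliminaries then puts $c$ in the right-hand set. Conversely, a compactum $F\subset\{x\mid\phi(x)>b\}$ with $c(F)>b$ satisfies $\alpha:=\min_{x\in F}\phi(x)>b$ by continuity of $\phi$ and compactness of $F$, and $F\subset\{x\mid\phi(x)\ge\alpha\}$ yields $c(\{x\mid\phi(x)\ge\alpha\})\ge c(F)>b$, whence $\jint_X\phi(x)\land dc(x)>b$.

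For the lower preimage I aim to show
$$
\delta_\phi^{-1}([0,a))=\bigcup_{b<a}O_-(\{x\in X\mid\phi(x)\ge b\},b),
$$
which, being a union of subbase elements, is open. If $\jint_X\phi(x)\land dc(x)<a$, pick $b$ strictly between this integral and $a$; then $b\land c(\{x\mid\phi(x)\ge b\})\le\jint_X\phi(x)\land dc(x)<b$ forces $c(\{x\mid\phi(x)\ge b\})<b$. Conversely, if $c(\{x\mid\phi(x)\ge b\})<b$ for some $b<a$, then by monotonicity of $c$ we have $\alpha\land c(\{x\mid\phi(x)\ge\alpha\})\le c(\{x\mid\phi(x)\ge b\})<b$ for $\alpha\ge b$, while clearly $\alpha\land c(\{x\mid\phi(x)\ge\alpha\})\le\alpha<b$ for $\alpha<b$; taking the supremum over $\alpha\in I$ gives $\jint_X\phi(x)\land dc(x)\le b<a$.

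The one mildly delicate point is the reverse direction of the lower preimage, where the supremum in the Sugeno integral need not be attained; nevertheless the strict inequality $b<a$ lets a non-strict bound $\le b$ on the supremum still yield the desired strict inequality $<a$. Neither direction invokes upper semicontinuity of $c$; only monotonicity of $c$ and the compact-set reformulation of $O_+(U,a)$ from the preliminaries are needed.
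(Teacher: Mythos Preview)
Your proof is correct and follows the same route as the paper: both show continuity by computing the preimages of the subbasic sets $[0,\alpha)$ and $(\alpha,1]$ of $I$. For the upper preimage you obtain precisely the paper's identity $\delta_\phi^{-1}((\alpha,1])=O_+(\phi^{-1}((\alpha,1]),\alpha)$. For the lower preimage the paper asserts the single equality $\delta_\phi^{-1}([0,\alpha))=O_-(\phi^{-1}([\alpha,1]),\alpha)$ (the printed $\phi^{-1}([0;\alpha])$ is evidently a misprint); verifying its backward inclusion tacitly uses upper semicontinuity of $c$, since one must pass from $c(\{\phi\ge\alpha\})<\alpha$ to a strict upper bound on the whole supremum. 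Your variant expresses the same preimage as $\bigcup_{b<a}O_-(\{\phi\ge b\},b)$, which is open as a union of subbase elements and whose verification, as you note, needs only monotonicity of $c$; the trade-off is a slightly longer argument in exchange for not invoking upper semicontinuity.
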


\begin{proof}
Observe that
$$
\delta_\phi{}^{-1}(([0;\alpha))=
O_-(\phi^{-1}([0;\alpha]),\alpha),
\quad
\delta_\phi{}^{-1}(((\alpha;1])=
O_+(\phi^{-1}((\alpha;1]),\alpha)
$$
for all $\alpha\in I$.
\end{proof}

\begin{cons}
The map $X\to I^{C(X,I)}$ which sends each capacity $c$ on $X$ to
$(\delta_\phi(c))_{\phi\in C(X,I)}$ is an embedding.
\end{cons}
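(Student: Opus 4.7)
The plan is to verify the three defining properties of an embedding for the map $c\mapsto(\delta_\phi(c))_{\phi\in C(X,I)}$: continuity, injectivity, and agreement of the topology of $MX$ with the weak topology pulled back from $I^{C(X,I)}$.

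Continuity is immediate from the preceding lemma together with the universal property of the product topology, since each coordinate $\delta_\phi$ is continuous on $MX$. Injectivity follows from Theorem~\ref{th.char-sugeno-i}: if $\delta_\phi(c_1)=\delta_\phi(c_2)$ for all $\phi\in C(X,I)$, then the two capacities determine the same functional on $C(X,I)$, and such a functional corresponds to a unique capacity.

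The substantive step is to show that each element of the standard subbase of $MX$ is open in the weak topology generated by $\{\delta_\phi\}$. For $O_-(F,a)$, the recovery formula $c(F)=\inf\{i(\psi)\mid \psi\ge\chi_F\}$ used in the proof of Theorem~\ref{th.char-sugeno-i} yields
\[
O_-(F,a)=\bigcup\{\delta_\psi^{-1}([0;a))\mid \psi\in C(X,I),\ \psi\ge\chi_F\}.
\]
For $O_+(U,a)$ the argument goes in two steps. First, if $\phi\le\chi_U$ and $\alpha>0$, the super-level set $\{\phi\ge\alpha\}$ is a closed subset of $U$, so $\delta_\phi(c)>a$ forces $c(\{\phi\ge\alpha\})>a$ for some $\alpha>a$, whence $c(U)>a$. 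Second, given $c\in O_+(U,a)$ with $a<1$, the second description of $O_+(U,a)$ in the excerpt furnishes a compactum $K\subset U$ with $c(K)>a$; Urysohn's lemma applied to $K$ and $X\setminus U$ produces $\phi\in C(X,I)$ with $\phi|_K\equiv 1$ and $\phi|_{X\setminus U}\equiv 0$, so $\phi\le\chi_U$ and, for any $\alpha\in(a,1)$, $\delta_\phi(c)\ge\alpha\land c(\{\phi\ge\alpha\})>a$. Therefore
\[
O_+(U,a)=\bigcup\{\delta_\phi^{-1}((a;1])\mid \phi\in C(X,I),\ \phi\le\chi_U\},
\]
again open in the weak topology.

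The only mildly delicate point is the $O_+(U,a)$ case, where one must combine the regularity of $c$ on open sets with Urysohn's lemma to produce a continuous bump function whose super-level sets are trapped inside $U$. Once both subbases are lifted to the weak topology, the map is a continuous injection that is simultaneously a homeomorphism onto its image, i.e.\ an embedding.
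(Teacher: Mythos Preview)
Your argument is correct, but it does more work than necessary and differs from the paper's intended route. The corollary is placed in the section on compacta, where $MX$ is already known to be a compact Hausdorff space. The paper gives no explicit proof because none is needed beyond the two ingredients you cite at the start: continuity of each $\delta_\phi$ (the preceding lemma) and injectivity (Theorem~\ref{th.char-sugeno-i}). A continuous injection from a compact space to a Hausdorff space is automatically a closed map, hence a homeomorphism onto its image. That is the whole proof.

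Your direct verification that the subbasic sets $O_-(F,a)$ and $O_+(U,a)$ are open in the weak topology is a genuinely different approach. It avoids invoking compactness of $MX$ and instead works explicitly with the Sugeno-integral recovery formula and Urysohn functions. This is more laborious but has the advantage of being self-contained at the level of the subbase; it would also transfer more readily to non-compact settings where the automatic ``compact-to-Hausdorff'' shortcut fails (which is indeed the spirit of the later Tychonoff sections, e.g.\ Proposition~2.7 for $\check GX$). In the compact context of this corollary, however, the paper's one-line argument is both shorter and standard.
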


Recall that its image consists of all monotonic functionals from
$C(X,I)$ to $I$ which satisfy (1),(2). Therefore from now on we
identify each capacity and the respective functional. By the latter
statement the topology on $MX$ can be equivalently defined as
weak${}^*$ topology using Sugeno integral instead of Choquet
integral. We also write $c(\phi)$ for $\jint\limits_X
\phi(x)\land dc(x)$.

The following observation is a trivial ``continuous'' version of
Theorem~6.5~\cite{Nyk:SugInTop:0X}.

\begin{stat}
Let $C\in MX$ and $\phi\in C(X,I)$. Then $\mu_M
X(C)(\phi)=C(\delta_\phi)$.
\end{stat}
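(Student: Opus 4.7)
The plan is to show both sides satisfy the same level-set characterization, namely $\mu_M X(C)(\phi)\ge\alpha$ iff $C(\{c\in MX\mid c(\{\phi\ge\alpha\})\ge\alpha\})\ge\alpha$, and similarly for $C(\delta_\phi)\ge\alpha$. Since both quantities lie in $I$, agreement at every level $\alpha$ forces equality.

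The technical heart of the argument is the following auxiliary fact: for any capacity $d$ on a compactum $Y$ and any $\psi\in C(Y,I)$, one has $d(\psi)\ge\alpha$ iff $d(\{\psi\ge\alpha\})\ge\alpha$. The ``$\Leftarrow$'' direction is immediate from the Sugeno formula. For ``$\Rightarrow$'', unfolding the supremum gives either some $\gamma\ge\alpha$ with $d(\{\psi\ge\gamma\})\ge\gamma$, in which case monotonicity of $d$ suffices, or a net $\gamma_n\nearrow\alpha$ with $d(\{\psi\ge\gamma_n\})\ge\gamma_n$. In the latter case, continuity of $\psi$ together with compactness of $Y$ makes $\{\psi\ge\gamma_n\}$ converge to $\{\psi\ge\alpha\}$ in the upper Vietoris topology on $\exp Y$, and the upper semicontinuity clause in the definition of capacity then yields $d(\{\psi\ge\alpha\})\ge\limsup_n d(\{\psi\ge\gamma_n\})\ge\alpha$.

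Applying this fact with $Y=X$, $d=c$, $\psi=\phi$ identifies $\{c\in MX\mid \delta_\phi(c)\ge\alpha\}$ with $\{c\mid c(\{\phi\ge\alpha\})\ge\alpha\}$, and applying it a second time with $Y=MX$, $d=C$, $\psi=\delta_\phi$ (continuous by the preceding lemma) produces the desired characterization of $C(\delta_\phi)$. For the left-hand side, I would apply the fact once more with $d=\mu_M X(C)$ and $\psi=\phi$, obtaining $\mu_M X(C)(\phi)\ge\alpha$ iff $\mu_M X(C)(\{\phi\ge\alpha\})\ge\alpha$; expanding the latter via the definition of $\mu_M$ and an entirely analogous semicontinuity passage (as $\beta_n\nearrow\beta$, the closed sets $\{c\mid c(\{\phi\ge\alpha\})\ge\beta_n\}$ converge in upper Vietoris on $\exp MX$ to $\{c\mid c(\{\phi\ge\alpha\})\ge\beta\}$, using compactness of $MX$ and upper semicontinuity of $c\mapsto c(\{\phi\ge\alpha\})$; USC of $C$ then transfers the inequality to the limiting $\beta$) reduces it to $C(\{c\mid c(\{\phi\ge\alpha\})\ge\alpha\})\ge\alpha$.

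The main obstacle lies precisely in these two semicontinuity passages---once on $X$ for the Sugeno-representation step and once on $MX$ for the $\mu_M$-unfolding step---each requiring that a decreasing family of sublevel-type closed sets converge in the upper Vietoris topology to its intersection. Compactness of the ambient space is essential in both instances; once that convergence is established, the USC axiom built into the notion of capacity closes the argument and the common characterization forces $\mu_M X(C)(\phi)=C(\delta_\phi)$.
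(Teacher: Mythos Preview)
Your proposal is correct and follows essentially the same route as the paper. The paper's proof is a single sentence---``the both sides are greater or equal than $\alpha\in I$ if and only if $C\{c\in MX\mid c(\phi)\ge\alpha\}\ge\alpha$''---which relies on the equivalence $d(\psi)\ge\alpha\iff d(\{\psi\ge\alpha\})\ge\alpha$ having already been established in the proof of Theorem~\ref{th.char-sugeno-i}; you simply re-derive that equivalence explicitly via the upper-semicontinuity/$\tau$-smoothness argument and then apply it in the same places.
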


\begin{proof}
Indeed, the both sides are greater or equal than $\alpha\in I$ if
and only if $C\{c\in MX\mid c(\phi)\ge\alpha\}\ge\alpha$.
\end{proof}

It is also easy to see that $\eta_M X(x)(\phi)=\phi(x)$ for all
$x\in X$, $\phi\in C(X,I)$. Thus we have obtained a description of
the capacity monad $\BBM$ in terms of functionals which is a
complete analogue of the description of the probability monad
$\BBP$~\cite{Fed:FuncProbMeasTopCat:98,TZ:CTCHS:99}. Now we can
easily reprove the continuity of $\eta_M X$ and $\mu_M X$, as well
as the fact that $\BBM=(M,\eta_M,\mu_M)$ is a monad.

\section{Extensions of the capacity functor and the capacity monad
to the category of Tychonoff spaces}

We will extend the definition of capacity to Tychonoff spaces. A
function $c:\exp X\cup\{\ES\}\to I$ is called a~\emph{regular
capacity} on a Tychonoff space $X$ if it is monotonic, satisfies
$c(\ES)=0$, $c(X)=1$ and the following property of
\emph{upper semicontinuity} or \emph{outer regularity}~:
if $F\subcl X$ and $c'(F)<\alpha$, $\alpha\in I$, then there is an
open set $U\supset F$ in $X$ such that $F$ and $X\setminus U$ are
completely separated, and $c'(G)<\alpha$ for all $G\subset U$,
$G\subcl X$.


This definition implies that each closed set $F$ is contained in
some zero-set $Z$ such that $c(F)=c(Z)$.

Each capacity $c$ on any compact space $Y$ satisfies also the
property which is called $\tau$\emph{-smoothness} for additive
measures and have two slightly different
formulations~\cite{Ban:TopSpProbMeas-I:95,Var:MeasOnTopSp:61}.
Below we show that they are equivalent for Tychonoff spaces.
\begin{lem}
Let $X$ be a Tychonoff space and $m:\exp X\cup\{\ES\}\to I$
a~monotonic function. Then the two following statements are
equivalent~:

(a) for each monotonically decreasing net $(F_\alpha)$ of closed
sets in $X$ and a closed set $G\subset X$, such that
$\bigcap\limits_{\alpha}F_\alpha\subset G$, the inequality
$\inf\limits_\alpha c(F_\alpha)\le c(G)$ is valid;

(b) for each monotonically decreasing net $(Z_\alpha)$ of zero-sets
in $X$ and a closed set $G\subset X$, such that
$\bigcap\limits_{\alpha}Z_\alpha\subset G$, the inequality
$\inf\limits_\alpha c(Z_\alpha)\le c(G)$ is valid.
\end{lem}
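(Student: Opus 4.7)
The direction (a) $\Rightarrow$ (b) should be immediate, since every zero-set is closed and so the nets considered in (b) form a subclass of those in (a). For the converse, my plan is to replace the given decreasing net of closed sets by a cofinal family of enclosing zero-sets having the same intersection, apply (b) to that family, and transfer the conclusion back by monotonicity.

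The plan rests on two elementary facts about Tychonoff spaces. First, every closed set $F\subset X$ equals the intersection of all zero-sets containing $F$: for each $x\notin F$ there is a continuous $f\colon X\to I$ with $f|_F\equiv 1$ and $f(x)=0$, giving a zero-set $(1-f)^{-1}(0)\supset F$ that excludes $x$. Second, a finite intersection of zero-sets is again a zero-set: if $Z_i=f_i^{-1}(0)$ with $f_i\colon X\to I$ continuous, then $Z_1\cap Z_2=(f_1\lor f_2)^{-1}(0)$.

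With these tools, given a decreasing net $(F_\alpha)_{\alpha\in A}$ of closed subsets of $X$ with $\bigcap_\alpha F_\alpha\subset G$, I would let $\mathcal{Z}$ be the family of all zero-sets $Z\subset X$ such that $Z\supset F_\alpha$ for some $\alpha\in A$, ordered by reverse inclusion. Since $(F_\alpha)$ is downward directed and finite intersections of members of $\mathcal{Z}$ remain in $\mathcal{Z}$, this family is a directed set, so $(Z)_{Z\in\mathcal{Z}}$ is a monotonically decreasing net of zero-sets in $X$. By the first fact, each $F_\alpha$ equals the intersection of the zero-sets in $\mathcal{Z}$ containing it, hence $\bigcap_{Z\in\mathcal{Z}}Z=\bigcap_\alpha F_\alpha\subset G$, and (b) yields $\inf_{Z\in\mathcal{Z}}c(Z)\le c(G)$. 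Monotonicity then gives $c(Z)\ge c(F_\alpha)\ge \inf_\alpha c(F_\alpha)$ whenever $Z\supset F_\alpha$, so passing to the infimum over $\mathcal{Z}$ produces $\inf_\alpha c(F_\alpha)\le \inf_{Z\in\mathcal{Z}}c(Z)\le c(G)$, which is (a).

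No step looks substantially difficult; the only delicate check is that $\mathcal{Z}$ is genuinely directed and that $\bigcap_{Z\in\mathcal{Z}}Z=\bigcap_\alpha F_\alpha$, both being routine consequences of the two facts listed above together with the hypothesis that $(F_\alpha)$ is a decreasing net.
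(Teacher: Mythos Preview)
Your proposal is correct and follows essentially the same approach as the paper. Both arguments replace the decreasing net of closed sets by a directed family of enclosing zero-sets with the same intersection, apply (b), and conclude via monotonicity; the only cosmetic difference is that the paper indexes its zero-set net by finite subsets of pairs $(F_\alpha,a)$ with $a\notin F_\alpha$, whereas you index directly by the collection $\mathcal{Z}$ of all zero-sets containing some $F_\alpha$ under reverse inclusion.
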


\begin{proof}
It is obvious that (a) implies (b). Let (b) hold, and let a net
$(F_\alpha)$ and a set $G$ satisfy the conditions of (a). We denote
the set of all pairs $(F_\alpha,a)$ such that $a\in X\setminus
F_\alpha$ by $A$, and let $\Gamma$ be the set of all non-empty
finite subsets of $A$. The space $X$ is Tychonoff, hence for each
pair $(F_\alpha,a)\in A$ there is a zero-set $Z_{\alpha,a}\supset
F_\alpha$ such that $Z_{\alpha,a}\not\ni a$. For
$\gamma=\{(\alpha_1,a_1),\dots,(\alpha_k,a_k)\}\in \Gamma$ we put
$Z_\gamma=Z_{\alpha_1,a_1}\cap\dots\cap Z_{\alpha_k,a_k}$. If
$\Gamma$ is ordered by inclusion, then $(Z_\gamma)_{\gamma\in
\Gamma}$ is a monotonically decreasing net such that
$\bigcap\limits_{\gamma\in\Gamma}Z_\gamma=
\bigcap\limits_{\alpha}F_\alpha\subset G$, thus
$\inf\limits_\alpha c(F_\alpha)\le \inf
\limits_{\gamma\in\Gamma}Z_\gamma\le c(G)$, and (a) is valid.
\end{proof}

We call a function $c:\exp X\to I$ a~$\tau$\emph{-smooth capacity}
if it is monotonic, satisfies $c(\ES)=0$, $c(X)=1$ and any of the
two given above equivalent properties of $\tau$-smoothness. It is
obvious that each $\tau$-smooth capacity is a regular capacity, but
the converse is false. E.g. the function $c:\exp
\BBN\cup\{\ES\}\to I$ which is defined by the formulae $c(\ES)=0$,
$c(F)=1$ as $F\subset \BBN$, $F\ne \ES$, is a regular capacity that
is not $\tau$-smooth. For compacta the two classes coincide.

>From now all capacities are $\tau$-smooth, if otherwise is not
specified.

Now we show that capacities on a Tychonoff space $X$ can be
naturally identified with capacities with a certain property on the
Stone-\v Cech compactification $\beta X$.

\begin{lem}
Let $c$ be a capacity on $\beta X$. Then the following statements
are equivalent~:

(1) for each closed sets $F,G\subset \beta X$ such that $F\cap
X\subset G$, the inequality $c(F)\le c(G)$ is valid;

(2) for each monotonically decreasing net $(\phi_\gamma)$ of
continuous functions $\beta X\to I$ and a continuous function
$\psi:\beta X\to I$ such that $\inf\limits_\gamma
\phi_\gamma(x)\le \psi(x)$ for all $x\in X$, the inequality
$\inf\limits_\gamma c(\phi_\gamma)\le c(\psi)$ is valid.
\end{lem}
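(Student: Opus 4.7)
My plan is to prove the two implications separately, using throughout the Sugeno-integral characterization $c(\phi)=\sup\{\alpha\in I\mid c(\{\phi\ge\alpha\})\ge\alpha\}$ established in Section~3, together with the fact that each capacity on the compactum $\beta X$ is automatically $\tau$-smooth (any open neighbourhood of a decreasing intersection $\bigcap F_\gamma$ contains some $F_\gamma$ by compactness, so upper semicontinuity of $c$ gives $\inf_\gamma c(F_\gamma)\le c(\bigcap_\gamma F_\gamma)$).

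For the direction $(1)\Rightarrow(2)$, I would first upgrade the sup-formula to the equivalence
\[
c(\phi)\ge\alpha \iff c(\{\phi\ge\alpha\})\ge\alpha \qquad (\alpha\in(0,1]),
\]
where the nontrivial direction uses $\{\phi\ge\alpha\}=\bigcap_{\epsilon>0}\{\phi\ge\alpha-\epsilon\}$ together with $\tau$-smoothness. Given the data in~(2), set $\alpha=\inf_\gamma c(\phi_\gamma)$ (the case $\alpha=0$ being trivial). Then the decreasing net $F_\gamma=\{\phi_\gamma\ge\alpha\}$ of closed subsets of $\beta X$ satisfies $c(F_\gamma)\ge\alpha$, and by $\tau$-smoothness the intersection $F^*=\bigcap_\gamma F_\gamma$ has $c(F^*)\ge\alpha$. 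The hypothesis $\inf_\gamma\phi_\gamma\le\psi$ on $X$ yields $F^*\cap X\subset\{\psi\ge\alpha\}$, so~(1) gives $c(\{\psi\ge\alpha\})\ge c(F^*)\ge\alpha$, hence $c(\psi)\ge\alpha$.

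For $(2)\Rightarrow(1)$, fix closed $F,G\subset\beta X$ with $F\cap X\subset G$ and $\epsilon>0$. Using outer regularity I would choose open $W\supset G$ with $c(G')<c(G)+\epsilon$ for every closed $G'\subset W$, and then use normality of $\beta X$ to produce a continuous $\psi\colon\beta X\to I$ with $\psi|_G\equiv 1$ and $\psi|_{\beta X\setminus W}\equiv 0$; since every level set $\{\psi\ge\alpha\}$ with $\alpha>0$ lies in $W$, this gives $c(\psi)\le c(G)+\epsilon$. For the decreasing net, I would index by the directed set $\Gamma$ of finite subsets of the collection $\mathcal{N}$ of open neighbourhoods of $F$, ordered by inclusion: for each $V\in\mathcal{N}$ pick by Urysohn a continuous $\eta_V\colon\beta X\to I$ with $\eta_V|_F\equiv 1$ and $\eta_V|_{\beta X\setminus V}\equiv 0$, and set $\phi_\gamma=\min\{\eta_V\mid V\in\gamma\}$. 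Then $(\phi_\gamma)$ is manifestly decreasing and $\inf_\gamma\phi_\gamma=\chi_F$ pointwise (for $x\notin F$, normality supplies some $\eta_V$ with $\eta_V(x)=0$). Thus on $X$ we have $\inf_\gamma\phi_\gamma\le\chi_{F\cap X}\le\chi_G\le\psi$, so~(2) applies; since $\phi_\gamma|_F\equiv 1$ gives $c(\phi_\gamma)\ge c(F)$, we conclude $c(F)\le\inf_\gamma c(\phi_\gamma)\le c(\psi)\le c(G)+\epsilon$, and letting $\epsilon\to 0$ finishes the proof.

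The main obstacle will be the $(2)\Rightarrow(1)$ direction: a net indexed directly by $\mathcal{N}$ is not monotone, so passing to finite subsets of $\mathcal{N}$ and taking minima of Urysohn functions is essential in order to be able to invoke~(2). The other delicate point is the combined use of outer regularity of $c$ and normality of $\beta X$ to pin down a single continuous $\psi$ whose Sugeno integral approximates $c(G)$ from above.
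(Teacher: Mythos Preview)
Your proof is correct and, for $(1)\Rightarrow(2)$, essentially identical to the paper's: both arguments threshold at a single level $\alpha$, use $\tau$-smoothness of $c$ on the compactum $\beta X$ to pass to the intersection of the level sets $\{\phi_\gamma\ge\alpha\}$, and then invoke~(1). You phrase it directly, the paper phrases it contrapositively, but the content is the same.

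For $(2)\Rightarrow(1)$ the two arguments diverge slightly in bookkeeping, though not in substance. The paper takes as its decreasing net the \emph{entire} family $\mathcal{F}=\{\phi\in C(\beta X,I)\mid \phi|_F\equiv 1\}$, ordered by reverse pointwise order; this family is already directed (being closed under pointwise minimum), so no auxiliary indexing by finite sets of neighbourhoods is needed. On the $G$-side the paper also avoids your $\epsilon$-argument: rather than fixing an open $W\supset G$ and a single $\psi$, it lets $\psi$ range over all continuous functions with $\psi|_G\equiv 1$ and uses the formula $c(G)=\inf\{c(\psi)\mid\psi\ge\chi_G\}$ from the proof of Theorem~\ref{th.char-sugeno-i} directly. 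Thus the paper obtains $c(F)=\inf_{\phi\in\mathcal{F}}c(\phi)\le c(\psi)$ for every such $\psi$, and taking the infimum over $\psi$ gives $c(F)\le c(G)$ at once. Your construction via Urysohn functions and finite minima, together with the outer-regularity $\epsilon$-trick, achieves the same thing with a little more scaffolding; neither approach is wrong, but the paper's is shorter because it exploits the lattice structure of $C(\beta X,I)$ and the integral characterization of $c$ on closed sets more fully.
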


\begin{proof}
(1)$\implies$(2). Let $c(\psi)<\alpha$, $\alpha\in I$, then
$c(Z_0)<\alpha$ for the closed set $Z_0=\{x\in \beta X\mid
\psi(x)\ge\alpha\}$. The intersection $Z$ of the closed sets
$Z_\gamma=\{x\in \beta X\mid \phi_\gamma(x)\ge\alpha\}$ satisfies
the inclusion $Z\cap X\subset Z_0$, hence by (1): $c(Z)\le c(Z_0)$.
Due to $\tau$-smoothness of $c$ we obtain $\inf\limits_\gamma
c(Z_\gamma)\le c(Z)$. Therefore there exists an index $\gamma$ such
that $c(\{x\in\beta X\mid\phi_\gamma(x)\ge\alpha\})<\alpha$, thus
$c(\phi_\gamma)<\alpha$, and $\inf\limits_\gamma
c(\phi_\gamma)<\alpha$, which implies the required inequality.

(2)$\implies$(1). Let a continuous function $\psi:\beta X\to I$ be
such that $\psi|_G=1$. Denote the set of all continuous functions
$\phi:\beta X\to I$ such that $\phi|_F\equiv 1$ by $\CCF$. We
consider the order on $\CCF$ which is reverse to natural:
$\phi\prec\phi'$ if $\phi\ge\phi'$, then the collection $\CCF$ can
be regarded as a monotonically decreasing net such that
$(\phi(x))_{\phi\in\CCF}$ converges to $1$ for all $x\in X\cap G$,
and to $0$ for all $x\in X\setminus G$. Therefore
$\inf\limits_{\phi\in\CCF}\phi(x)\le
\psi(x)$ for all $x\in X$, hence, by the assumption:
$\inf\limits_{\phi\in\CCF}c(\phi)\le c(\psi)$. Thus
\begin{multline*}
\inf\{c(\phi)\mid \phi:\beta X\to I\text{ is continuous},\phi|_F\equiv 1\}
\le
\\
\inf\{c(\psi)\mid \psi:\beta X\to I\text{ is continuous},\psi|_G\equiv
1\},
\end{multline*}
i.e. $c(F)\le c(G)$.
\end{proof}

We define the set of all $c\in M\beta X$ that satisfy (1)$\iff$(2)
by $M^*X$.

Condition (1) implies that, if closed sets $F,G\subset \beta X$ are
such that $F\cap X=G\cap X$, then $c(F)=c(G)$. Therefore we can
define a set function $\check c:\exp X\cup\{\ES\}\to I$ as
follows~: if $A\subcl X$, then $\check c(A)=c(F)$ for any set
$F\subcl\beta X$ such that $F\cap X= A$. Obviously $\check
c(A)=\inf\{c(\psi)\mid \psi\in C(\beta X,I), \psi\ge\chi_A\}$.

The following observation, although almost obvious, is a crucial
point in our exposition.
\begin{stat}
A set function $c':\exp X\cup\{\ES\}\to I$ is equal to $\check c$
for some $c\in M^*X$ if and only if $c'$ is a $\tau$-smooth
capacity on $X$.
\end{stat}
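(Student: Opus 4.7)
\emph{Plan.} I would prove the two implications separately, using $\check c(A)=c(\Cl_{\beta X}A)$ for the forward direction and the functional representation of Theorem~\ref{th.char-sugeno-i} for the reverse.

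For the forward direction, assume $c\in M^*X$. Normalization ($\check c(\ES)=0$, $\check c(X)=1$) and monotonicity follow at once from the corresponding properties of $c$ on $\beta X$. For outer regularity, given $\check c(A)<\alpha$, upper semicontinuity of $c$ at $\Cl_{\beta X}A$ yields an open $V\subop\beta X$ containing $\Cl_{\beta X}A$ with $c(H)<\alpha$ for every closed $H\subset V$; normality of $\beta X$ then supplies a Urysohn function $\tilde\varphi:\beta X\to I$ that is $1$ on $\Cl_{\beta X}A$ and $0$ on $\beta X\setminus V$. Setting $U=\tilde\varphi^{-1}((1/2,1])\cap X$, the restriction $\tilde\varphi|_X$ completely separates $A$ from $X\setminus U$, and any closed $G\subset U$ in $X$ satisfies $\Cl_{\beta X}G\subset V$, so $\check c(G)<\alpha$. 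For $\tau$-smoothness, let $(A_\gamma)$ be a decreasing net of closed sets in $X$ with $\bigcap_\gamma A_\gamma\subset B$. Putting $F_\gamma=\Cl_{\beta X}A_\gamma$, we have $\bigcap_\gamma F_\gamma\cap X=\bigcap_\gamma A_\gamma\subset B$, so property (1) of $M^*X$ gives $c(\bigcap_\gamma F_\gamma)\le c(\Cl_{\beta X}B)=\check c(B)$; combined with $\tau$-smoothness of $c$ on the compactum $\beta X$ (a standard consequence of upper semicontinuity plus compactness), this yields $\inf_\gamma\check c(A_\gamma)=\inf_\gamma c(F_\gamma)\le\check c(B)$.

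For the reverse direction, given a $\tau$-smooth capacity $c'$ on $X$, I would construct the corresponding $c\in M^*X$ via its Sugeno-integral functional. Define
$$i:C(\beta X,I)\to I,\qquad i(\varphi)=\sup\{\,\alpha\land c'(\{x\in X\mid\varphi(x)\ge\alpha\})\mid\alpha\in I\,\}.$$
Monotonicity is immediate; the identities $i(\alpha\land\varphi)=\alpha\land i(\varphi)$ and $i(\alpha\lor\varphi)=\alpha\lor i(\varphi)$ reduce to a routine comparison of superlevel sets. By Theorem~\ref{th.char-sugeno-i} there is then a unique $c\in M\beta X$ with $c(\varphi)=i(\varphi)$ for every $\varphi\in C(\beta X,I)$, and with $c(F)=\inf\{i(\varphi)\mid\varphi\ge\chi_F\}$ for $F\subcl\beta X$.

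To show that $c\in M^*X$, I would verify characterization (2) of the preceding lemma. The key observation is the equivalence
$$i(\varphi)<\beta\iff c'(\{\varphi|_X\ge\beta\})<\beta,$$
which follows directly from the definition of $i$ by comparing $\beta$ with the Sugeno sup. Given a decreasing net $(\varphi_\gamma)$ in $C(\beta X,I)$ and $\psi\in C(\beta X,I)$ with $\inf_\gamma\varphi_\gamma(x)\le\psi(x)$ for $x\in X$, and any $\beta>c(\psi)$, the closed sets $B_\gamma=\{\varphi_\gamma|_X\ge\beta\}$ form a decreasing net in $X$ with $\bigcap_\gamma B_\gamma\subset\{\psi|_X\ge\beta\}$; the latter set has $c'$-value below $\beta$, so $\tau$-smoothness of $c'$ produces some $\gamma^*$ with $c'(B_{\gamma^*})<\beta$, i.e.\ $c(\varphi_{\gamma^*})<\beta$. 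The identity $\check c=c'$ then follows by computing, for $A\subcl X$, $\check c(A)=c(\Cl_{\beta X}A)=\inf\{i(\varphi)\mid\varphi\in C(\beta X,I),\ \varphi|_A=1\}$: the bound $\check c(A)\ge c'(A)$ comes from $\{\varphi\ge c'(A)\}\cap X\supset A$, while outer regularity of $c'$ supplies, for each $\varepsilon>0$, a separating $f:X\to I$ with $f|_A=1$, $f|_{X\setminus U}=0$, and $c'(G)<c'(A)+\varepsilon$ on closed $G\subset U$; its $\beta X$-extension $\tilde f$ satisfies $i(\tilde f)\le c'(A)+\varepsilon$. The main obstacle is the verification $c\in M^*X$: the level-set equivalence is exactly what translates the required $\tau$-smoothness statement on $\beta X$ into the $\tau$-smoothness hypothesis on $X$ applied to the explicit family $(B_\gamma)$.
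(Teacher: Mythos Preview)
The paper does not actually prove this proposition; it is introduced as ``almost obvious'' and stated without argument. Your proposal supplies a correct and complete proof, so there is no real comparison of approaches to make.

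One small point worth tightening: the equivalence
\[
i(\varphi)<\beta\iff c'(\{\varphi|_X\ge\beta\})<\beta
\]
does \emph{not} follow ``directly from the definition of $i$'' alone. The implication $\Rightarrow$ is immediate (take $\alpha=\beta$ in the supremum), but the implication $\Leftarrow$ needs the $\tau$-smoothness of $c'$: from $c'(\{\varphi|_X\ge\beta\})<\beta$ one applies $\tau$-smoothness to the decreasing family $\{\varphi|_X\ge\alpha\}$ as $\alpha\nearrow\beta$ (whose intersection is $\{\varphi|_X\ge\beta\}$) to obtain some $\alpha_0<\beta$ with $c'(\{\varphi|_X\ge\alpha_0\})<\beta$, and only then does the supremum defining $i(\varphi)$ stay strictly below $\beta$. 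Since $\tau$-smoothness is exactly your hypothesis, the argument is valid; just adjust the justification. Similarly, your appeal to ``outer regularity of $c'$'' in the final paragraph is legitimate because the paper notes earlier that every $\tau$-smooth capacity is automatically a regular capacity, but it would be cleaner to cite that remark explicitly.
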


Therefore we define the set of all capacities on $X$ by $\check MX$
and identify it with the subset $M^* X\subset M\beta X$. We obtain
an injective map $e_MX:\check MX\to M\beta X$, and from now on we
assume that a topology on $\check MX$ is such that $e_MX$ is an
embedding. Thus $\check MX$ for a Tychonoff $X$ is Tychonoff as
well.

If $c$ is a capacity on $X$ and $\phi:X\to I$ is a continuous
function, we define the Sugeno integral of $\phi$ w.r.t. $c$ by the
usual formula~:
$$
c(\phi)=\jint\limits_X \phi(x)\land dc(x)=
\sup\{\alpha\land c\{x\in X\mid
\phi(x)\ge\alpha\}\mid
\alpha\in I\}.
$$

For any continuous function $\phi:X\to I$ we denote by $\beta\phi$
its Stone-\v Cech compactification, i.e. its unique continuous
extension to a function $\beta X\to I$.

\begin{stat}
Let $c\in M^*X$ and $\check c$ is defined as above. Then for any
continuous function $\phi:X\to I$ we have $\check
c(\phi)=c(\beta\phi)$.
\end{stat}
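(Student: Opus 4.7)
The plan is to compare the two Sugeno integrals level by level, using the sup-formula definition and the defining property of $\check c$. The identity is essentially a direct consequence of the fact that $\beta\phi$ restricts to $\phi$ on $X$.

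First I would write out both sides explicitly:
\begin{gather*}
\check c(\phi)=\sup\{\alpha\land \check c(\phi^{-1}([\alpha;1]))\mid\alpha\in I\},\\
c(\beta\phi)=\sup\{\alpha\land c((\beta\phi)^{-1}([\alpha;1]))\mid\alpha\in I\}.
\end{gather*}
Both superlevel sets are closed because $\phi$ and $\beta\phi$ are continuous, so the integrals are well-defined.

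The key step is to show that for every $\alpha\in I$, we have the level-by-level equality
$$\check c(\phi^{-1}([\alpha;1]))=c((\beta\phi)^{-1}([\alpha;1])).$$
Since $\beta\phi$ extends $\phi$, the closed set $F_\alpha:=(\beta\phi)^{-1}([\alpha;1])\subcl\beta X$ satisfies
$F_\alpha\cap X=\phi^{-1}([\alpha;1])$.
By the very definition of $\check c$ (``$\check c(A)=c(F)$ for any $F\subcl\beta X$ with $F\cap X=A$''), applied to $A=\phi^{-1}([\alpha;1])$ and the witness $F=F_\alpha$, the desired equality follows. Here I use that $c\in M^*X$, so condition~(1) of the preceding lemma guarantees the value $c(F)$ does not depend on the choice of closed extension $F$, making $\check c$ well-defined.

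Taking the supremum over $\alpha\in I$ of the equal quantities $\alpha\land\check c(\phi^{-1}([\alpha;1]))=\alpha\land c((\beta\phi)^{-1}([\alpha;1]))$ yields $\check c(\phi)=c(\beta\phi)$. There is no real obstacle here; the only point requiring attention is the observation that $F_\alpha$ is an admissible closed extension of the $X$-level set, which is immediate from continuity of $\beta\phi$ and the extension property $\beta\phi|_X=\phi$.
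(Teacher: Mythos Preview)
Your argument is correct and is exactly the paper's approach: the paper's proof consists of the single observation that $\check c(\{x\in X\mid \phi(x)\ge\alpha\})=c(\{x\in \beta X\mid \beta\phi(x)\ge\alpha\})$, which is precisely your level-by-level equality. You have simply spelled out in detail why that observation holds (via $F_\alpha\cap X=\phi^{-1}([\alpha;1])$ and the definition of $\check c$), which the paper leaves implicit.
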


\begin{proof}
It is sufficient to observe that
$$
\check c(\{x\in X\mid
\phi(x)\ge\alpha\})=c(\{x\in \beta X\mid
\beta\phi(x)\ge\alpha\}).
$$
\end{proof}

Thus the topology on $\check MX$ can be equivalently defined as the
weak${}^*$-topology using Sugeno integral. It also immediately
implies that the following theorem is valid.

\begin{theo}\label{th.char-sugeno-tych}
Let $X$ be a Tychonoff space, $c$ a~capacity on $X$. Then the
functional $i:C(X,I)\to I$, $i(\phi)=\jint_X
\phi(x)\land dc(x)$ for $\phi\in C(X,I)$, has the following properties~:
\begin{enumerate}
\item for all $\phi,\psi\in C(X,I)$ the inequality $\phi\le\psi$ (i.e.\
$\phi(x)\le \psi(x)$ for all $x\in X$) implies $i(\phi)\le i(\psi)$
($i$ is \emph{monotonic});
\item $i$
satisfies the equalities $i(\alpha\land\phi)=\alpha\land i(\phi)$,
$i(\alpha\lor\phi)=\alpha\lor i(\phi)$ for any $\alpha\in I$,
$\phi\in C(X,I)$;
\item
for each monotonically decreasing net $(\phi_\alpha)$ of continuous
functions $X\to I$ and a continuous function $\psi:X\to I$ such
that $\inf\limits_\alpha
\phi_\alpha(x)\le \psi(x)$ for all $x\in X$, the inequality
$\inf\limits_\alpha i(\phi_\alpha)\le i(\psi)$ is valid.
\end{enumerate}

Conversely, any functional $i:C(X,I)\to I$ satisfying (1)--(3) has
the form $i(\phi)=\jint_X\phi(x)\land dc(x)$ for a uniquely
determined capacity $c\in \check MX$.
\end{theo}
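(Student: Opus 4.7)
My plan is to bootstrap from the compact case (Theorem~\ref{th.char-sugeno-i}) using the correspondence $\check MX \cong M^*X \subset M\beta X$ together with the canonical bijection $C(X,I)\leftrightarrow C(\beta X,I)$ given by $\phi\mapsto\beta\phi$.

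For the forward direction, suppose $c\in\check MX$, viewed as an element of $M^*X\subset M\beta X$. By the statement immediately preceding the theorem, $i(\phi)=\check c(\phi)=c(\beta\phi)$ for every $\phi\in C(X,I)$. Properties (1) and (2) then follow at once by applying Theorem~\ref{th.char-sugeno-i} to $c$ on the compactum $\beta X$, since $\phi\le\psi$ on $X$ is equivalent to $\beta\phi\le\beta\psi$ on $\beta X$ and $\beta(\alpha\land\phi)=\alpha\land\beta\phi$, $\beta(\alpha\lor\phi)=\alpha\lor\beta\phi$. For property (3), observe that a monotonically decreasing net $(\phi_\alpha)$ in $C(X,I)$ with pointwise lower bound $\inf_\alpha\phi_\alpha(x)\le\psi(x)$ on $X$ gives, by uniqueness of Stone--\v Cech extensions, a monotonically decreasing net $(\beta\phi_\alpha)$ with $\inf_\alpha\beta\phi_\alpha(x)\le\beta\psi(x)$ for all $x\in X$. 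This is exactly the hypothesis of condition (2) in the lemma characterizing $M^*X$, which yields $\inf_\alpha c(\beta\phi_\alpha)\le c(\beta\psi)$, i.e.\ $\inf_\alpha i(\phi_\alpha)\le i(\psi)$.

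For the converse, given $i:C(X,I)\to I$ satisfying (1)--(3), I define $\tilde\imath:C(\beta X,I)\to I$ by $\tilde\imath(\psi)=i(\psi|_X)$. The map $\psi\mapsto\psi|_X$ is a bijection $C(\beta X,I)\to C(X,I)$ that commutes with the lattice operations and preserves pointwise order, so $\tilde\imath$ inherits properties (1) and (2) from $i$. By Theorem~\ref{th.char-sugeno-i} there is then a unique capacity $c\in M\beta X$ with $\tilde\imath(\psi)=\jint_{\beta X}\psi\land dc$ for all $\psi\in C(\beta X,I)$. To see that $c\in M^*X$, I verify condition (2) of the characterizing lemma: if $(\psi_\gamma)$ is a monotonically decreasing net in $C(\beta X,I)$ with $\inf_\gamma\psi_\gamma(x)\le\theta(x)$ for all $x\in X$, then $(\psi_\gamma|_X)$ is a decreasing net in $C(X,I)$ bounded below by $\theta|_X$ on $X$, so property (3) of $i$ gives $\inf_\gamma i(\psi_\gamma|_X)\le i(\theta|_X)$, i.e.\ $\inf_\gamma c(\psi_\gamma)\le c(\theta)$. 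Hence $c\in M^*X$ and corresponds to a (unique) $\tau$-smooth capacity $\check c\in\check MX$, which by the preceding proposition satisfies $\check c(\phi)=c(\beta\phi)=\tilde\imath(\beta\phi)=i(\phi)$.

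Uniqueness of the representing capacity is inherited from the compact case: any other $c'\in\check MX$ with the same Sugeno integral as $i$ would determine, via $c'(\beta\phi)=i(\phi)=\tilde\imath(\beta\phi)$ and the density of $\{\beta\phi:\phi\in C(X,I)\}=C(\beta X,I)$, the same functional $\tilde\imath$ on $C(\beta X,I)$, hence the same element of $M\beta X$ by Theorem~\ref{th.char-sugeno-i}, and therefore the same capacity on $X$. The only delicate point is the translation of (3) into the $\tau$-smoothness condition defining $M^*X$, but this is immediate once one notes that decreasing nets in $C(X,I)$ and in $C(\beta X,I)$ are in perfect correspondence under $\phi\leftrightarrow\beta\phi$, so the condition ``$\inf_\alpha\phi_\alpha\le\psi$ on $X$'' is intrinsic to what happens on $X$.
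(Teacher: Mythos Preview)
Your proof is correct and follows exactly the route the paper intends: the paper itself gives no separate argument, remarking only that the theorem ``immediately'' follows from the identification $\check MX\cong M^*X\subset M\beta X$, the formula $\check c(\phi)=c(\beta\phi)$, the lemma characterizing $M^*X$ via decreasing nets, and Theorem~\ref{th.char-sugeno-i}. You have simply spelled out that implication in full, and each step is sound.
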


Condition (3) is superfluous for a compact space $X$, but cannot be
omitted for noncompact spaces. E.g. the functional, which sends
each $\phi\in C(\BBR,I)$ to $\sup\phi$, has properties (1),(2), but
fails to satisfy (3).

The following statement is an immediate corollary of an analogous
theorem for the compact case.
\begin{stat}
The topology on $\check MX$ can be equivalently determined by a
subbase which consists of all sets of the form
\begin{gather*}
O_+(U,\alpha)=\{c\in \check MX\mid \text{there is }F\subcl
X,
\\
F\text{ is completely separated from }X\setminus U, c(F)>\alpha\}
\end{gather*}
for all open $U\subset X$, $\alpha\in I$, and of the form
$$
O_-(F,\alpha)=\{c\in\check MX\mid c(F)<\alpha\}
$$
for all closed $F\subset X$, $\alpha\in I$.
\end{stat}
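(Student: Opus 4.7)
The plan is to exploit the description of the topology on $\check MX$ as the weak${}^*$ topology with respect to Sugeno integrals against $C(X,I)$, established immediately before the statement via the identification $\check c(\phi)=c(\beta\phi)$. It then suffices to show that the proposed subbase $\{O_+(U,\alpha)\}\cup\{O_-(F,\alpha)\}$ generates the same topology as the weak${}^*$ subbase $\{\{c:c(\phi)<\alpha\}\}\cup\{\{c:c(\phi)>\alpha\}\}$, with $\phi\in C(X,I)$ and $\alpha\in I$.

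First I would show each proposed subbase element is weak${}^*$-open. For $O_-(F,\alpha)$, one checks $\check c(F)=\inf\{\check c(\phi):\phi\in C(X,I),\phi\ge\chi_F\}$ by transferring to $\beta X$, taking $H=\Cl_{\beta X}F$, and applying the analogous compact-case identity to $\tilde c(H)$, then noting that extensions and restrictions set up a bijection between the relevant function classes. Hence $O_-(F,\alpha)=\bigcup_{\phi\ge\chi_F}\{c:c(\phi)<\alpha\}$. For $O_+(U,\alpha)$, given $c_0\in O_+(U,\alpha)$ with witness $F$ and separating $\phi:X\to I$ (so $\phi|_F\equiv 1$, $\phi|_{X\setminus U}\equiv 0$), the Sugeno integral satisfies $c_0(\phi)\ge c_0(F)>\alpha$; picking $\alpha'\in(\alpha,c_0(\phi))$ one verifies that any $c$ with $c(\phi)>\alpha'$ admits (from the definition of Sugeno integral as a supremum) some $\beta>\alpha'$ with $c(\{\phi\ge\beta\})>\alpha'>\alpha$, and the level set $\{\phi\ge\beta\}$ is completely separated from $X\setminus U$ via $((\phi-\beta/2)/(\beta/2))\land 1\lor 0$, so $c\in O_+(U,\alpha)$.

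Conversely, I would show each weak${}^*$ subbase element is open in the $O_\pm$-topology. The formula for the Sugeno integral immediately yields $c(\phi)<\alpha\iff c(\phi^{-1}([\alpha,1]))<\alpha$, so $\{c:c(\phi)<\alpha\}=O_-(\phi^{-1}([\alpha,1]),\alpha)$. For $\{c:c(\phi)>\alpha\}$, given $c_0$ in this set pick $\beta>\alpha$ with $c_0(\{\phi\ge\beta\})>\alpha$, choose $\beta'\in(\alpha,\beta)$, and set $U=\{\phi>\beta'\}$; the set $\{\phi\ge\beta\}$ is completely separated from $X\setminus U$, so $c_0\in O_+(U,\alpha)$. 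Moreover any $c\in O_+(U,\alpha)$ has a witness $F'$ with $F'\subset U$ (complete separation forces disjointness from $X\setminus U$) and $c(F')>\alpha$, yielding $c(\phi)\ge\beta'\land c(\{\phi\ge\beta'\})\ge\beta'\land c(F')>\alpha$, whence $O_+(U,\alpha)\subset\{c:c(\phi)>\alpha\}$.

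The main subtlety lies in the strict inequalities: on the $O_+$ side one must insert intermediate values ($\alpha'$, $\beta'$) to bridge the Sugeno-integral level-set description and the complete-separation requirement, and then check that the separating function can be taken with values in $[0,1]$. The remainder is a routine comparison of neighborhood bases.
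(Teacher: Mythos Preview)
Your argument is correct, but it takes a different route from the paper. The paper does not give an explicit proof; it simply remarks that the statement is ``an immediate corollary of an analogous theorem for the compact case.'' The intended argument is to pull back the standard subbase on $M\beta X$ (consisting of $O_+(V,\alpha)$ for $V\subop\beta X$ and $O_-(H,\alpha)$ for $H\subcl\beta X$) through the defining embedding $e_MX:\check MX\hra M\beta X$, and then identify the resulting preimages with the proposed $O_\pm$ sets on $X$, using that $\check c(F)=c(\Cl_{\beta X}F)$ and that complete separation in $X$ corresponds exactly to containment in an open set of $\beta X$.

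Your approach instead works intrinsically with the weak${}^*$ description via Sugeno integrals, never returning to $\beta X$ once the identity $\check c(\phi)=c(\beta\phi)$ is in hand. This is longer but more self-contained, and it makes explicit the precise relationship between the Sugeno level sets and the $O_\pm$ subbase---in particular the identity $\{c:c(\phi)<\alpha\}=O_-(\phi^{-1}[\alpha,1],\alpha)$ and the sandwiching $c_0\in O_+(\{\phi>\beta'\},\alpha)\subset\{c:c(\phi)>\alpha\}$---which the paper records only in the compact case (in the proof that $\delta_\phi$ is continuous). One small point worth flagging: your claim that ``the formula for the Sugeno integral immediately yields $c(\phi)<\alpha\iff c(\phi^{-1}([\alpha,1]))<\alpha$'' is not quite immediate for the implication $\Leftarrow$; termwise the supremum defining $c(\phi)$ only gives $c(\phi)\le\alpha$. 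One needs $\tau$-smoothness to know that $\beta\mapsto c(\{\phi\ge\beta\})$ is left-continuous, so that $c(\{\phi\ge\alpha\})<\alpha$ forces $c(\{\phi\ge\beta_0\})<\alpha$ for some $\beta_0<\alpha$, and then the supremum is bounded by $\max(\beta_0,c(\{\phi\ge\beta_0\}))<\alpha$. This is available since elements of $\check MX$ are $\tau$-smooth by definition, but it deserves a sentence.
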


Like the compact case, for a continuous map $f:X\to Y$ of Tychonoff
spaces we define a map $\check Mf:\check MX\to \check MY$ by the
two following equivalent formulae: $\check Mf(c)(F)=c(f^{-1}(F))$,
with $c\in \check MX$, $F\subcl Y$ (if set functions are used), or
$\check Mf(c)(\phi)=c(\phi\circ f)$ for $c\in \check MX$, $\phi\in
C(X,I)$ (if we regard capacities as functionals). The latter
representation implies the continuity of $\check Mf$, and we obtain
a functor $\check M$ in the category $\Tych$ of Tychonoff spaces
that is an extension of the capacity functor $M$ in $\Comp$.

The map $e_MX:\check MX\to M\beta X$ coincides with $\check MiX$,
where $iX$ is the embedding $X\hra \beta X$ (we identify $\check
M\beta X$ and $M\beta X$), and the collection
$e_M=(e_MX)_{X\in\Ob\Tych}$ is a~natural transformation from the
functor $\check M$ to the functor $UM\beta$, with $U:\Comp\to
\Tych$ being the inclusion functor. Observe that $\eta_M
\beta X(X)\subset M^*X=e_MX(\check MX)$, therefore there is a
continuous restriction $\check\eta_M X=\eta_M\beta X|_{X}:X\to
\check MX$ which is a component of a natural transformation
$e_M:\uni{\Tych}\to\check M$. For all $x\in X\in\Ob\Tych$, $F\subcl
X$ the value $\check\eta_GX(x)(F)$ is equal to $1$ if $x\in F$,
otherwise is equal to $0$.

\begin{lem}
Let $X$ be a Tychonoff space. Then $\mu_M\beta X\circ
\check Me_MX(\check M^2X)\subset e_MX(\check MX)$.
\end{lem}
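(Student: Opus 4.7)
The plan is to verify that any element $c$ of the image satisfies condition~(2) of the lemma characterizing $M^*X$ (equivalently condition~(1)), working throughout with Sugeno integrals via the functional representation from Theorem~\ref{th.char-sugeno-tych} and its compact precursor. The idea is that the required $\tau$-smoothness of $c$ on $\beta X$ restricted to $X$ reduces, after two applications of condition~(3) of Theorem~\ref{th.char-sugeno-tych}, to the $\tau$-smoothness of the capacities $c'\in\check MX$ on which $\mathcal{C}$ is ``supported'' and of $\mathcal{C}$ itself.

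Concretely, fix $\mathcal{C}\in\check M^2X$ and set $c=\mu_M\beta X\circ\check Me_MX(\mathcal{C})\in M\beta X$ (this is legal since $M\beta X$ is a compactum, so $\check M(M\beta X)=M^2\beta X$). For any $\phi\in C(\beta X,I)$, the functional identity $\mu_MY(C)(\phi)=C(\delta_\phi)$ in $\Comp$ together with the definition of $\check Mf$ on functionals gives
\[
c(\phi)=\check Me_MX(\mathcal{C})(\delta_\phi)=\mathcal{C}(\delta_\phi\circ e_MX).
\]
Under the identification $e_MX(c')\leftrightarrow c'\in\check MX$, one has $\delta_\phi(e_MX(c'))=c'(\phi|_X)$ by the proposition $\check c(\phi)=c(\beta\phi)$. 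Thus
\[
c(\phi)=\mathcal{C}\bigl(c'\mapsto c'(\phi|_X)\bigr),
\]
and the map $c'\mapsto c'(\phi|_X)$ is continuous on $\check MX$ because it equals $\delta_\phi\circ e_MX$ with $\delta_\phi$ continuous on $M\beta X$ (Lemma~3.7) and $e_MX$ an embedding.

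Now take a monotonically decreasing net $(\phi_\gamma)$ in $C(\beta X,I)$ and $\psi\in C(\beta X,I)$ with $\inf_\gamma\phi_\gamma(x)\le\psi(x)$ for every $x\in X$. The restrictions $\phi_\gamma|_X$ form a monotonically decreasing net in $C(X,I)$ with $\inf_\gamma\phi_\gamma|_X\le\psi|_X$ pointwise on $X$. For every fixed $c'\in\check MX$, property~(3) of Theorem~\ref{th.char-sugeno-tych} applied to $c'$ yields
\[
\inf_\gamma c'(\phi_\gamma|_X)\le c'(\psi|_X).
\]
The functions $F_\gamma:\check MX\to I$, $F_\gamma(c')=c'(\phi_\gamma|_X)$, are continuous (as noted above), monotonically decreasing in $\gamma$ (since each $c'$ is monotonic in its argument by property~(1) of Theorem~\ref{th.char-sugeno-tych}), and the previous inequality says $\inf_\gamma F_\gamma(c')\le F(c')$ pointwise on $\check MX$, where $F(c')=c'(\psi|_X)$. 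Applying property~(3) of Theorem~\ref{th.char-sugeno-tych} this time to the capacity $\mathcal{C}\in\check M^2X$ gives
\[
\inf_\gamma c(\phi_\gamma)=\inf_\gamma\mathcal{C}(F_\gamma)\le\mathcal{C}(F)=c(\psi),
\]
which is exactly condition~(2) of the lemma characterizing $M^*X$. Hence $c\in M^*X=e_MX(\check MX)$.

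The main obstacle is purely bookkeeping: one has to reconcile the two guises of capacities (set functions vs.\ functionals), check that the intermediate function $c'\mapsto c'(\phi|_X)$ is a \emph{continuous} member of $C(\check MX,I)$ so that Theorem~\ref{th.char-sugeno-tych}(3) is legitimately applicable to $\mathcal{C}$, and verify that the identifications $\check M(M\beta X)=M^2\beta X$ and $\check c(\phi)=c(\beta\phi)$ line up with the compositions $\mu_M\beta X\circ\check Me_MX$. Once those identifications are carefully recorded, the argument is the two-step $\tau$-smoothness passage described above.
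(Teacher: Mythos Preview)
Your argument is correct, but it proceeds along a different axis than the paper's proof. The paper verifies condition~(1) of the lemma characterizing $M^*X$ directly with set functions: given $F,G\subcl\beta X$ with $F\cap X\subset G$, it observes that every $c\in M^*X$ satisfies $c(F)\le c(G)$, hence the level sets $\{c\in\check MX\mid e_MX(c)(F)\ge\alpha\}$ are nested in the corresponding sets for $G$, and then monotonicity of $C$ together with the explicit $\sup$-formula for $\mu_M\beta X$ gives the inequality $\mu_M\beta X\circ\check Me_MX(C)(F)\le\mu_M\beta X\circ\check Me_MX(C)(G)$. You instead verify the equivalent condition~(2) via the functional identity $\mu_MY(C)(\phi)=C(\delta_\phi)$ and reduce the claim to two nested applications of property~(3) of Theorem~\ref{th.char-sugeno-tych}: once for each inner capacity $c'\in\check MX$, once for the outer capacity $\mathcal{C}$. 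The paper's route is more elementary in that it uses only monotonicity and the raw definition of $\mu_M$, while yours makes transparent that the lemma is precisely ``$\tau$-smoothness iterates'', at the cost of having to check that the intermediate maps $c'\mapsto c'(\phi|_X)$ are continuous so that Theorem~\ref{th.char-sugeno-tych} applies to $\mathcal{C}$. Both are short; yours generalizes more readily to other functional representations.
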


\begin{proof}
Let $C\in \check M^2X$, and $F,G\subcl \beta X$ are such that
$F\cap X\subset G$. Then for all $c\in M^*X$ we have $c(F)\le
c(G)$, thus for each $\alpha\in I$~:
$$
\{c\in \check MX\mid e_MX(c)(F)\ge\alpha\}
\subset
\{c\in \check MX\mid e_MX(c)(G)\ge\alpha\},
$$
hence
\begin{multline*}
\check Me_MX(C)(\{c\in M\beta X\mid c(F)\ge\alpha\}))=
C(e_MX^{-1}(\{c\in M\beta X\mid c(F)\ge\alpha\}))\le
\\
C(e_MX^{-1}(\{c\in M\beta X\mid c(G)\ge\alpha\}))=
\check Me_MX(C)(\{c\in M\beta X\mid c(G)\ge\alpha\})),
\end{multline*}
thus
\begin{gather*}
\mu_M\beta X\circ \check Me_MX(C)(F)=\sup\{\alpha\land
\check Me_MX(C)(\{c\in M\beta X\mid c(F)\ge\alpha\})\le
\\
\sup\{\alpha\land
\check Me_MX(C)(\{c\in M\beta X\mid c(G)\ge\alpha\})=
\mu_M\beta X\circ \check Me_MX(C)(G),
\end{gather*}
which means that $\mu_M\beta X\circ \check Me_MX(C)\in
M^*X=e_MX(\check MX)$.
\end{proof}

For $e_MX:\check MX\to M\beta X$ is an embedding, there is a unique
map $\check\mu_M X:\check M^2X\to \check MX$ such that $\mu_M\beta
X\circ Me_MX=e_MX\circ \check\mu_M X$, and this map is continuous.
It is straightforward to verify that the collection
$\check\mu_M=(\check\mu_M X)_{X\in
\Ob\Tych}$ is a natural transformation $\check M^2\to \check M$, and
$\check \mu_M X$ can be defined directly, without involving Stone-\v
Cech compactifications, by the usual formulae~:
$$
\check\mu_M X(C)(F)=
\sup\{\alpha\land
C(\{c\in \check MX\mid c(F)\ge\alpha\}),\;C\in \check M^2X, F\subcl
X,
$$
or
$$
\check \mu_M X(C)(\phi)=C(\delta_\phi),\phi\in C(X,I),\text{ where }
\delta_\phi(c)=c(\phi)\text{ for all }c\in \check MX.
$$

\begin{theo}
The triple $\check\BBM=(\check M,\check\eta_M,\check\mu_M)$ is a monad
in $\Tych$.
\end{theo}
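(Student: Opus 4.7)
The plan is to mirror the proof of Theorem~\ref{st.gcheck-mon}. Each of the three monad laws for $\check\BBM$ will be proved by post-composing with the embedding $e_MX\colon\check MX\to M\beta X$, manipulating the resulting equality using the defining relations
\[
e_MX\circ\check\eta_M X=\eta_M\beta X\circ iX,\qquad e_MX\circ\check\mu_M X=\mu_M\beta X\circ\check M e_MX,
\]
and reducing to the corresponding already-established law of $\BBM$ on $\Comp$. Since $e_MX$ is injective, the original identity on $\check MX$ then follows at once.

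For the first unit law $\check\mu_M X\circ\check\eta_M \check MX=\uni{\check MX}$ I would compose with $e_MX$, rewrite via the definition of $\check\mu_M$, apply naturality of $\check\eta_M$ along the morphism $e_MX$ (using that $\check\eta_M$ agrees with $\eta_M$ on the compactum $M\beta X$), and close with the compact identity $\mu_M\beta X\circ\eta_M M\beta X=\uni{M\beta X}$. The second unit law $\check\mu_M X\circ\check M\check\eta_M X=\uni{\check MX}$ is handled similarly: after post-composition with $e_MX$, I distribute $\check M$ over $e_MX\circ\check\eta_M X=\eta_M\beta X\circ iX$, use that $\check M$ restricts to $M$ on morphisms between compacta (so $\check M\eta_M\beta X=M\eta_M\beta X$) and that $\check M iX=e_MX$, then invoke the compact identity $\mu_M\beta X\circ M\eta_M\beta X=\uni{M\beta X}$. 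For associativity, both $e_MX\circ\check\mu_M X\circ\check M\check\mu_M X$ and $e_MX\circ\check\mu_M X\circ\check\mu_M\check MX$ reduce, via naturality of $\check\mu_M$ along $e_MX$ and the equality $\check\mu_M M\beta X=\mu_M M\beta X$, to
\[
\mu_M\beta X\circ M\mu_M\beta X\circ\check M^2 e_MX\qquad\text{and}\qquad\mu_M\beta X\circ\mu_M M\beta X\circ\check M^2 e_MX,
\]
which coincide by associativity of $\BBM$ on $\Comp$.

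No substantive difficulty is expected; the only step requiring attention is the bookkeeping of naturality squares, and in particular the repeated use of the facts that on compacta $\check M$, $\check\eta_M$, $\check\mu_M$ agree with $M$, $\eta_M$, $\mu_M$, and that $\check MiX=e_MX$. Once these are kept straight, each of the three identities becomes a one-line consequence of the corresponding identity of $\BBM$ in $\Comp$, followed by cancellation of the monomorphism $e_MX$. As a byproduct, the coincidence of $\check M$, $\check\eta_M$, $\check\mu_M$ with $M$, $\eta_M$, $\mu_M$ on compacta shows that $\check\BBM$ is indeed a genuine extension of the capacity monad from $\Comp$ to $\Tych$.
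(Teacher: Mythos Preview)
Your proposal is correct and follows exactly the approach the paper takes: the paper states that the proof ``is a complete analogue of the proof of Proposition~\ref{st.gcheck-mon}'', which is precisely the argument by post-composition with the embedding $e_MX$, reduction to the monad laws of $\BBM$ in $\Comp$, and cancellation of the monomorphism $e_MX$ that you outline. Your bookkeeping with the naturality squares and the identifications $\check M iX=e_MX$, $\check\eta_M|_{\Comp}=\eta_M$, $\check\mu_M|_{\Comp}=\mu_M$ is exactly what is needed.
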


\emph{Proof} is a complete analogue of the proof of
Proposition~\ref{st.gcheck-mon}.

This monad is an extension of the monad $\BBM=(M,\eta_M,\mu_M)$ in
$\Comp$ in the sense that $\check MX=MX$, $\check\eta_M X=\eta_M X$
and $\check \mu_M X=\mu_M X$ for each compactum~$X$.

\begin{stat}
Let for each compact inclusion hyperspace $\CCF$ on a Tychonoff
space $X$ the set function $i_G^MX(\CCF):\exp X\cup\{\ES\}\to I$ be
defined by the formula
$$
i_G^MX(\CCF)(A)=
\begin{cases}
1, A\in\CCF,\\
0, A\notin \CCF,
\end{cases}
\;A\subcl X.
$$
Then $i_G^KX$ is an embedding $\check GX\hra\check MX$, and the
collection $i_G^K=(i_G^KX)_{X\in\Ob\Tych}$ is a morphism of monads
$\check \BBG\to \check \BBM$.
\end{stat}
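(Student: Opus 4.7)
The plan is to reduce everything to the compact-case morphism of monads $i_G^M:\BBG\to\BBM$ in $\Comp$ (available from \cite{ZN:CapFun:08}) by means of the embeddings $e_GX:\check GX\hra G\beta X$ and $e_MX:\check MX\hra M\beta X$. The crucial identity is
\[
e_MX\circ i_G^MX=i_G^M\beta X\circ e_GX,
\]
which I verify pointwise: for $\CCF\in\check GX$ and $F\subcl\beta X$, both sides equal $1$ when $F\cap X\in\CCF$ and $0$ otherwise, using $e_GX(\CCF)=\{G\subcl\beta X\mid G\cap X\in\CCF\}$ together with the recipe $e_MX(c)(F)=c(F\cap X)$. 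Before the identity is even meaningful I must confirm $i_G^MX(\CCF)\in\check MX$, which amounts to showing $i_G^M\beta X\circ e_GX(\CCF)\in M^*X$. This is immediate from upward closure: if $F\cap X\subset G$ and $F\cap X\in\CCF$, then $G\cap X$ is a nonempty closed subset of $X$ containing $F\cap X$, hence lies in $\CCF$, so the indicator capacity satisfies the required $c(F)\le c(G)$.

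That $i_G^MX$ is a topological embedding follows from the key identity, since its right-hand side is a composition of embeddings (the compact-case $i_G^M\beta X$ and $e_GX$) and $e_MX$ is itself an embedding. Naturality $\check Mf\circ i_G^MX=i_G^MY\circ\check Gf$ for continuous $f:X\to Y$ is then derived by post-composing with $e_MY$, substituting the naturality equations $e_MY\circ\check Mf=M\beta f\circ e_MX$ and $e_GY\circ\check Gf=G\beta f\circ e_GX$, applying naturality of $i_G^M$ in $\Comp$ to $\beta f$, and cancelling the embedding $e_MY$. The unit condition $i_G^MX\circ\check\eta_G X=\check\eta_M X$ collapses in the same manner to the compact-case identity $i_G^M\beta X\circ\eta_G\beta X=\eta_M\beta X$ composed with $iX:X\to\beta X$, after post-composition with the embedding $e_MX$.

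The main obstacle is the multiplication axiom
\[
i_G^MX\circ\check\mu_G X=\check\mu_M X\circ\check Mi_G^MX\circ i_G^M\check GX,
\]
which requires combining several layers of defining relations. Post-composing with $e_MX$ and using $e_GX\circ\check\mu_G X=\mu_G\beta X\circ\check Ge_GX$, the left-hand side becomes $i_G^M\beta X\circ\mu_G\beta X\circ\check Ge_GX$. For the right-hand side I apply $e_MX\circ\check\mu_M X=\mu_M\beta X\circ\check Me_MX$, then functoriality of $\check M$ together with the key identity, and then naturality (already established in the previous step) of $i_G^M$ on the morphism $e_GX:\check GX\to G\beta X$, to rewrite it as $\mu_M\beta X\circ Mi_G^M\beta X\circ i_G^M G\beta X\circ\check Ge_GX$. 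Here I silently use $\check Mi_G^M\beta X=Mi_G^M\beta X$ since $i_G^M\beta X$ is a morphism in $\Comp$. The compact-case multiplication axiom $i_G^M\beta X\circ\mu_G\beta X=\mu_M\beta X\circ Mi_G^M\beta X\circ i_G^M G\beta X$ then equates the two sides, and injectivity of $e_MX$ completes the argument.
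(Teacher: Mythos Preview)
The paper states this proposition without proof, so there is no argument to compare against directly. Your proof is correct and follows precisely the strategy the paper uses elsewhere (e.g.\ in Theorem~\ref{st.gcheck-mon} and its $\check\BBM$ analogue): push everything through the embeddings $e_GX$, $e_MX$ to the Stone--\v Cech compactification, invoke the known compact-case monad morphism $i_G^M:\BBG\to\BBM$ from \cite{ZN:CapFun:08}, and cancel the embedding. The key commutativity $e_MX\circ i_G^MX=i_G^M\beta X\circ e_GX$ is verified correctly, as is the membership $i_G^M\beta X(e_GX(\CCF))\in M^*X$ via upward closure of $\CCF$. Your handling of the multiplication axiom---unwinding $\check Me_MX\circ\check Mi_G^MX$ via functoriality, then applying naturality of $i_G^M$ at the map $e_GX$, then the compact multiplication law---is the right sequence of reductions and goes through cleanly.
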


Thus the monad $\check\BBG$ is a submonad of the monad $\check
\BBM$.

Now let
$$
M_*X=\{c\in M\beta X\mid
c(A)=
\sup\{c(F)\mid F\subset A\cap X\text{ is compact}\}
\text{ for all }A\subcl
\beta X\}.
$$

It is easy to see that $M_*X\subset M^*X$. As a corollary we obtain

\begin{stat}
A set function $c':\exp X\cup\{\ES\}\to I$ is equal to $\check c$
for some $c\in M_*X$ if and only if $c'$ is a $\tau$-smooth
capacity on $X$ and satisfies the condition $c'(A)=\sup\{c'(F)\mid
F\subset A\text{ is compact}\}$ for all $A\subcl X$ (\emph{inner
compact regularity}).
\end{stat}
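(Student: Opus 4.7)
The plan is to reduce both directions to the already-proved proposition identifying $\tau$-smooth capacities on $X$ with elements of $M^*X$, together with the observation that compact subsets $K\subset X$ remain closed in $\beta X$ (so that $\check c(K)=c(K)$ with no choice of extension needed).

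For the forward direction, take $c\in M_*X\subset M^*X$. The earlier proposition gives immediately that $\check c$ is a $\tau$-smooth capacity on $X$, so only inner compact regularity must be checked. Fix $A\subcl X$ and choose any $\tilde A\subcl\beta X$ with $\tilde A\cap X=A$ (for instance $\tilde A=\Cl_{\beta X}A$); then $\check c(A)=c(\tilde A)$. By definition of $M_*X$,
$$
c(\tilde A)=\sup\{c(K)\mid K\subset\tilde A\cap X=A,\ K\text{ compact}\}.
$$
For each compact $K\subset A$, the set $K$ is closed in $\beta X$ (compact in Hausdorff) and $K\cap X=K$, so $\check c(K)=c(K)$. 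Combining these gives $\check c(A)=\sup\{\check c(K)\mid K\subset A\text{ compact}\}$, as required.

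For the converse, suppose $c'$ is a $\tau$-smooth capacity on $X$ satisfying inner compact regularity. By the earlier proposition there is a unique $c\in M^*X$ with $\check c=c'$, so I only need to verify that this $c$ actually lies in $M_*X$. Given an arbitrary $A\subcl\beta X$, the set $A\cap X$ is closed in $X$, and because $c\in M^*X$ one has $c(A)=\check c(A\cap X)=c'(A\cap X)$. By the assumed inner compact regularity of $c'$,
$$
c'(A\cap X)=\sup\{c'(K)\mid K\subset A\cap X\text{ compact}\},
$$
and for each such $K$, the identity $\check c(K)=c(K)$ from the previous paragraph applies. Therefore $c(A)=\sup\{c(K)\mid K\subset A\cap X\text{ compact}\}$, which is exactly the condition defining $M_*X$.

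There is no real analytic obstacle here: the whole argument is bookkeeping on top of the earlier $M^*X$ proposition, and the only substantive point is the interchangeability $\check c(K)=c(K)$ for compact $K\subset X$, which is a consequence of $K$ being closed both in $X$ and in $\beta X$. In particular, no new use of $\tau$-smoothness, regularity, or integration is required beyond what is already encoded in the $M^*X$ identification.
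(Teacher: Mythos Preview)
Your proof is correct and is precisely the argument the paper has in mind: the statement is presented there as an immediate corollary of the $M^*X$ proposition together with the inclusion $M_*X\subset M^*X$, and you have simply written out the bookkeeping (including the key observation that $\check c(K)=c(K)$ for compact $K\subset X$) that the paper leaves implicit.
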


If a set function satisfies (1)--(4), we call it a~\emph{Radon
capacity}. The set of all Radon capacities on $X$ is denoted by
$\hat MX$ and regarded as a subspace of~$\check MX$. An obvious
inclusion $M\beta f(M_*X)\subset M_*Y$ for a continuous map $f:X\to
Y$ of Tychonoff spaces implies $\check Mf(\hat MX)\subset
\hat MY$. Therefore we denote the restriction of $\check Mf$
to a mapping $\hat MX\to \hat MY$ by $\hat Mf$ and obtain a
subfunctor $\hat M$ of the functor $\check M$.

\begin{que}
What are necessary and sufficient conditions for a functional
$i:C(X,I)\to I$ to have the form $i(\phi)=\jint_X\phi(x)\land
dc(x)$ for a some capacity $c\in \hat MX$?
\end{que}

Here is a \emph{necessary condition}~: for each monotonically
\emph{increasing} net $(\phi_\alpha)$ of continuous functions $X\to
I$ and a continuous function $\psi:X\to I$ such that
$\sup\limits_\alpha
\phi_\alpha(x)\ge \psi(x)$ for all $x\in X$, the inequality
$\sup\limits_\alpha i(\phi_\alpha)\ge i(\psi)$ is valid.

The problem of existence of a restriction of $\check\mu_M X$ to a
map $\hat M^2X\to \hat MX$ is still unsolved and is connected with
a similar question for inclusion hyperspaces by the following

\begin{stat}\label{caps-to-incl}
Let $X$ be a Tychonoff space. If $\check\mu_M X(\hat M^2X)\subset
\hat MX$, then $\check\mu_G X(\hat G^2X)\subset
\hat GX$.
\end{stat}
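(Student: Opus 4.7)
The plan is to transfer the conclusion from capacities to inclusion hyperspaces using the monad morphism $i_G^M:\check\BBG\to\check\BBM$ together with the hypothesis.

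First, I would establish the auxiliary fact that for any Tychonoff space $Y$, the embedding $i_G^MY:\check GY\hra\check MY$ restricts to an embedding $\hat GY\hra\hat MY$ and that conversely $i_G^MY(\CCF)\in\hat MY$ implies $\CCF\in\hat GY$. Both directions are immediate from the description of $i_G^MY(\CCF)$ as the characteristic functional $\chi_\CCF$: the defining condition of $\hat GY$ (every element of $\CCF$ contains a compact element of $\CCF$) is exactly inner compact regularity of $\chi_\CCF$ at the level $A\in\CCF$ (where $\chi_\CCF(A)=1$), while at $A\notin\CCF$ both sides vanish automatically.

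Second, given $\mathbf{F}\in\hat G^2X=\hat G(\hat GX)$, I would apply the monad morphism identity
\[ i_G^MX\circ\check\mu_G X=\check\mu_M X\circ\check M(i_G^MX)\circ i_G^M(\check GX) \]
at $\mathbf{F}$. The goal becomes showing that the inner part $\check M(i_G^MX)\circ i_G^M(\check GX)(\mathbf{F})$ lies in $\hat M^2X=\hat M(\hat MX)$. I would obtain this by factoring the computation through $\hat GX$: the auxiliary fact applied at $Y=\hat GX$ gives $i_G^M(\hat GX)(\mathbf{F})\in\hat M(\hat GX)$, and then applying $\hat M$ to the restricted embedding $i_G^MX|_{\hat GX}:\hat GX\to\hat MX$ lands in $\hat M(\hat MX)$. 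Naturality of the inclusions $\hat G\subset\check G$ and $\hat M\subset\check M$ ensures that this agrees with $\check M(i_G^MX)\circ i_G^M(\check GX)(\mathbf{F})$ viewed inside $\check M^2X$.

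Third, the hypothesis $\check\mu_M X(\hat M^2X)\subset\hat MX$ then yields $i_G^MX(\check\mu_G X(\mathbf{F}))\in\hat MX$. Since $\check\mu_G X(\mathbf{F})$ is a priori an element of $\check GX$, the converse direction of the auxiliary fact forces $\check\mu_G X(\mathbf{F})\in\hat GX$, which is what we wanted.

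The main obstacle will be the second step, specifically the bookkeeping needed to identify $i_G^M(\check GX)(\mathbf{F})$ (where $\mathbf{F}$ is viewed in $\check G(\check GX)$ via the inclusion $\check G(\iota):\check G(\hat GX)\to\check G(\check GX)$) with the pushforward of $i_G^M(\hat GX)(\mathbf{F})$ along the embedding $i_G^MX|_{\hat GX}$, and to check that the resulting capacity genuinely realizes an element of $\hat M^2X$. Once the compatibility of the subfunctors $\hat G\subset\check G$, $\hat M\subset\check M$ with the natural transformation $i_G^M$ is traced through carefully, the rest is formal.
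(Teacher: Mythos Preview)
Your argument is correct and takes a genuinely different route from the paper's. The paper argues by contrapositive: assuming $\check\mu_G X(\hat G^2X)\not\subset\hat GX$, it extracts a closed set $A\subset X$ and a compact family $\CCG\subset G_*X$ witnessing the failure, then builds by hand a capacity $C\in M_*(M_*X)$ (the $\{0,1\}$-valued capacity concentrated on the compact set $\{c_\CCB\mid\CCB\in\CCG\}$, where each $c_\CCB$ is the $\{0,1\}$-valued capacity attached to $\CCB$) and checks directly that $\mu_M\beta X(C)$ fails inner compact regularity at $\Cl_{\beta X}A$. Your proof is instead a direct, purely categorical transfer: it packages the paper's ad hoc passage $\CCB\mapsto c_\CCB$ as the monad morphism $i_G^M$, uses your auxiliary fact $i_G^MY(\hat GY)=i_G^MY(\check GY)\cap\hat MY$ together with the functoriality of $\hat M$ (both available in the paper) to push $\hat G^2X$ into $\hat M^2X$, and then invokes the monad-morphism compatibility $i_G^MX\circ\check\mu_GX=\check\mu_MX\circ\check M(i_G^MX)\circ i_G^M(\check GX)$. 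What your approach buys is a structural explanation---the statement is a formal consequence of $i_G^M$ being a submonad embedding that respects the ``hat'' subfunctors---and it avoids any explicit manipulation of $\beta X$. What the paper's approach buys is concreteness: one sees exactly which element of $\hat M^2X$ obstructs the inclusion and why, which may be more informative for the open converse question that follows the proposition.
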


\begin{proof} We will consider equivalent inclusions $\mu_M\beta
X(M_*^2X)\subset M_*X$ and $\mu_G\beta X(G_*^2X)\subset G_*X$. The
latter one means that, for each set $A\subcl X$ and compact set
$\CCG\subset G\beta X$ such that each element $F$ of any inclusion
hyperspace $\CCB\in\CCG$ contains a compactum $K\in \CCB$,
$K\subset X$, there is a compact set $H\subset A$,
$H\in\bigcap\CCG$.

Assume that $\mu_G\beta X(G_*^2X)\not\subset G_*X$, then there are
$A\subcl X$ and a compact set $\CCG\subset G_*X$ such that all
inclusion hyperspaces in $\CCG$ contain subsets of $A$, but there
are no compact subsets of $A$ in $\bigcap\CCG$. For each $\CCB\in
\CCG$ let a capacity $c_{\CCB}$ be defined as follows~:
$$
c_{\CCB}(F)=
\begin{cases}
1, F\in\CCB,\\ 0, F\notin \CCB,
\end{cases}
\quad F\subcl \beta X.
$$
It is obvious that $c_\CCB\in M_*X$, and the correspondence
$\CCB\mapsto c_\CCB$ is continuous, thus the set
$\mathrm{B}=\{c_{\CCB}\mid \CCB\in\CCG\}\subset M\beta X$ is
compact. Therefore the capacity $C\in M^2\beta X$, defined as
$$
C(\CCF)=
\begin{cases}
1, \CCF\supset\mathrm{B},\\ 0, \CCF\not\supset \mathrm{B},
\end{cases}
\quad \CCF\subcl M\beta X,
$$
is in $M_*(M_*X)$. Then $\mu_M\beta X(C)(\Cl_{\beta X}A)=1$, but
there is no compact subset $K\subset A$ such that $c_{\CCB}(K)\ne
0$ for all $\CCB\in\CCG$, therefore $\mu_M\beta X(C)(K)=0$ for all
compact $K\subset A=\Cl_{\beta X}A\cap X$, and $\mu_M\beta
X(C)\notin M_*X$.
\end{proof}

It is still unknown to the authors~:
\begin{que}
Does the converse implication hold? Do all locally compact
Hausdorff or (complete) metrizable spaces satisfy the condition of
the previous statement?
\end{que}

\section{Topological properties of the functors $\check
G$, $\hat G$, $\check M$ and $\hat M$}

Recall that a continuous map of topological spaces is \emph{proper}
if the preimage of each compact set under it is compact.
A~\emph{perfect map} is a closed continuous map such that the
preimage of each point is compact. Any perfect map is
proper~\cite{En:GenTop:77}.

>From now on all maps in this section are considered continuous, and
all spaces are Tychonoff if otherwise not specified.

\begin{rem}
We have already seen that properties of the functors $\check M$ and
$\hat M$ are ``parallel'' to properties of the functors $\check G$
and $\hat G$. Therefore in this section we present only
formulations and proofs of statements for $\check M$ and $\hat M$.
\textbf{All} of them are valid also for $\check G$ and $\hat G$,
and it is an easy exercise to simplify the proofs for capacities to
obtain proofs for compact inclusion hyperspaces.
\end{rem}

\begin{stat}
Functors $\check M$ and $\hat M$ preserves the class of injective
maps.
\end{stat}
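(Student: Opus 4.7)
The plan is to prove injectivity of $\check Mf$ and $\hat Mf$ by locating, for each pair of distinct capacities $c_1,c_2$ on $X$, a closed (or compact) test set in $X$ whose image under $f$ remains closed in $Y$, at which the corresponding images under $\check Mf$ or $\hat Mf$ will differ. The Radon functor $\hat M$ admits a particularly clean argument via inner compact regularity, which I would treat first.

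For $\hat M$: given distinct $c_1,c_2\in\hat M X$, choose $A\subcl X$ on which they differ, say $c_1(A)<\alpha<c_2(A)$ for some $\alpha\in I$. By inner compact regularity (property~(4)) applied to $c_2$, pick a compact $K\subset A$ with $c_2(K)>\alpha$; monotonicity of $c_1$ gives $c_1(K)\le c_1(A)<\alpha$, so $c_1(K)\ne c_2(K)$. Continuity of $f$ makes $f(K)$ compact, hence closed in the Hausdorff space $Y$; and injectivity of $f$ gives $f^{-1}(f(K))=K$. Consequently
$$
\hat Mf(c_i)(f(K))=c_i\bigl(f^{-1}(f(K))\bigr)=c_i(K),\qquad i=1,2,
$$
so $\hat Mf(c_1)\ne \hat Mf(c_2)$.

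For $\check M$: I would factor the map through the Stone-\v Cech compactification by means of the embedding $e_MX:\check MX\hookrightarrow M\beta X$ and the naturality identity $M\beta f\circ e_MX=e_MY\circ\check Mf$. Since $e_MY$ is injective, the task reduces to showing that $M\beta f$ is injective on $e_MX(\check MX)=M^*X$. Given distinct $c_1,c_2\in\check MX$ differing on some closed $A\subcl X$, the idea is to use outer regularity via zero-sets (a consequence of $\tau$-smoothness, as noted just after the definition) to approximate $A$ from outside by zero-sets $\phi^{-1}(\{0\})$ with $\phi\in C(X,I)$, and then invoke injectivity of $f$ together with the complete separation of $A$ from a selected point of $X\setminus A$ to produce a closed $B\subcl Y$ with $f^{-1}(B)=A$, yielding $\check Mf(c_i)(B)=c_i(A)$ and distinguishing the images.

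The main obstacle lies in this last step for $\check M$: for a general injective (but not necessarily embedding) $f$, the image $f(A)$ of a closed set need not be closed in $Y$, and $f^{-1}(\overline{f(A)}^{Y})$ may strictly exceed $A$. The delicate point is therefore to extract the witness $B$ from the structure of zero-set approximations, using $\tau$-smoothness to certify that the infimum of $c_i$ along a decreasing net of pullbacks of zero-sets from $Y$ actually realizes $c_i(A)$; this is where the injectivity of $f$ is exploited, since complete separation in $X$ provided by regularity can be transported through $f$ on a point-by-point basis even when $f$ is not a topological embedding.
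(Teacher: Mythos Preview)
Your argument for $\hat M$ is correct, and it is in fact cleaner than the paper's approach: the paper first argues that $\check Mf$ is injective (taking $B=\Cl_Y f(A)$ and asserting $c(f^{-1}(B))=c(A)$) and then deduces injectivity of the restriction $\hat Mf$. Your direct use of inner compact regularity avoids that detour entirely.

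For $\check M$, however, the obstacle you flag in your final paragraph is not a technicality to be overcome---it is fatal, and the paper's own proof shares exactly this gap. The equality $c(f^{-1}(\Cl_Y f(A)))=c(A)$ is unjustified because $f^{-1}(\Cl_Y f(A))$ can strictly contain $A$, and no amount of $\tau$-smoothness repairs it: the claim for $\check M$ is actually false. Take $X=[0,2\pi)$, $Y=S^1$, $f(t)=e^{it}$ (a continuous bijection of Tychonoff spaces), and set, for closed nonempty $A\subset X$,
\[
c_1(A)=\begin{cases}1,& A\supset[\pi,2\pi),\\0,&\text{otherwise},\end{cases}
\qquad
c_2(A)=\begin{cases}1,& A\supset\{0\}\cup[\pi,2\pi),\\0,&\text{otherwise}.\end{cases}
\]
Both are $\tau$-smooth capacities on $X$ (they are the images under $i_G^MX$ of the compact inclusion hyperspaces $\{A:A\supset[\pi,2\pi)\}$ and $\{A:A\supset\{0\}\cup[\pi,2\pi)\}$), and they differ on $[\pi,2\pi)$. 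Yet for any closed $B\subset S^1$ one has $f^{-1}(B)\supset[\pi,2\pi)$ iff $B$ contains $f([\pi,2\pi))$, hence (being closed) the entire closed lower semicircle; since that semicircle already contains $f(0)=1$, this is exactly the condition $f^{-1}(B)\supset\{0\}\cup[\pi,2\pi)$. Thus $c_1(f^{-1}(B))=c_2(f^{-1}(B))$ for every closed $B\subset Y$, so $\check Mf(c_1)=\check Mf(c_2)$. Your program of ``transporting complete separation through $f$ point by point'' therefore cannot be completed; the $\hat M$ half is the part of the statement that survives, and your proof of it is the correct one.
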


\begin{proof}
Let $f:X\to Y$ be injective. If $c,c'\in \check MX$ and $A\subcl X$
are such that $c(A)\ne c'(A)$, then $B=\Cl f(A)\in\subcl Y$, and
$\check Mf(c)(B)=c(f^{-1}(B))=c(A)\ne c'(A)=c'(f^{-1}(B))=\check
Mf(c)(B)$, hence $\check Mf(c)\ne \check Mf(c')$, and $\check Mf$
is injective, as well as its restriction $\hat Mf$.
\end{proof}

\begin{stat}
Functors $\check M$ and $\hat M$ preserve the class of closed
embeddings.
\end{stat}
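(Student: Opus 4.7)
The plan is to reduce without loss of generality to the case where $X$ is a closed subspace of $Y$ and $f:X\hookrightarrow Y$ is the inclusion (any closed embedding factors through its image). By the preceding proposition $\check Mf$ is injective, and by functoriality it is continuous. Two properties remain to be established: (a) the image $\check Mf(\check MX)$ is closed in $\check MY$, and (b) $\check Mf$ is a topological embedding onto this image.

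For (a), passing to the Stone-\v Cech compactifications via $e_MY:\check MY\hookrightarrow M\beta Y$ and identifying $\check MY$ with $M^*Y\subseteq M\beta Y$, I claim
\[
e_MY(\check Mf(\check MX))=M\beta f(M\beta X)\cap M^*Y.
\]
The inclusion ``$\subseteq$'' is immediate, since $\check Mf$ is the restriction of $M\beta f$ to $M^*X$ and a direct computation shows $M\beta f(M^*X)\subseteq M^*Y$. For the reverse inclusion, given $c'\in M\beta f(M\beta X)\cap M^*Y$, define a set function $c_0$ on $\exp X\cup\{\ES\}$ by $c_0(A)=\check{c'}(A)$ for each closed $A\subseteq X$ (which is also closed in $Y$, since $X$ is closed in $Y$). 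A routine check shows that $c_0$ is a $\tau$-smooth regular capacity on $X$: outer neighborhoods in $X$ arise from those in $Y$ by intersection with $X$, and $c_0(X)=\check{c'}(X)=c'(\Cl_{\beta Y}X)=1$ because $\beta f(\beta X)\subseteq\Cl_{\beta Y}X$ forces $(\beta f)^{-1}(\Cl_{\beta Y}X)=\beta X$. To verify $\check Mf(c_0)=c'$ it suffices to show $\check{c'}(B)=\check{c'}(B\cap X)$ for every closed $B\subseteq Y$. This follows by combining (i) the $M^*Y$-property of $c'$ applied to the pair $\Cl_{\beta Y}(B)\cap\Cl_{\beta Y}(X)$ and $\Cl_{\beta Y}(B\cap X)$ (both having trace $B\cap X$ on $Y$) with (ii) the fact that $c'=M\beta f(c)$ for some $c\in M\beta X$ forces $c'(A')=c'(A'\cap\Cl_{\beta Y}X)$ for every closed $A'\subseteq\beta Y$. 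Because $M\beta f(M\beta X)$ is the continuous image of a compactum in a Hausdorff space, it is closed in $M\beta Y$, so $\check Mf(\check MX)$ is closed in $\check MY$.

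For (b), I verify that each subbasic open of $\check MX$ is the preimage under $\check Mf$ of a subbasic open of $\check MY$. For $O_-(F,\alpha)\subseteq\check MX$ with $F$ closed in $X$ (hence in $Y$), we have $\check Mf^{-1}(O_-(F,\alpha))=O_-(F,\alpha)$ since $\check Mf(c)(F)=c(f^{-1}(F))=c(F)$. For $O_+(U,\alpha)\subseteq\check MX$ with $U$ open in $X$, choose $V$ open in $Y$ with $V\cap X=U$ and claim $\check Mf^{-1}(O_+(V,\alpha))=O_+(U,\alpha)$. The ``$\subseteq$'' direction restricts the separating function from $Y$ to $X$. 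For ``$\supseteq$'', a witness $F\subseteq X$ for $c\in O_+(U,\alpha)$, together with a separating function $\phi:X\to I$ satisfying $\phi|_F\equiv 1$, $\phi|_{X\setminus V}\equiv 0$, and $c(F)>\alpha$, lifts via the Stone-\v Cech extension $\beta\phi:\beta X\to I$ to disjoint closed sets $\Cl_{\beta X}F$ and $\Cl_{\beta X}(X\setminus V)$ in the compactum $\beta X$; transferring via $\beta f$ and using the $M^*$-structure of the target, I construct a closed $G\subseteq Y$ completely separated from $Y\setminus V$ in $Y$ with $\check Mf(c)(G)=c(G\cap X)\ge c(F)>\alpha$, establishing $\check Mf(c)\in O_+(V,\alpha)$.

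The main obstacle is the ``$\supseteq$'' step in (b): a closed subspace of a Tychonoff space need not be $C^*$-embedded, so the separating function on $X$ may fail to extend continuously to $Y$. The Stone-\v Cech detour bypasses this by converting complete separation in $X$ into disjointness of closures in the compactum $\beta X$ (automatic for separated sets), and the $M^*$-condition allows the resulting capacity estimate to descend to the required subbasic open in $Y$.
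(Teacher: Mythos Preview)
Your part (a) is correct and, in fact, proves slightly less than the paper does (you show the image equals $M\beta f(M\beta X)\cap M^*Y$, while the paper shows the stronger $(M\beta f)^{-1}(M^*Y)=M^*X$), but what you prove suffices for closedness of the image.

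The gap is in part (b). Your claim that $\check Mf^{-1}(O_+(V,\alpha))=O_+(U,\alpha)$ whenever $V\cap X=U$ is \emph{false} in general. Take a Tychonoff non-normal space $Y$ with disjoint closed sets $A,B$ that are not completely separated; put $X=A\cup B$ (closed in $Y$), $U=A$ (clopen in $X$), and let $c\in\check MX$ be the $\{0,1\}$-valued capacity with $c(F)=1\iff F\supseteq A$. Then $c\in O_+(U,1/2)$, witnessed by $F=A$. But for \emph{any} open $V\subset Y$ with $V\cap X=A$ we have $B\subset Y\setminus V$, and $\check Mf(c)\in O_+(V,1/2)$ would require a closed $G\subset Y$ with $G\supseteq A$ that is completely separated from $Y\setminus V\supseteq B$ --- impossible by choice of $A,B$. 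So $\check Mf(c)\notin O_+(V,1/2)$, and the ``$\supseteq$'' inclusion fails. Your ``Stone--\v Cech detour'' does not repair this: transferring $\Cl_{\beta X}F$ along $\beta f$ gives a compact set $L\subset\beta Y$ with the right capacity value, but $L$ need not avoid $\Cl_{\beta Y}(Y\setminus V)$, and invoking the $M^*Y$-property only lets you replace $L$ by $\Cl_{\beta Y}(L\cap Y)=\Cl_{\beta Y}F$, which has the same defect.

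The paper's argument sidesteps this entirely. Using that a closed embedding is perfect and hence $\beta f(\beta X\setminus X)\subset\beta Y\setminus Y$, it shows directly that $c\in M\beta X\setminus M^*X$ implies $M\beta f(c)\notin M^*Y$; thus $(M\beta f)^{-1}(M^*Y)=M^*X$, so $M\beta f|_{M^*X}$ is the restriction of a map of compacta to a full preimage, hence perfect, hence closed. Together with injectivity this gives a closed embedding in one stroke, with no need to match subbasic opens. You could salvage your proof by replacing (b) with this argument; note that your explicit construction of $c_0$ in (a) then becomes unnecessary, since once $(M\beta f)^{-1}(M^*Y)=M^*X$ is known, closedness of the image is automatic.
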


\begin{proof}
Let a map $f:X\to Y$ be a closed embedding (thus a perfect map),
then for the Stone-\v Cech compactification $\beta f:\beta X\to
\beta Y$ the inclusion $\beta f(\beta X\setminus X)\subset \beta
Y\setminus Y$ is valid~\cite{En:GenTop:77}. We know that $M\beta
X(M^*X)\subset M^*Y$, $M\beta X(M_*X)\subset M_*Y$.

Let $c\in M\beta X\setminus M^*X$, then there are $F,G\subcl M\beta
X$ such that $F\cap X\subset G$, but $c(F)>c(G)$. Then $f(F)$ and
$f(G)$ are closed in $\beta Y$, and $f(F)\setminus f(G)\subset
f(\beta X\setminus X)\subset \beta Y\setminus Y$.

The sets $F'=f^{-1}(f(F))$ and $G'=f^{-1}(f(G))$ are closed in
$\beta X$ and satisfy $F'\cap X=F\cap X$, $G'\cap X=G\cap X$, thus
$c(F')=M\beta f(c)(f(F))>c(G')=M\beta f(c)(f(G))$, which implies
$M\beta f(c)\notin M^*Y$. Thus $(M\beta f)^{-1}(M^*Y)=M^*X$, and
the restriction $M\beta f|_{M^*X}:M^*X\to M^*Y$ is perfect,
therefore closed. It is obvious that this restriction is injective,
thus is an embedding. For the maps $M\beta f|_{M^*X}$ and $\check
Mf$ are homeomorphic, the same holds for the latter map.

Now let $c\in M\beta X\setminus M_*X$, i.e. there is $F\subcl
\beta X$ such that $c(F)>\sup \{c(K)\mid K\subset F\cap X\text{ is
compact}\}$. The compact set $F'=\beta f(F)$ is closed in $\Cl
f(X)\subset \beta Y$. Observe that $F=(\beta f)^{-1}(F')$ and
obtain:
\begin{gather*}
\sup \{M\beta f(c)(L)\mid L\subset F'\cap Y\text{ is compact}\}
=
\\
\sup \{c((\beta f)^{-1}(L))\mid L\subset F'\cap Y\text{ is
compact}\}
\le
\\
\sup \{c(K)\mid K\subset F\cap X\text{ is compact}\}
<c(F)=M\beta f(c)(F'),
\end{gather*}
and $M\beta f(c)\notin M_*X$. The rest of the proof is analogous to
the previous case.
\end{proof}

It allows for a closed subspace $X_0\subset X$ to identify $\check
MX_0$ and $\hat MX_0$ with the images of the map $\check Mi$ and
$\hat Mi$, with $i:X_0\hookrightarrow\to X$ being the embedding.

We say that a functor $F$ in $\Tych$ \emph{preserves intersections
(of closed sets)} if for any space $X$ and a family
$(i_\alpha:X_\alpha\hookrightarrow X)$ of (closed) embeddings the
equality $\bigcap_\alpha FX_\alpha=FX_0$ holds, i.e.
$\bigcap_\alpha Fi_\alpha(X_\alpha)=Fi_0(X_0)$, where $i_0$ is the
embedding of $X_0=\bigcap_\alpha X_\alpha$ into~$X$. This notion is
usually used for functors which preserve (closed) embeddings,
therefore we verify that:

\begin{stat}
Functors $\check M$ and $\hat M$ preserve intersections of closed
sets.
\end{stat}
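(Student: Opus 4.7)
The plan is to reduce to the compact case via the embedding $e_MX\colon\check MX\hra M\beta X$, in parallel with the preceding proof that closed embeddings are preserved. The inclusion $\check Mi_0(\check MX_0)\subset\bigcap_\alpha\check Mi_\alpha(\check MX_\alpha)$ is automatic from functoriality, since $i_0$ factors through each $i_\alpha$; so all the work is in the reverse direction. Throughout, I would work inside $M\beta X$ and at the very end pull a preimage back to $\check MX_0$.

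First I would use the fact, established in the previous proof, that $\beta i_\alpha(\beta X_\alpha\setminus X_\alpha)\subset\beta X\setminus X$, so that $\beta i_\alpha$ realises $\beta X_\alpha$ as $\Cl_{\beta X}X_\alpha$. Under this identification the image $e_MX(\check Mi_\alpha(\check MX_\alpha))$ is precisely the set of $c\in M^*X$ which are ``supported'' on $\Cl_{\beta X}X_\alpha$ in the sense $c(F)=c(F\cap\Cl_{\beta X}X_\alpha)$ for every $F\subcl\beta X$. A capacity $c$ lying in the intersection therefore satisfies this for each $\alpha$, and by iteration for every finite intersection $\bigcap_{\alpha\in\Lambda}\Cl_{\beta X}X_\alpha$ as well.

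Next, set $Y=\bigcap_\alpha\Cl_{\beta X}X_\alpha$. The downward directed family of finite intersections has meet $F\cap Y$ inside the compactum $\beta X$, and since every capacity on a compactum is $\tau$-smooth (by upper semicontinuity plus compactness), passing to the infimum yields $c(F)=c(F\cap Y)$. To replace $Y$ by $\Cl_{\beta X}X_0$ I would use $Y\cap X=\bigcap_\alpha X_\alpha=X_0=\Cl_{\beta X}X_0\cap X$ together with the defining property of $M^*X$ applied to $A=F\cap Y$ and $B=F\cap\Cl_{\beta X}X_0$: since $A\cap X\subset B$, we get $c(A)\le c(B)$, and monotonicity then forces $c(F)=c(F\cap\Cl_{\beta X}X_0)$. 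Thus $c$ is supported on $\Cl_{\beta X}X_0=\beta i_0(\beta X_0)$; the resulting preimage $c_0\in M\beta X_0$ inherits $M^*$-compatibility from $c$ by pushing forward pairs $A,B\subcl\beta X_0$ through $\beta i_0$, so $c_0\in\check MX_0$ and $c=\check Mi_0(c_0)$.

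For $\hat M$, if in addition $c\in M_*X$, I would verify $c_0\in M_*X_0$ by transferring inner compact regularity through $\beta i_0$: for any $A\subcl\beta X_0$, every compact $K\subset\beta i_0(A)\cap X$ automatically lies in $\Cl_{\beta X}X_0\cap X=X_0$, so the supremum formula defining $M_*X$ translates verbatim to $M_*X_0$. The main obstacle I expect is the bridge between $Y$ and $\Cl_{\beta X}X_0$: for non-normal $X$ these closed subsets of $\beta X$ can genuinely differ, and it is precisely the $M^*$-compatibility encoding the Tychonoff regularity of $c$ that collapses this gap once $\tau$-smoothness has reduced the support to $Y$.
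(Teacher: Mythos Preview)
Your argument is correct, but it takes a noticeably longer path than the paper's.  The paper works entirely inside $X$: once one knows that $c\in\check MX_\alpha$ amounts to $c(F)=c(F\cap X_\alpha)$ for every $F\subcl X$, iteration gives $c(F)=c(F\cap X_{\alpha_1}\cap\dots\cap X_{\alpha_k})$ for every finite subfamily, and then the $\tau$-smoothness that is \emph{built into the definition of $\check MX$} applies directly to the decreasing net of sets $F\cap X_{\alpha_1}\cap\dots\cap X_{\alpha_k}$ to yield $c(F)=c(F\cap X_0)$.  No passage to $\beta X$ is needed, and the whole discrepancy between $\bigcap_\alpha\Cl_{\beta X}X_\alpha$ and $\Cl_{\beta X}X_0$ simply never arises.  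For $\hat M$ the paper just observes $\hat MX_0=\hat MX\cap\check MX_0$, which is the same content as your transfer argument but in one line.

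Your route---pushing into $M\beta X$, using upper semicontinuity on the compactum to reach $Y=\bigcap_\alpha\Cl_{\beta X}X_\alpha$, and then invoking the $M^*X$ condition to collapse $Y$ down to $\Cl_{\beta X}X_0$---works and is a good illustration of how the $M^*$-compatibility encodes exactly the ``trace on $X$'' information.  The trade-off is that you are re-deriving a $\tau$-smoothness conclusion on $X$ by detouring through $\beta X$, when the paper's definition of $\check MX$ already hands you that conclusion for free.  Your approach does buy something: it makes explicit how the result sits relative to the closed-embedding machinery just established, and it would generalise to any subfunctor of $UM\beta$ cut out by a ``depends only on traces in $X$'' condition.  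But for the statement at hand, the intrinsic argument is both shorter and conceptually cleaner.
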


\begin{proof}
Let $c\in \check MX$ and closed subspaces $X_\alpha\subset X$,
$\alpha\in A$, are such that $c\in \check MX_\alpha$ for all
$\alpha\in A$. Let $2_f^A$ be the set of all non-empty finite
subsets of $A$. It is a directed poset when ordered by inclusion.
For all $F\subcl X$ and $\{\alpha_1,\dots,\alpha_k\}\in 2_f^A$ we
have $c(F)=c(F\cap X_{\alpha_1}\cap\dots\cap X_{\alpha_k})$. The
monotonically decreasing net $(F\cap X_{\alpha_1}\cap\dots\cap
X_{\alpha_k})_{\{\alpha_1,\dots,\alpha_k\}\in 2_f^A}$ converges to
$F\cap X_0$, with $X_0=\bigcap_{\alpha\in A}X_\alpha$. Thus
$c(F)=c(F\cap X_0)$, which implies $c\in \check MX_0$.

The statement for $\hat M$ is obtained as a corollary due to the
following observation: if $X_0\subset X$ is a closed subspace, then
$\hat MX_0=\hat MX\cap \check MX_0$.
\end{proof}

Therefore for each element $c\in \check MX$ there is a least closed
subspace $X_0\subset X$ such that $c\in\check MX_0$. It is called
the \emph{support} of $c$ and denoted $\supp c$.

It is unknown to the author whether the functor $\check M$ preserve
finite or countable intersections.

\begin{stat}
Functor $\hat M$ preserves countable intersections.
\end{stat}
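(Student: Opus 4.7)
The plan is to split the required equality $\bigcap_n\hat Mi_n(\hat MX_n)=\hat Mi_0(\hat MX_0)$ into its two inclusions. The direction $\supset$ is routine from the factorization $i_0=i_n\circ j_n$ with $j_n:X_0\hra X_n$, which gives $\hat Mi_0(c_0)=\hat Mi_n(\hat Mj_n(c_0))$ for every $c_0\in\hat MX_0$. All of the content lies in the reverse inclusion.

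For that direction, I take $c\in\bigcap_n\hat Mi_n(\hat MX_n)$ and fix Radon capacities $c_n\in\hat MX_n$ with $\hat Mi_n(c_n)=c$. The first move is to unwrap what membership in $\hat Mi_n(\hat MX_n)$ says about $c$: since $K\subset X_n$ is compact in $X_n$ iff it is compact in $X$, and $c(K)=c_n(K)$ on such compacta, the inner compact regularity of each $c_n$ combined with $c(F)=c_n(F\cap X_n)$ yields
$$
c(F)=\sup\{c(K)\mid K\subset F\cap X_n,\ K\text{ compact in }X\}\qquad(\star)
$$
for every $F\subcl X$ and every $n$.

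The central step is a diagonal construction exploiting $(\star)$ for all $n$ simultaneously. Given $F\subcl X$ and $\eps>0$, I build inductively a decreasing sequence of compacta $K_1\supset K_2\supset\cdots$ with $K_n\subset F\cap X_1\cap\cdots\cap X_n$ and $c(K_n)>c(F)-\eps(1-2^{-n})$, by applying $(\star)$ to the closed set $K_n$ at index $n+1$ to produce $K_{n+1}\subset K_n\cap X_{n+1}$ with $c(K_{n+1})>c(K_n)-\eps\cdot 2^{-(n+1)}$. Setting $K=\bigcap_n K_n$ gives a compactum contained in $F\cap X_0$, and $\tau$-smoothness of $c$ along the decreasing sequence $(K_n)$ delivers $c(K)\ge\inf_n c(K_n)\ge c(F)-\eps$. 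Consequently $c(F)=\sup\{c(K)\mid K\subset F\cap X_0,\ K\text{ compact}\}$ for every $F\subcl X$; in particular the value $c(F)$ depends only on the trace $F\cap X_0$.

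The remainder is bookkeeping. I define $c_0(A)=c(\Cl_XA)$ for $A\subcl X_0$, a choice forced by the trace-independence just proved. Monotonicity, normalization, and inner compact regularity of $c_0$ are direct from the corresponding properties of $c$ (using that compacta of $X$ lying in $X_0$ are exactly the compacta of $X_0$). For $\tau$-smoothness of $c_0$, a decreasing net $(A_\alpha)$ in $X_0$ is pushed to $(\Cl_XA_\alpha)$ in $X$, $\tau$-smoothness of $c$ is invoked, and the resulting intersection is observed to have the same trace $A$ on $X_0$ as $\Cl_XA$. Finally $\hat Mi_0(c_0)(F)=c_0(F\cap X_0)=c(\Cl_X(F\cap X_0))=c(F)$ by trace-independence once more. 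The hard part is the diagonal construction together with the $\tau$-smoothness pass to $K=\bigcap_nK_n$: the argument absorbs a geometric reserve $\eps\cdot 2^{-n}$ at each step, and it is precisely the convergence of $\sum 2^{-n}$ that prevents the scheme from extending to uncountable families.
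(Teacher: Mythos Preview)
Your proof is correct and follows essentially the same approach as the paper: the heart of both arguments is the diagonal construction of a decreasing sequence of compacta $K_n\subset K_{n-1}\cap X_n$ with geometric reserve $\eps\cdot 2^{-n}$, followed by passing to $K=\bigcap_n K_n$ via $\tau$-smoothness. The paper's proof is considerably terser---it simply asserts that the resulting inner compact regularity with respect to $X_0$ means ``$c\in\hat MX_0$'', relying on the earlier identification of $\hat MX_0$ with a subset of $\hat MX$---whereas you explicitly construct $c_0$ on $X_0$ and verify normalization, $\tau$-smoothness, inner compact regularity, and $\hat Mi_0(c_0)=c$; this extra bookkeeping is sound and arguably fills in what the paper leaves implicit, but it does not constitute a genuinely different route.
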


\begin{proof}
Let $c\in \hat MX$ belong to all $\hat MX_n$ for a sequence of
subspaces $X_n\subset X$, $n=1,2,\dots$. If $A\subcl F$, $\eps>0$,
then there is a compactum $K_1\subset A\cap X_1$ such that
$c(K_1)>c(A)-\eps/2$. Then choose a compactum $K_2\subset K_1\cap
X_2$ such that $c(K_2)>c(K_1)-\eps/4$, \dots, a compactum
$K_n\subset K_{n-1}\cap X_n$ such that
$c(K_n)>c(K_{n-1})-\eps/2^n$, etc. The intersection
$K=\bigcap_{n=1}^\infty K_n$ is a compact subset of $A\cap X_0$,
$X_0=\bigcap_{n=1}^\infty X_n$, and $c(K)>C(A)-\eps$. Thus $\sup
\{c(K)\mid K\subset A\cap X_0\text{ is compact}\}=c(A)$ for all $A\subcl
X$, i.e. $c\in \hat MX_0$.
\end{proof}

It is easy to show that $\check M$ and $\hat M$ do not preserve
uncountable intersections.

We say that a functor $F$ in $\Tych$ (or in $\Comp$)
\emph{preserves preimages} if for each continuous map $f:X\to Y$
and a closed subspace $Y_0\subset Y$ the inclusion $Ff(b)\in FY_0$
for $b\in FX$ implies $b\in F(f^{-1}(Y_0))$, or, more formally,
$Ff(b)\in Fj(FY_0)$ implies $b\in Fi(F(f^{-1}(Y_0)))$, where
$i:f^{-1}(Y_0)\hra X$ and $j:Y_0\hra Y$ are the embeddings.

\begin{stat}
Functors $\check M$ and $\hat M$ do not preserve preimages.
\end{stat}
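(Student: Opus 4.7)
The plan is to exhibit an explicit finite discrete counterexample. Since every capacity on a finite discrete space is automatically $\tau$-smooth and Radon, a single such example will simultaneously defeat preimage-preservation for both $\check M$ and $\hat M$.

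First I would take the compact discrete spaces $X=\{a,b,c\}$ and $Y=\{p,q\}$, the map $f\colon X\to Y$ sending $a,b\mapsto p$ and $c\mapsto q$, and the closed subspace $Y_0=\{p\}\subset Y$, so that $f^{-1}(Y_0)=\{a,b\}$. On $X$ I would define $\nu\colon\exp X\cup\{\ES\}\to I$ by $\nu(A)=1$ precisely when $A\supseteq\{a,b\}$ or $A\supseteq\{a,c\}$, and $\nu(A)=0$ otherwise. A direct check shows $\nu$ is monotonic with $\nu(\ES)=0$ and $\nu(X)=1$, and upper semicontinuity, $\tau$-smoothness and inner compact regularity are all automatic since $X$ is finite and discrete; hence $\nu\in\hat MX\subset\check MX$.

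Next I would compute $\check Mf(\nu)(B)=\nu(f^{-1}(B))$ on the four closed subsets of $Y$, obtaining $\check Mf(\nu)(\{p\})=\nu(\{a,b\})=1=\check Mf(\nu)(Y)$ and $\check Mf(\nu)(\{q\})=\nu(\{c\})=0$. This coincides with $\check Mj(\mu)$, where $j\colon Y_0\hookrightarrow Y$ is the inclusion and $\mu$ is the unique capacity on the singleton $Y_0$; hence $\check Mf(\nu)\in\check Mj(\check MY_0)$. It then remains to verify $\nu\notin\check Mi(\check M(f^{-1}(Y_0)))$ with $i\colon\{a,b\}\hookrightarrow X$: any element of this image must satisfy $c(A)=c(A\cap\{a,b\})$ for every closed $A\subset X$, but this fails for $\nu$ at $A=\{a,c\}$, since $\nu(\{a,c\})=1$ while $\nu(\{a,c\}\cap\{a,b\})=\nu(\{a\})=0$.

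The whole argument is essentially a direct verification, so there is no serious obstacle; the only subtle point is keeping track of the characterisation of the subsets $\check Mj(\check MY_0)\subset\check MY$ and $\check Mi(\check M(f^{-1}(Y_0)))\subset\check MX$ as consisting of those capacities ``concentrated'' on $Y_0$ and $f^{-1}(Y_0)$ respectively, in the sense that their set-function values depend only on intersections with those subspaces. This characterisation is a consequence of the preservation of closed embeddings established earlier in the section, which justifies identifying $\check M$ of a closed subspace with its image in $\check M$ of the ambient space.
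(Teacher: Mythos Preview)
Your proposal is correct, and the approach is essentially the same as the paper's: both exploit that $\check M$ and $\hat M$ coincide with the capacity functor $M$ on compacta, so any compact counterexample to preimage-preservation for $M$ settles the matter for both extensions simultaneously. The only difference is one of explicitness --- the paper merely cites \cite{ZN:CapFun:08} for the fact that $M:\Comp\to\Comp$ fails to preserve preimages, whereas you construct a concrete finite example and verify all the details by hand. Your version is therefore more self-contained, but not a genuinely different route.
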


It is sufficient to recall that the capacity functor
$M:\Comp\to\Comp$, being the restriction of the two functors in
question, does not preserve preimages~\cite{ZN:CapFun:08}.

\begin{stat}
Let $f:X\to Y$ is a continuous map such that $f(X)$ is dense in
$Y$. Then $\check Mf(\check MX)$ is dense in $\check MY$, and $\hat
Mf(\hat MX)$ is dense in $\hat MY$.
\end{stat}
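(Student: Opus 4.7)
The plan is to reduce the statement to the compact case via the Stone-\v{C}ech compactification.

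I will first show that $\beta f:\beta X\to\beta Y$ is surjective: since $f(X)$ is dense in $Y$, hence in $\beta Y$, and $\beta f(\beta X)$ is compact (thus closed in $\beta Y$) containing $\beta f(X)=f(X)$, one has $\beta f(\beta X)=\beta Y$. Next, the functor $M:\Comp\to\Comp$ preserves surjections: for a surjection $g:K\to L$ of compacta and $c\in ML$, the formula $c'(A):=c(g(A))$ for $A\subcl K$ defines a capacity $c'\in MK$ (upper semicontinuity uses closedness of $g$ between compacta) satisfying $Mg(c')=c$. Thus $M\beta f:M\beta X\to M\beta Y$ is surjective. By naturality of $e_M$, I have the commutative square
\begin{equation*}
\begin{CD}
\check MX @>{e_MX}>> M\beta X\\
@V{\check Mf}VV @VV{M\beta f}V\\
\check MY @>{e_MY}>> M\beta Y
\end{CD}
\end{equation*}
whose horizontal arrows are embeddings onto $M^*X$ and $M^*Y$ respectively; density of $\check Mf(\check MX)$ in $\check MY$ is therefore equivalent to density of $M\beta f(M^*X)$ in $M^*Y$ with the subspace topology from $M\beta Y$.

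For this density, given $c\in M^*Y$ and a basic weak$^*$-neighborhood of $c$ in $M\beta Y$ determined by finitely many $\phi_1,\dots,\phi_n\in C(\beta Y,I)$ and $\eps>0$, surjectivity of $M\beta f$ yields $c_0\in M\beta X$ with $M\beta f(c_0)=c$; the problem reduces to approximating $c_0$ by some $c''\in M^*X$ in the weak$^*$ topology on $M\beta X$, whence $M\beta f(c'')$ lies in the given neighborhood by continuity. I construct such $c''$ using the Sugeno-integral representation of $c_0$ together with the inner and outer regularity of $c_0$ on the compactum $\beta X$ and the density of $X$ in $\beta X$: the level sets $\{\phi_i\circ\beta f\ge\alpha\}$ governing the values $c_0(\phi_i\circ\beta f)=c(\phi_i)$ are approximated from inside by closed sets meeting $X$, from which a $\tau$-smooth capacity $c''$ on $X$ is built so that $c''(\phi_i\circ f)$ lies within $\eps$ of $c(\phi_i)$ for each $i$. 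Arranging the construction so that $c''$ has compact support in $X$ places $c''\in M_*X=\hat MX$, simultaneously settling the case of $\hat M$.

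The main obstacle is the weak$^*$ approximation of $c_0\in M\beta X$ by $c''\in M^*X$ uniformly on all test functions $\phi_i\circ\beta f$: the approximating $c''$ is constrained to be $\tau$-smooth on $X$ (the defining property of $M^*X$), and its Sugeno integrals must simultaneously approximate those of $c_0$. Delicately balancing the regularity of $c_0$ on $\beta X$ against the density of $X$ in $\beta X$ is the technical heart of the argument; the analogous statements for $\check G$ and $\hat G$ follow by specializing the same construction, as noted in the remark at the start of this section.
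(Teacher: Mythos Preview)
Your reduction via the commutative square is correct, and so is the observation that $M\beta f$ is surjective. But the step you call ``the technical heart of the argument'' --- the actual construction of $c''\in M^*X$ approximating a given $c_0\in M\beta X$ --- is never carried out; you only gesture at level sets and regularity. As written this is an outline, not a proof. Note moreover that what you need at that point is precisely that $M^*X$ is dense in $M\beta X$, i.e.\ that $\check MiX(\check MX)$ is dense in $\check M(\beta X)$; this is the very statement under consideration, specialized to the dense embedding $iX:X\hookrightarrow\beta X$. So the detour through $\beta f$ and surjectivity of $M\beta f$ buys nothing: you have reduced the general case to a special case that still has to be proved from scratch.

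The paper bypasses all of this with a two-line argument. Let $M_\omega X=\bigcup\{MK\mid K\subset X\text{ finite}\}$ be the set of capacities with finite support. Then $M_\omega X\subset\hat MX\subset\check MX$, and $\check Mf$ maps $M_\omega X$ onto $M_\omega(f(X))$. Since $f(X)$ is dense in $Y$, the set $M_\omega(f(X))$ is dense in both $\hat MY$ and $\check MY$. This handles $\check M$ and $\hat M$ simultaneously, and incidentally shows what the ``$c''$ with compact support in $X$'' you were looking for should have been all along: a capacity with \emph{finite} support.
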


\begin{proof}
Let $M_\omega X$ be the set of all capacities on $X$ with finite
support, i.e.
$$
M_\omega X=\bigcup\{MK\mid K\subset X\text{ is finite}\}
$$
Then $M_\omega X\subset \hat MX\subset\check MX$, $\check
Mf(M_\omega X)=M_\omega (f(X))$, and the latter set is dense in
both $\hat MY$ and $\check MY$.
\end{proof}

\section{Subgraphs of capacities on Tychonoff space and fuzzy integrals}

In \cite{ZN:CapFun:08} for each capacity $c$ on a compactum $X$ its
\emph{subgraph} was defined as follows~:
$$
\sub c=\{(F,\alpha)\in \exp X\times I\mid \alpha\le c(F)\}.
$$
Given the subgraph $\sub c$, each capacity $c$ is uniquely
restored~: $c(F)=\max\{\alpha\in I\mid (F,\alpha)\in\sub c\}$ for
each $F\in\exp X$.

Moreover, the map $\sub$ is an embedding $MX\hra \exp(\exp X\times
I)$. Its image consists of all sets $S\subset \exp X\times I$ such
that~\cite{ZN:CapFun:08} the following conditions are satisfied for
all closed nonempty subsets $F$, $G$ of $X$ and all
$\alpha,\beta\in I$~:
\begin{enumerate}
\item
if $(F,\alpha)\in S$, $\alpha\ge \beta$, then $(F,\beta)\in S$;
\item
if $(F,\alpha),(G,\beta)\in S$, then $(F\cup G,\alpha\lor\beta)\in
S$;
\item
$S\supset \exp X\times\{0\}\cup \{X\}\times I$;
\item
$S$ is closed.
\end{enumerate}
The topology on the subspace $\sub(MX)\subset \exp(\exp X\times I)$
can be equivalently determined by the subbase which consists of all
sets of the form
$$
V_+(U,\alpha)=\{S\in \sub(MX)\mid \text{there is }(F,\beta)\in S,
F\subset U, \beta>\alpha\}
$$
for all open $U\subset X$, $\alpha\in I$, and of the form
$$
V_-(F,\alpha)=\{S\in \sub(MX)\mid \beta<\alpha
\text{ for all }(F,\beta)\in S\}
$$
for all closed $F\subset X$, $\alpha\in I$.

Let the subgraph of a~$\tau$-smooth capacity $c$ on a Tychonoff
space $X$ be defined by the same formula at the beginning of the
section. Consider the intersection $\sub c\cap (\exp
X\times\{\alpha\})$. It is equal to $S_\alpha(c)\times \{\alpha\}$,
with $S_\alpha(c)=\{F\in\exp X\mid c(F)\ge\alpha\}$. The latter set
is called the $\alpha$-\emph{section}~\cite{ZN:CapFun:08} of the
capacity $c$ and is a~compact inclusion hyperspace for each
$\alpha>0$. Of course, $S_0(c)=\exp X$ is not compact if $X$ is not
compact. If $0\le\alpha<\beta\le 1$, then $S_\alpha(c)\supset
S_\beta$, and $S_\beta(c)=\bigcup_{0\le\alpha<\beta}S_\alpha(c)$.

We present necessary and sufficient conditions for a set
$S\subset\exp X\times I$ to be the subgraph of some capacity $c\in
\hat MX$.

\begin{stat}
Let $X$ be a Tychonoff space. A set $S\subset\exp X\times I$ is a
subgraph of a~$\tau$-smooth capacity on $X$ if and only if the
following conditions are satisfied for all closed nonempty subsets
$F$, $G$ of $X$ and all $\alpha,\beta\in I$~:
\begin{enumerate}
\item
if $(F,\alpha)\in S$, $\alpha\ge \beta$, then $(F,\beta)\in S$;
\item
if $(F,\alpha),(G,\beta)\in S$, then $(F\cup G,\alpha\lor\beta)\in
S$;
\item
$S\supset \exp X\times\{0\}\cup \{X\}\times I$;
\item
$S\cap(\exp X\times [\gamma;1])$ is compact in $\expl X\times I$
for all $\gamma\in (0;1]$.
\end{enumerate}
Such $S$ is closed in $\exp X\times I$.
\end{stat}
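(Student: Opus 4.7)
The plan is to prove both directions, with the compactness assertion (4) being the chief technical obstacle.

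For necessity, write $S=\sub c$; items (1)--(3) are immediate from monotonicity of $c$ and from $c(\ES)=0$, $c(X)=1$. For (4), the key observation is that for every $\alpha>0$ the $\alpha$-section $S_\alpha(c)=\{F\in\exp X:c(F)\ge\alpha\}$ is a compact inclusion hyperspace in $\expl X$: closure under supersets is monotonicity, and closure under decreasing intersections is precisely $\tau$-smoothness, so the earlier lemma characterising compact inclusion hyperspaces applies. Hence $T':=S\cap(\exp X\times[\gamma,1])\subset S_\gamma(c)\times[\gamma,1]$. Because $\expl X$ is non-Hausdorff, compactness of $T'$ cannot be deduced from closedness alone; I would therefore route through the Stone-\v Cech compactification, identifying $c$ with its lift in $M^*X\subset M\beta X$. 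Given a net $(F_i,\alpha_i)$ in $T'$, pass to a subnet along which $\alpha_i\to\alpha_*\in[\gamma,1]$ and, using compactness of $\exp\beta X$ in the Vietoris topology, also $\Cl_{\beta X}F_i\to G_*$. Upper semicontinuity of $c$ on $\exp\beta X$ yields $c(G_*)\ge\alpha_*\ge\gamma>0$, while the defining property of $M^*X$ applied with the closed set $\ES\subset\beta X$ forces $G_*\cap X\ne\ES$ (otherwise $c(G_*)\le c(\ES)=0$). Setting $F_*:=G_*\cap X\in\exp X$ gives $c(F_*)=c(G_*)\ge\alpha_*$, so $(F_*,\alpha_*)\in T'$; density of $F_i$ in $\Cl_{\beta X}F_i$ combined with the criterion ``$F_i\to F_*$ in $\expl X$ iff $F_*\subset\liminf F_i$'' shows that the subnet converges to $(F_*,\alpha_*)$ in $\expl X\times I$. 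This is the main obstacle, being the only step in which the noncompact, non-Hausdorff setting genuinely bites.

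For sufficiency, set $c(\ES)=0$ and $c(F):=\sup\{\alpha:(F,\alpha)\in S\}$ for $F\in\exp X$. The supremum is attained: for $c(F)>0$, pick $\alpha_n\uparrow c(F)$ with $(F,\alpha_n)\in S$ and apply (4) to the net $(F,\alpha_n)$ in the compact set $S\cap(\exp X\times[\alpha_1,1])$; a subnet-limit $(F^*,c(F))\in S$ exists with $F^*\subset F$ (constant first coordinate converges in $\expl X$ to any of its subsets), and (2) applied to $(F^*,c(F))$ together with $(F,0)\in S$ from (3) yields $(F,c(F))\in S$. Axioms $c(X)=1$ and monotonicity follow from (3) and (2)+(3). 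For $\tau$-smoothness, let $(F_\beta)$ decrease with $\bigcap F_\beta\subset G$; for any $\gamma''<\inf c(F_\beta)$ the net $(F_\beta,\gamma'')$ lies in the compact set from (4), and any subnet-limit $(F^*,\gamma'')\in S$ satisfies $F^*\subset\liminf F_\beta=\bigcap F_\beta\subset G$, so (2)+(3) give $(G,\gamma'')\in S$, i.e.\ $c(G)\ge\gamma''$; letting $\gamma''\uparrow\inf c(F_\beta)$ completes the verification. Finally $\sub c=S$ follows from (1) and attainment of the supremum.

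For the closedness of $S$ in $\exp X\times I$ with the paper's default topology on $\exp X$, regularity of the $\tau$-smooth capacity $c$ is enough: given $(F_0,\alpha_0)\notin S$, pick $\alpha'\in(c(F_0),\alpha_0)$ and use the upper-semicontinuity clause of the definition of a regular capacity to find an open $U\supset F_0$ with $F_0$ completely separated from $X\setminus U$ and $c(G)<\alpha'$ for every $G\subcl X$, $G\subset U$. The subbasic open neighbourhood $\langle U\rangle\times(\alpha',1]$ of $(F_0,\alpha_0)$ in $\exp X\times I$ is then disjoint from $S$, establishing closedness.
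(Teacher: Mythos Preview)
Your argument is correct, and it differs from the paper's in every nontrivial step.

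For necessity of~(4), the paper works intrinsically: it observes that each section $S_\alpha(c)$ is a compact inclusion hyperspace, then shows that any cover of $S\cap(\exp X\times[\gamma,1])$ by sets $U_i^-\times(a_i,b_i)$ has a finite subcover by first covering each slice $S_\alpha(c)\times\{\alpha\}$ finitely, then using that $S_a(c)\searrow S_\alpha(c)$ as $a\nearrow\alpha$ to thicken to an interval, and finally using compactness of $[\gamma,1]$. You instead lift $c$ to $M^*X\subset M\beta X$, use net-compactness of $\exp\beta X\times I$, and descend via $G_*\cap X$; the density argument for convergence in $\expl X$ is the correct bridge back. Your route is cleaner in that it avoids the cover bookkeeping, at the price of invoking the Stone--\v Cech machinery.

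For sufficiency, the paper first shows $S_\beta=\bigcap_{0<\alpha<\beta}S_\alpha$ from~(4) (via a contradiction with an open cover lacking a finite subcover), deduces closedness of $S$ directly from this section structure, and only then defines $c$. You bypass the section analysis entirely, extracting everything---attainment of the supremum, monotonicity, $\tau$-smoothness---from net-compactness of $S\cap(\exp X\times[\gamma,1])$ together with the algebraic conditions (1)--(3). This is a genuinely different decomposition: your $\tau$-smoothness argument (subnet-limit $F^*\subset\bigcap F_\beta$, then (2)+(3)) is more direct than routing through the section identity.

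For closedness of $S$, the paper derives it from (1)--(4) alone (via the section identity and closedness of compact inclusion hyperspaces in $\exp X$), whereas you prove it only after constructing $c$, using outer regularity. Both are valid; the paper's ordering makes it explicit that closedness follows from the axioms without reference to any capacity, but once the full equivalence is in hand this distinction is immaterial.
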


\begin{proof}
Let $c\in\check MX$ and $S=\sub c$. It is easy to see that $S$
satisfies (1)--(3). To show that $S\cap(\exp X\times [\gamma;1])$
is compact, assume that it is covered by subbase elements
$U_i^-\times (a_i;b_i)$, $U_i\subop X$, $i\in\CCI$. For any
$\alpha\in[\gamma;1]$ the intersection $S\cap
(\exp\times\{\alpha\})=S_\alpha(c)\times
\{\alpha\}$ is compact and covered by $U_i^-\times (a_i;b_i)$
for those $i\in\CCI$ that $(a_i,b_i)\ni \alpha$. Therefore there is
a finite subcover $U_{i_1}^-$, \dots, $U_{i_k}^-$ of $S_\alpha(c)$,
$\max\{a_{i_1},\dots,a_{i_k}\}<\alpha<\min\{b_{i_1},\dots,b_{i_k}\}$.
When $a\nearrow \alpha$, the compact set $S_a(c)$ decreases to
$S_\alpha(c)$, thus there is
$a\in(\max\{a_{i_1},\dots,a_{i_k}\};\alpha)$ such that
$S_a(c)\subset U_{i_1}^-\cup \dots\cup U_{i_k}^-$. If we denote
$b=\min\{b_{i_1},\dots,b_{i_k}\}$, we obtain that for each
$\alpha\in[\gamma;1]$ there is an interval $(a,b)\ni\alpha$ such
that $S\cap(\exp X\times (a,b))$ is covered by a finite number of
sets $U_i^-\times (a_i,b_i)$. For $[\gamma,1]$ is compact, we infer
that there is a finite subcover of the whole set $S\cap(\exp
X\times [\gamma;1])$, thus (4) holds.

Now let a set $S\subset \exp X\times I$ satisfy (1)--(4), and let
$S_\alpha=\pr_1(S\cap(\exp X\times I))$ for all $\alpha\in I$. By
(1) $S_\alpha\supset S_\beta$ whenever $a<\beta$. Assume
$S_\beta\ne\bigcap_{0<\alpha<\beta}S_\alpha$ for some $\beta\in
(0;1]$, i.e. there is $F\in\exp X$ such that $F\in S_\alpha$ for
all $\alpha\in (0;\beta)$, but $F\notin S_\beta$. Then the sets
$(X\setminus F)^-\times I$ and $\exp X\times [0;\alpha)$, with
$\alpha\in (0;\beta)$, form an open cover of the set $S\cap (\exp
X\times [\beta/2;1])$ for which there is no finite subcover, which
contradicts to compactness. Thus
$S_\beta=\bigcap_{0<\alpha<\beta}S_\alpha$. It implies that for
$(F,\beta)\notin S$, i.e. $F\notin S_\beta$, there is $\alpha\in
(0;\beta)$ such that $F\notin S_\alpha$. The set $S_\alpha$ is a
compact inclusion hyperspace, thus is closed in $\exp X$. Then
$(\exp X\setminus S_\alpha)\times (\alpha;1]$ is an open
neighborhood of $(F,\beta)$ which does not intersect $S$, hence $S$
is closed in $\exp X\times I$.

For each $F\in\exp X$ we put $c(F)=\max\{\alpha\mid (F,\alpha)\in
S\}$. It is straightforward to verify that $c$ is a $\tau$-smooth
capacity such that $\sub c=S$.
\end{proof}

\begin{stat}
Let $\psi:\exp X\times I\to I$ be a continuous function such that~:
\begin{enumerate}
\item
$\psi$ in antitone in the first argument and isotone in the second
one;
\item
$\psi(F,\alpha)$ uniformly converges to $0$ as $\alpha\to 0$.
\end{enumerate}
Then the correspondence $\Psi:c\mapsto \max\{\psi(F,c(F))\mid F\in
\exp X\}$ is a well defined continuous function $\check MX\to I$.
\end{stat}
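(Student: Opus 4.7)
The plan is to rewrite $\Psi(c)=\sup\{\psi(F,\beta)\mid(F,\beta)\in\sub c\}$, where the supremum over $\beta\le c(F)$ is attained at $\beta=c(F)$ by isotonicity in the second argument, and then to establish in order: (i) an auxiliary upper semicontinuity of $\psi$ with respect to the coarser product topology on $\expl X\times I$; (ii) attainment of the supremum; (iii) lower semicontinuity of $\Psi$; (iv) upper semicontinuity of $\Psi$. Steps (ii) and (iv) share the same core compactness argument, passing through $\beta X$, which is the main difficulty.

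For (i), given $(F_0,\alpha_0)$ and $\eps>0$, joint continuity of $\psi$ produces a basic neighborhood $\langle U_1,\dots,U_k\rangle\times(\alpha_0-\delta,\alpha_0+\delta)$ on which $\psi<\psi(F_0,\alpha_0)+\eps$. For $F$ in the $\tau_l$-neighborhood $\bigcap_i\langle X,U_i\rangle$ I pick $x_i\in F\cap U_i$ and observe that the finite set $F'=\{x_1,\dots,x_k\}\subset F$ lies in $\langle U_1,\dots,U_k\rangle$, since in a Tychonoff space a finite subset of an open set is completely separated from the complement; antitonicity of $\psi$ in the first argument then gives $\psi(F,\alpha)\le\psi(F',\alpha)<\psi(F_0,\alpha_0)+\eps$. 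For (iii), given $\Psi(c)>a$ with witness $F$, isotonicity in the second argument and continuity of $\psi$ supply $\beta^*<c(F)$ and a basic neighborhood $\langle U_1,\dots,U_k\rangle$ of $F$ on which $\psi(\cdot,\beta^*)>a$; with $V=U_1\cup\dots\cup U_k$, the subbase element $O_+(V,\beta^*)$ is a neighborhood of $c$, and for any $c'$ in it with a witness $F''$ (completely separated from $X\setminus V$, $c'(F'')>\beta^*$) the union $F\cup F''$ still belongs to $\langle U_1,\dots,U_k\rangle$, so $\psi(F\cup F'',\beta^*)>a$; antitonicity yields $\psi(F'',\beta^*)\ge\psi(F\cup F'',\beta^*)>a$, and isotonicity in the second argument together with $c'(F'')>\beta^*$ gives $\Psi(c')>a$.

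For (ii) and (iv) I use one unified argument. Take a net $(c_\lambda,F_\lambda)$ with $c_\lambda\to c$ in $\check MX$ (put $c_\lambda\equiv c$ in case (ii)) and $\psi(F_\lambda,c_\lambda(F_\lambda))\to s$, where $s=\Psi(c)$ in (ii) and $s=\limsup\Psi(c_\lambda)$ in (iv) (in the latter case $F_\lambda$ is chosen by applying (ii) to $c_\lambda$). Assuming $s>0$, condition (2) forces $c_\lambda(F_\lambda)\ge\gamma$ eventually for some $\gamma>0$. Using $\check MX\hookrightarrow M\beta X$ and compactness of $\exp\beta X\times I$, after passing to a subnet I may assume $\Cl_{\beta X}F_\lambda\to F^*$ in $\exp\beta X$ and $c_\lambda(F_\lambda)\to\beta^*\ge\gamma$. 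Upper semicontinuity of the pairing $(c,F)\mapsto c(F)$ on the compactum $M\beta X\times\exp\beta X$, a standard consequence of outer regularity of capacities together with normality of $\beta X$, yields $\beta^*\le\tilde c(F^*)=c(F^*\cap X)$, so in particular $F^*\cap X\ne\ES$; density of $F_\lambda$ in $\Cl_{\beta X}F_\lambda$ shows $F_\lambda\to F^*\cap X$ in $\expl X$. Applying (i) at $(F^*\cap X,\beta^*)$ gives $s\le\psi(F^*\cap X,\beta^*)\le\psi(F^*\cap X,c(F^*\cap X))\le\Psi(c)$, which is attainment in case (ii) and the inequality $\limsup\Psi(c_\lambda)\le\Psi(c)$ in case (iv). The main obstacle, as anticipated, is the non-compactness of $\exp X$, which is bridged by passing to the Stone-\v Cech compactification; condition (2) is precisely what prevents the limiting mass of the witnesses from escaping to $\beta X\setminus X$.
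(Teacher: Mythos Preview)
Your proof is correct and complete, but it follows a genuinely different route from the paper's.

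The paper works \emph{internally}: it invokes the preceding proposition, which says that $\sub c\cap(\exp X\times[\gamma;1])$ is compact in $\expl X\times I$ for every $\gamma>0$. Well-definedness then follows because $\psi$ is upper semicontinuous on $\expl X\times I$ and condition~(2) lets one restrict to such a compact slice. For upper semicontinuity of $\Psi$, the paper covers this compact slice of $\sub c$ by basic open sets coming from continuity of $\psi$, extracts a finite subcover, and from it writes down an explicit neighborhood of $c$ built out of finitely many subbase sets $O_-(\cdot,\cdot)$. No nets, no passage to $\beta X$.

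You instead work \emph{externally}: you push the maximizing witnesses $F_\lambda$ into $\exp\beta X$, use compactness there together with the (standard) upper semicontinuity of the evaluation pairing on $M\beta X\times\exp\beta X$, and then pull the limit back to $X$ via the identity $\tilde c(F^*)=c(F^*\cap X)$ characterizing $M^*X$. Your step~(i) and the observation that $\Cl_{\beta X}F_\lambda\to F^*$ forces $F_\lambda\to F^*\cap X$ in $\expl X$ are exactly what is needed to close the loop.

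What each approach buys: the paper's argument is constructive (it exhibits the neighborhood of $c$ explicitly) and is self-contained once the subgraph proposition is in hand; yours is softer and more conceptual, unifying attainment and upper semicontinuity in a single compactness-in-$\beta X$ argument, at the price of invoking the embedding $\check MX\hookrightarrow M\beta X$ and the evaluation lemma on compacta. Your treatment of lower semicontinuity is essentially the same as the paper's, though you spell out more carefully (via $F\cup F''$ and antitonicity) why every $G$ completely separated from $X\setminus V$ still satisfies $\psi(G,\beta^*)>a$.
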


\begin{proof}
Let $S=\sub c$. Observe that $\Psi$ can be equivalently defined as
$\Psi(c)=\max\{\psi(F,\alpha)\mid (F,\alpha)\in S\}$. The function
$\psi:\expl X\times I\to I$ is upper semicontinuous, and $\psi(S)$
is either $\{0\}$ or equal to $\psi(S\cap(\exp X\times
[\gamma;1]))$ for some $\gamma\in (0;1)$. Hence $\psi(S)$ is a
compact subset of $I$, therefore contains a greatest element, and
use of ``$\max$'' in the definition of $\Psi$ is legal.

Assume that $\Psi(c)<b$ for some $b\in I$. We take some $a\in
(\Psi(c);b)$. There exists $\gamma\in I$ such that
$\psi(F,\alpha)<a$ for all $\alpha\in [0;\gamma)$, $F\in\exp X$. If
$(F,\alpha)\in S$, $\alpha\ge\gamma$, then there is a neighborhood
$\CCV=\langle U_0,U_1,\dots,U_k\rangle\times (u,v)\ni (F,\alpha)$
such that $U_1\cup\dots\cup U_k$ is completely separated from
$X\setminus U_0$, and $\psi(G,\beta)<a$ for all $(G,\beta)\in\CCV$.
The inequality $\psi(G,\beta)<a$ holds also for all $(G,\beta)\in
\langle X,U_1,\dots,U_k\rangle\times [0,v)$. Thus we obtain a
cover of $S\cap (\exp X\times [\gamma;1])$ by open sets in $\expl
X\times I$, and there is a finite subcover by sets $\langle
X,U_1^l,\dots,U_{k_l}^l\rangle\times [0,v_l)$, $1\le l\le n$. We
may assume $0<v_1\le v_2\le\dots\le v_n>1$. It is routine but
straightforward to verify that $c$ is in an open neighborhood
\begin{multline*}
\CCU=
\bigcap\{
O_-(X\setminus(U^{m+1}_{j_{m+1}}\cup
\dots\cup
U^n_{j_n}),v_m)
\mid
1\le m< n,
\\
1\le j_{m+1}\le k_{m+1},
\dots,
1\le j_n    \le k_n
\},
\end{multline*}
and for each capacity $c'\in\CCU$ the set $\sub c'\cap[\gamma;1]$
is also covered by the sets
$$
\langle
X,U_1^l,\dots,U_{k_l}^l\rangle\times [0,v_l),\; 1\le l\le n,
$$
therefore
$$
\Psi(c')\le\max\{a,\max\{\psi(F,\alpha)
\mid
(F,\alpha)\in S,\alpha\ge\gamma\}\}
= a<b.
$$
Hence $\Psi$ is upper semicontinuous. To prove lower
semicontinuity, assume that $\Psi(c)>b$ for some $b\in I$. Then
there is $F\in\exp X$ such that $\psi(F,c(F))>b$. By continuity
there are open neighborhood $U\supset F$ and $\gamma\in(0;c(F))$
such that $F$ is completely separated from $X\setminus U$, and for
all $G\in\exp X$, $G$ completely separated from $X\setminus U$,
$\alpha\in I$, $\alpha>\gamma$ the inequality $\psi(G,\alpha)>b$ is
valid. Then $c\in O_+(U,\alpha)$, and for all $c'\in O_+(U,\alpha)$
we have $\Psi(c')>b$.
\end{proof}

The reason to consider such form of $\Psi$ is that not only Sugeno
integral can be represented this way (for
$\psi(F,\alpha)=\inf\{\phi(x)\mid x\in F\}\land
\alpha$), but a whole class of fuzzy integrals obtained by
replacement of ``$\land$'' by an another ``pseudomultiplication''
$\odot:I\times I\to I$~\cite{BenMeVi:MonSBFun:02}, e.g. by usual
multiplication or the operation $h(a,b)=a+b-ab$. The latter
statement provides the continuity of a fuzzy integral with respect
to a capacity on a Tychonoff space, provided ``$\odot$'' is
continuous, isotone in the both variables and uniformly converges
to $0$ as the second argument tends to $0$ (which is not the case
for the $h$ given above).


\end{document}